\newcommand \id{\mathbbm 1}
\newtheorem{theorem}{Theorem}[section]
\newtheorem{lemma}[theorem]{Lemma}
\newtheorem{proposition}[theorem]{Proposition}
\newtheorem{corollary}[theorem]{Corollary}
\newtheorem{assumption}[theorem]{Assumption}{\bf}{\it}
{\normalfont\bfseries}{\itshape}
\newtheorem{definition}[theorem]{Definition}
\theoremstyle{remark}
\newtheorem{remark}[theorem]{Remark}
\begin{document}

\title[Discretisation schemes for level sets of planar Gaussian fields]{Discretisation schemes for level sets \\ of planar Gaussian fields}
\author{D. Beliaev}
\address{Mathematical Institute, University of Oxford}
\email{belyaev@maths.ox.ac.uk}
\author{S. Muirhead}
\email{stephen.muirhead@kcl.ac.uk}
\subjclass[2010]{}
\keywords{Gaussian fields, level sets, nodal sets, discretisation}
\thanks{The authors were supported by the Engineering \& Physical Sciences Research Council (EPSRC) Fellowship EP/M002896/1 held by Dmitry Belyaev. They would like to thank Vincent Beffara, Damien Gayet, Vincent Tassion and Igor Wigman for helpful comments and suggestions.}
\date{\today}

\begin{abstract}
Smooth random Gaussian functions play an important role in mathematical physics, a main example being the random plane wave model conjectured by Berry to give a universal description of high-energy eigenfunctions of the Laplacian on generic compact manifolds. Our work is motivated by questions about the geometry of such random functions, in particular relating to the structure of their nodal and level sets.

We study four discretisation schemes that extract information about level sets of planar Gaussian fields. Each scheme recovers information up to a different level of precision, and each requires a maximum mesh-size in order to be valid with high probability. The first two schemes are generalisations and enhancements of similar schemes that have appeared in the literature \cite{BG16, MW07}; these give complete topological information about the level sets on either a local or global scale. As an application, we improve the results in \cite{BG16} on Russo-Seymour-Welsh estimates for the nodal set of positively-correlated planar Gaussian fields. The third and fourth schemes are, to the best of our knowledge, completely new. The third scheme is specific to the nodal set of the random plane wave, and provides global topological information about the nodal set up to `visible ambiguities'. The fourth scheme gives a way to approximate the mean number of excursion domains of planar Gaussian fields.\keywords{Gaussian fields, level sets, nodal sets, discretisation}
\end{abstract}

\maketitle

\tableofcontents

\section{Introduction}
\label{sec:intro}

Let $\Psi: \mathbb{R}^2 \rightarrow \mathbb{R}$ be a planar Gaussian field, that is, a random function whose finite-dimensional distributions are Gaussian random variables. We shall throughout assume that~$\Psi$ is stationary and normalised to have zero mean and unit variance at each point. This implies that $\Psi$ may be defined through its positive-definite correlation kernel $\kappa: \mathbb{R}^2 \rightarrow [-1, 1]$, satisfying $\kappa(0) = 1$ and, for each $s,t \in \mathbb{R}^2$,
\[  
\kappa(s-t) := \mathbb{E} [ \Psi(s) \Psi(t) ] . 
\]
The main objects of study in this paper are the \textit{level sets} of $\Psi$, that is, the random sets
\[  \mathcal{N}_\ell:= \left\{ s \in \mathbb{R}^2 : \Psi(s) = \ell \right\} ,  \quad \ell \in \mathbb{R}.  \]
Throughout the paper we assume that $\kappa$ is $C^6$ at the origin and $\kappa_{vv} \neq 0$ for each unit vector $v \in S^1$,  where for a function $f:\mathbb{R}^2 \rightarrow \mathbb{R}$ and a vector $v \in \mathbb{R}^2$ we use $f_v$ to denote the derivative of $f$ in the direction $v$. This ensures that, for a fixed level $\ell \in \mathbb{R}$, the level set $\mathcal{N}_\ell$ almost surely consists of a collection of disjoint simple closed curves. We refer to the components of $\mathcal{N}_\ell$ as the \textit{level lines} and the components of $\mathbb{R}^2 \setminus \mathcal{N}_\ell$ as the \textit{excursion domains}. In the special case $\ell = 0$, we refer to $\mathcal{N} := \mathcal{N}_0$ as the \textit{nodal set}, and the level lines and the excursion domains as the \textit{nodal lines} and \textit{nodal domains} respectively.

A \textit{discretisation scheme for a level set} is a method of extracting information about the level set through discrete observations of the field. To illustrate, suppose the aim is to assess, for a large fixed box, whether there exists an excursion domain that crosses the box horizontally. By choosing a suitable lattice and a sufficiently fine mesh, we would expect to be able to determine this event with high probability by sampling the value of the random field at the vertices of the lattice. 

It is not hard to see that, as the size of the box increases, a progressively finer mesh must be used in order to control the errors that arise in this procedure; this is since any event depending on a fixed scale becomes overwhelmingly likely to occur somewhere inside the box. Our main aim is to quantify the optimum \textit{scale} at which the mesh-size must decrease as the size of the box grows. As illustrated by our results, the optimum scale depends on the exact property of the level set to be extracted.

\subsection{Level sets of planar Gaussian fields}

Understanding the geometry of the level sets of planar Gaussian fields has been of great interest to mathematical physicists over the last 30 years. Local properties of the level sets, such as their total length in large boxes, are generally well-understood and easy to compute explicitly by direct integral methods. Global properties, such as the number of excursion domains, or crossing events for excursion domains, are much more subtle to understand. For a general review of properties of level sets of Gaussian random fields, see \cite{NS15}.

In regards to the global properties, a breakthrough result of Nazarov and Sodin \cite{NS09} has established that, under some weak conditions on the kernel $\kappa$, the number of nodal domains of~$\Psi$ satisfies a law of large numbers (although not considered in \cite{NS09}, an analysis of the proof shows that this holds also for the excursion domains at any level.) To make this precise, for each bounded domain~$D$ let $N(D)$ denote the number of nodal domains in $D$, i.e.\ the number of components of $D \setminus \mathcal{N}$. Then, under certain conditions on $\kappa$, there exists a constant $c_{NS} = c_{NS}(\kappa) \ge 0$ such that, for any smooth bounded domain $D$, 
\begin{align}
\label{eq:ns}
N(sD) / \text{Area}(sD) \to c_{NS}  \quad \text{almost surely and in mean},
\end{align}
where we use $sD$ to denote the scaled set $\{sx : x \in D\} \subseteq \mathbb{R}^2$. On the other hand, even in the case that $c_{NS}$ is known to exist, there is no known way to compute its value. Indeed, the value of $c_{NS}$ is not known explicitly for any (non-degenerate) Gaussian field.  

The connectivity properties of the level sets of planar Gaussian fields are also challenging to understand. One important question is whether the level sets are almost surely bounded. This was confirmed in the special case of the nodal set of positively-correlated planar Gaussian fields -- i.e.\ the case that $\ell = 0$ and $\kappa \ge 0$ -- in a result of Alexander \cite{Alex96}, but the general case remains open. A recent result of Beffara and Gayet \cite{BG16} has provided, for the first time, more explicit control over the connectivity of the nodal set of positively-correlated planar Gaussian fields, in the form of a Russo-Seymour-Welsh (RSW) estimate.

\begin{definition}[RSW estimate]
\label{def:rsw}
A random set $\mathcal{S} \subseteq \mathbb{R}^2$ \textbf{satisfies the RSW estimate} if, for each smooth bounded domain $D \subseteq \mathbb{R}^2$ and disjoint smooth boundary arcs $\gamma$ and $\gamma'$ on $\partial D$, there exists a constant $c = c(D, \gamma, \gamma') > 0$ such that, for $s > 0$ sufficiently large,
\[   \mathbb{P}\left( \text{there exists a component of } sD \cap \mathcal{S} \text{ that intersects } s\gamma \text{ and } s\gamma' \right)     > c  .\]
\end{definition}

Conditions under which the nodal set $\mathcal{N}$ (and hence also the nodal domains $\mathbb{R}^2 \setminus \mathcal{N}$) satisfies the RSW estimate in \eqref{def:rsw} were given in \cite{BG16}:

\vspace{0.2cm}
\noindent \textbf{Theorem $4.9$ of {\cite{BG16}}.} 
\textit{ Fix $\delta > 0$. Let $\Psi$ be a stationary planar Gaussian field that is almost surely~$C^4$ and such that the distribution of $\nabla \Psi (0)$ is non-degenerate. Suppose further that the correlation kernel $\kappa$ is invariant under reflection through the horizontal axis and under rotation by~$\pi/2$, that $\kappa(x) \ge 0$, and that  $\kappa(x) = o(|x|^{- \alpha - \delta})$ as $|x| \to \infty$ for $\alpha = 144 + 128 \log_{4/3}(3/2) \approx 325$. Then the nodal set $\mathcal{N}$ satisfies the RSW estimate in Definition~\ref{def:rsw}. } 
\vspace{0.2cm}

As an application of our results, we significantly weaken the necessary decay exponent under which the RSW estimate is known to hold, from $\alpha \approx 325$ to $\alpha = 16$ (see Theorem~\ref{thm:rsw} below).

A particularly important planar Gaussian field is the \textit{random plane wave}, which is the stationary, normalised Gaussian field with the correlation kernel
\begin{align}
\label{eq:rpw}
\kappa(s) :=  J_0(k |s|), 
\end{align}
where $J_0$ is the zeroth Bessel function, $k > 0$ is a scale parameter which encodes the \textit{frequency} or \textit{inverse wave-length} of the plane wave, and $|\cdot|$ denotes the standard $\ell_2$ distance. The random plane wave is almost surely smooth, and is the canonical stationary, isotropic Gaussian element in the Hilbert space of planar functions satisfying the Helmholtz equation
\begin{align}
\label{eq:helm}
\Delta \Psi + k^2 \Psi =0 .
\end{align}
The random plane wave is of particular importance because it is conjectured by Berry \cite{Berry77} to be a universal model for the high-energy eigenfunctions of the Laplacian in domains with chaotic dynamics. It is also known to be a universal scaling limit of many ensembles of eigenfunctions of the Laplacian on Riemannian manifolds. 

Solutions of equation \eqref{eq:helm} have been extensively studied, and there are many \textit{deterministic} results that can be applied to the level sets of the random plane wave. For instance, it is known that there are universal positive constants $c_1$ and $c_2$ such that the nodal set $\mathcal{N}$ intersects every disc of radius $c_1/k$, and each nodal domain contains a disc of radius $c_2/k$. 

Bogomolny and Schmit conjectured in \cite{BS02} that the nodal set of the random plane wave is well-approximated by a small perturbation of the square lattice with mesh-size $2\pi/k$, and in particular the nodal domains can be modelled by critical percolation clusters on the square lattice. Based on this idea they made a precise conjecture about the constant $c_{NS}$ in \eqref{eq:ns} for the random plane wave. Recent computer simulations \cite{BK13,Konrad12} have given very strong evidence that this prediction is slightly inaccurate. On the other hand, these simulations also provided evidence that some {\em global} properties of the nodal domains and critical percolation clusters match very well. In particular, this is true for the crossing probabilities discussed above.

\subsection{Discretisation schemes for level sets of random fields}

Discretisation schemes provide an important tool with which to study the level sets of random fields. To see why, consider that many random fields of interest are extremely rigid, for instance the random plane wave is real analytic. This means that care needs to be taken when working with them, in particular, one cannot condition on the event that the field takes certain values in any open domain without determining the whole field. Conditioning instead on values in a discrete lattice gives a possible way to circumvent this problem.

A second importance of discretisation schemes comes from numerical methods, which are extensively used to study the geometric properties of random fields. These are intrinsically discrete, and hence it is important to have good control on the errors that may arise. As an example of what can go wrong, observe that for any large box there is a positive probability that the random plane wave takes positive values at all lattice points in the box, which suggests a positive probability of having one giant nodal domain; as we just saw, this is deterministically prohibited.

Various discretisation schemes for level sets of planar random fields have previously appeared in the literature. An early work was \cite{MW07}, which developed such a scheme for general planar random fields. This is very similar to our first scheme -- assessing the validity of the discretisation on the local scale, see Theorem~\ref{thm:main1} -- and is based around controlling the event that the level set $\mathcal{N}_\ell$ crosses an edge twice (what we call a \textit{double-crossing}; see Section \ref{sec:overview}). On the other hand, \cite{MW07} does not, in general, give a bound on the maximum mesh-size for which the scheme is valid; this is given only in the case of one specific Gaussian field.

More close to our results is the discretisation scheme for planar Gaussian fields in \cite[Theorem 1.5]{BG16}. Again this is very similar to our first scheme in Theorem \ref{thm:main1}, and is also based around controlling double-crossings. Although \cite[Theorem 1.5]{BG16} does give a general bound on the maximum mesh-size, the provided bound is much more restrictive than the one we give in Theorem~\ref{thm:main1}. In particular, the scheme in \cite[Theorem 1.5]{BG16} requires that the mesh-size $\varepsilon$ decays as $\varepsilon = o(s^{-8 - \delta})$ for some $\delta > 0$, where $s$ is the scale of the domain in which the level sets are discretised. This can be compared to our scheme in Theorem \ref{thm:main1}, which is valid if $\varepsilon = o(s^{-2-\delta})$.

In \cite{BG16}, the discretisation scheme was combined with a result of Tassion \cite{Tas16} to establish that the RSW estimate in Definition \ref{def:rsw} holds for the nodal set of positively-correlated planar Gaussian fields with sufficiently fast decay of correlations. Since we improve the discretisation scheme in \cite[Theorem 1.5]{BG16}, we are able to widen the applicability of the RSW estimate; see Remarks \ref{rem:bg1} and \ref{rem:bg2} and Theorem~\ref{thm:rsw}.

\subsection{Main results}

Our main results are a series of four discretisation schemes that extract information about the level sets of planar Gaussian fields. Each scheme recovers information to a different level of precision, and each requires a different maximum mesh-size in order to be valid with high probability. We shall present the schemes in decreasing order of precision. Note that we have chosen to state each of our results as an asymptotic statement. In each case, explicit quantitative bounds on the rate of convergence may be recovered from our proofs (see Section \ref{sec:proofs}; note the decay rates of these bounds, up to constants, depend only on the mesh-size $\varepsilon$).

Before we present our results, we state the conditions on the correlation kernel under which our results hold, and give some general definitions.

\subsubsection{Conditions on the correlation kernel}
Our results will be valid under certain smoothness and non-degeneracy assumptions on the Gaussian field, expressed through conditions on $\kappa$ at the origin. These conditions are extremely mild, and will be satisfied in most applications. In particular, it is easy to check that the random plane wave satisfies these conditions.

 \begin{assumption}
\label{assumpt:degen}
Suppose that the following hold:
\begin{enumerate}
 \item (Smoothness) The correlation kernel $\kappa$ is $C^6$ at the origin;
 \item (Non-degeneracy of first derivatives) For all unit vectors $v \in S^1$, $\kappa_{v v}(0) < 0 $.
\end{enumerate}
\end{assumption}
 
 We briefly discuss the relationship between the conditions on $\kappa$ in Assumption \ref{assumpt:degen} and the resulting properties of $\Psi$; see Section \ref{sec:gaussfields} for precise statements. First, condition (1) ensures that~$\Psi$ is three-times differentiable and twice continuously differentiable almost surely. Second, since for each $s \in \mathbb{R}^2$ and $v_1, v_2 \in S^1$,
\[  \mathbb{E} \left[ \Psi_{v_1}(s) \Psi_{v_2}(s) \right] = -\kappa_{v_1 v_2}(0) , \]
condition (2) ensures that the distribution of $\nabla \Psi$ is non-degenerate. Together these conditions are sufficient to ensure that, for a fixed level $\ell \in \mathbb{R}^2$, the components of $\mathcal{N}_\ell$ are almost surely simple closed curves.

Alternatively, as is done in \cite{NS15} for instance, the conditions in Assumption \ref{assumpt:degen} could also be reformulated in terms of the \textit{spectral measure} $\rho = \rho(\kappa)$ defined by
\[ \kappa(s) = \int_{\mathbb{R}^2} e^{2 \pi i  \langle s, \mu \rangle} \, d \rho(\mu) , \quad s \in \mathbb{R}^2 .\]
For instance, $(1)$ could be the replaced by the condition that
\[ \int_{\mathbb{R}^2} |\mu|^6 \, d \rho(\mu) < \infty ,\]
and $(2)$ could be the replaced with the condition that $\rho$ is not supported on any line.

\subsubsection{Level sets and their discretisation}
We begin by introducing the discretisation of the level set~$\mathcal{N}_\ell$ on which our schemes are based. We also make precise the sense in which we shall consider the level set to be well-approximated by its discretisation.

Let $\mathcal{L} = (\mathcal{V}, \mathcal{E}, \mathcal{F})$ be a periodic lattice in $\mathbb{R}^2$, with vertex set $\mathcal{V}$, edge set $\mathcal{E}$, and face/cell set~$\mathcal{F}$; we do not assume all faces in $\mathcal{F}$ are equal, but denote by $d(\mathcal{L})$ the largest diameter of any $f \in \mathcal{F}$. Let a bounded domain $D \subseteq \mathbb{R}^2$ be called \textit{polygonal} if its boundary $\partial D$ consists of a finite number of straight edges. Let a polygonal domain $P \subseteq \mathbb{R}^2$ be called $\mathcal{L}$\textit{-compatible} if its boundary edges are the union of edges in $\mathcal{E}$. For each $\varepsilon > 0$ and bounded domain $D \subseteq \mathbb{R}^2$, let $P^{\varepsilon}(D)$ be the largest polygonal subdomain $P \subseteq D$ such that $P$ is $\varepsilon \mathcal{L}$-compatible. This is well-defined by the periodicity of~$\mathcal{L}$.

We now introduce the discretisation of the level set $\mathcal{N}_\ell$ for a fixed $\ell \in \mathbb{R}$, based on the lattice~$\mathcal{L}$ at a mesh-size $\varepsilon > 0$. Define the signed excursion domains
\begin{equation}
 \label{def:signedregions}
  \mathcal{S}^+ := \left\{ s \in \mathbb{R}^2 : \Psi(s) > \ell \right\}  \quad \text{and} \quad  \mathcal{S}^- := \left\{ s \in \mathbb{R}^2 : \Psi(s) < \ell \right\}  , 
  \end{equation}
and let $\mathcal{P}^\varepsilon = (\mathcal{P}^\varepsilon_v)_{v \in \varepsilon \mathcal{V}} \in \{+1, -1\}^{\mathcal{V}}$ be the percolation process on $\varepsilon \mathcal{V}$ induced by these regions (i.e.\ $\mathcal{P}^\varepsilon_v = 1$ if $v \in \mathcal{S}^+$, and similarly for $\mathcal{S}^-$); this is well-defined, since almost surely $\varepsilon \mathcal{V} \in \mathcal{S}^+ \cup \mathcal{S}^-$. Consider the dual lattice~$\mathcal{L}^\ast$, with vertex set $\mathcal{F}$, face set $\mathcal{V}$, and edge set~$\mathcal{E}^\ast$. Define the discretised level set $\mathcal{N}_\ell^\varepsilon \subseteq \varepsilon \mathcal{L}^\ast$ by prescribing that an edge $e^\ast \in \varepsilon \mathcal{E}^\ast$ belongs to~$\mathcal{N}_\ell^\varepsilon$ if and only if the edge $e \in \varepsilon \mathcal{E}$ that is dual to $e^\ast$ has endpoints of opposite sign in $\mathcal{P}^\varepsilon$; see Figure \ref{fig:disc} below. 

\begin{figure}[ht]
\begin{center}
\includegraphics[scale=1]{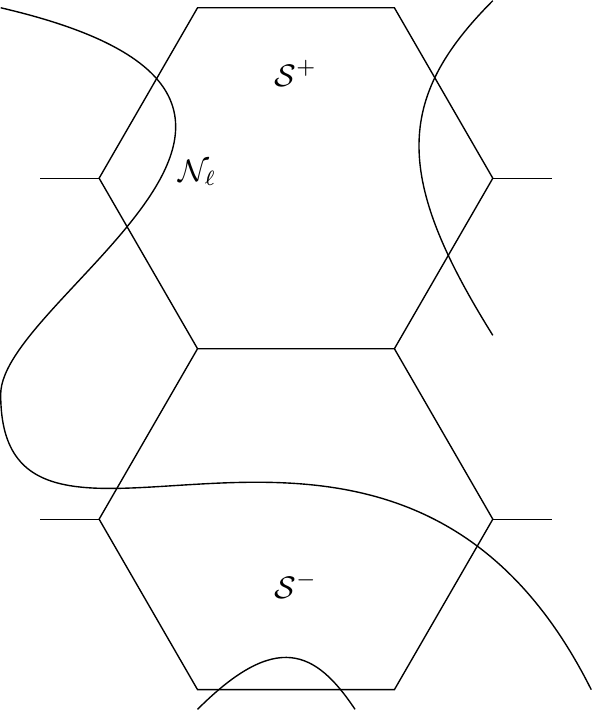}
\includegraphics[scale=1]{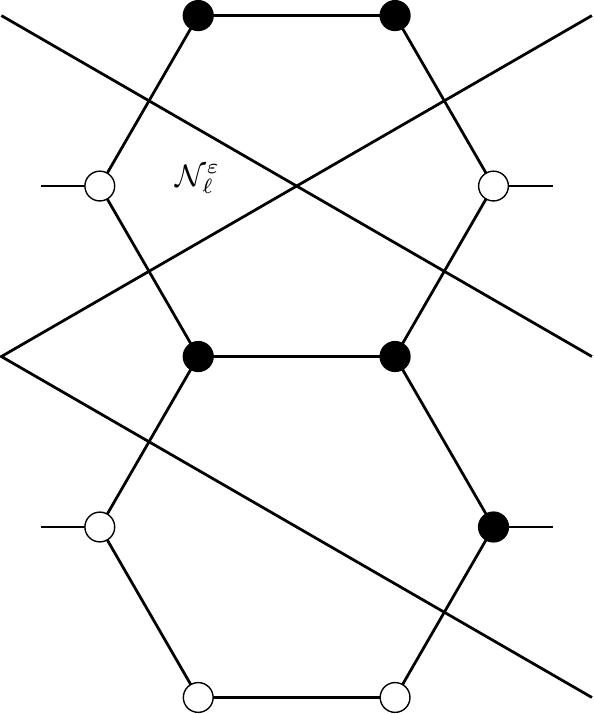}
\end{center}
\caption{An example of a level set $\mathcal{N}_\ell$ (left) and its discretisation $\mathcal{N}_\ell^\varepsilon$ (right); the black and white circles represent the induced percolation process $\mathcal{P}^\varepsilon$ (black indicating $+1$, white indicating $-1$). Here $\mathcal{L}$ is the hexagonal lattice.}
\label{fig:disc}
\end{figure}

Finally, we introduce the sense in which we consider the level set $\mathcal{N}_\ell$ to be well-approximated by its discretisation~$\mathcal{N}_\ell^\varepsilon$. For each $\varepsilon > 0$, bounded domain $D \subseteq \mathbb{R}^2$, and sets $\mathcal{M}_1, \mathcal{M}_2 \subseteq \mathbb{R}^2$ that do not intersect the vertices of~$P^\varepsilon(D)$, we say that $\mathcal{M}_1$ and $\mathcal{M}_2$ are $\varepsilon$\textit{-homeomorphic in}~$D$ if there exists a homeomorphism $h : P^\varepsilon(D)  \to P^\varepsilon(D)$ that maps $\mathcal{M}_1 \cap P^\varepsilon(D)$ onto $\mathcal{M}_2 \cap P^\varepsilon(D)$, that fixes the vertices $\mathcal{V} \cap P^\varepsilon(D)$, and such that
\[ 
|h(s) - s | <    3 d(\mathcal{L}) \, \varepsilon \quad \text{ for each } s \in \mathcal{M}_1 \cap P^\varepsilon(D) . 
\] 
Note that the constant $3 d(\mathcal{L})$ is not optimal for our results to hold, but it is convenient as an upper bound.

\begin{figure}[ht]
\begin{center}
\includegraphics[scale=1]{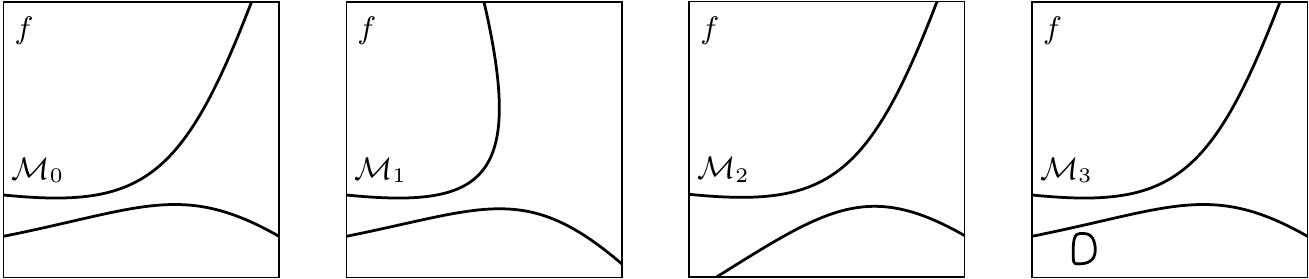}
\end{center}
\caption{A set $\mathcal{M}_0$ contained in a face $f$ of the unit square lattice, and an example of a set $\mathcal{M}_1$ that is $1$-homeomorphic to $\mathcal{M}_0$ in $f$, and sets $\mathcal{M}_2$ and~$\mathcal{M}_3$ that are not (the former since there is a vertex of $f$ lying in different connected components of $f \setminus \mathcal{M}_0$ and $f \setminus \mathcal{M}_2$, the latter since $f \setminus \mathcal{M}_3$ has more connected components than $f \setminus \mathcal{M}_0$).}
\label{fig:con}
\end{figure} 
 
Our discretisation schemes assess how well the level set $\mathcal{N}_\ell$ is approximated by its discretisation~$\mathcal{N}_\ell^\varepsilon$ inside a domain~$D$ by applying the notion of an $\varepsilon$-homeomorphism on two scales: the local scale and the global scale. On the local scale, we assess whether $\mathcal{N}_\ell$ and $\mathcal{N}_\ell^\varepsilon$ are $\varepsilon$-homeomorphic in each face $f \in P^\varepsilon(D) \cap \varepsilon \mathcal{F}$. On the global scale, we assess whether $\mathcal{N}_\ell$ and $\mathcal{N}_\ell^\varepsilon$ are $\varepsilon$-homeomorphic in~$D$. Whether either of these holds depends, in general, on the fineness of the mesh; see Figure \ref{fig:disc2}.

\begin{figure}[ht]
\begin{center}
\includegraphics[scale=1]{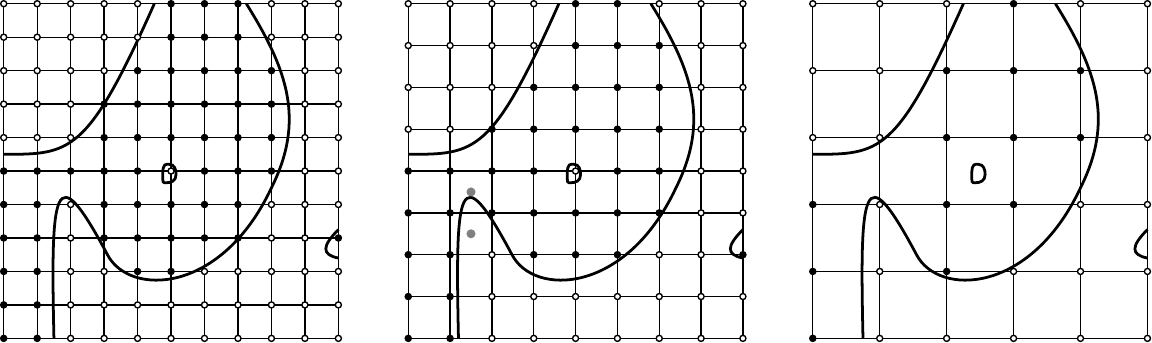}\\
\vspace{0.3cm}
\includegraphics[scale=1]{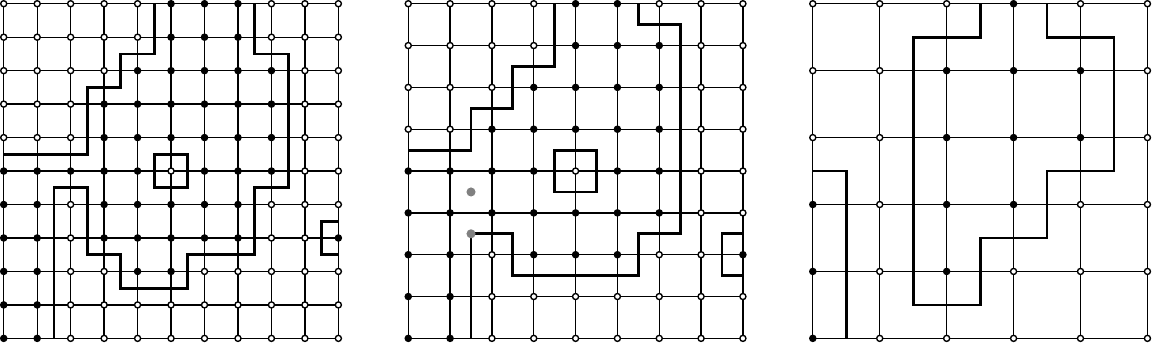}
\end{center}
\caption{Examples of a level set $\mathcal{N}_\ell$ and its discretisation $\mathcal{N}_\ell^\varepsilon$ inside a square domain $D$ for mesh-sizes $\varepsilon > 0$ in which (going left to right) (i) $\mathcal{N}_\ell$ is $\varepsilon$-homeomorphic to $\mathcal{N}_\ell^\varepsilon$ in each face $f \in D \cap \varepsilon \mathcal{L}$, (ii) $\mathcal{N}_\ell$ is $\varepsilon$-homeomorphic to $\mathcal{N}_\ell^\varepsilon$ in $D$ but not in the two faces marked with grey circles, and (iii) $\mathcal{N}_\ell$ is not $\varepsilon$-homeomorphic to $\mathcal{N}_\ell^\varepsilon$ in $D$. Here~$\mathcal{L}$ is the square lattice.}
\label{fig:disc2}
\end{figure} 

\subsubsection{Discretisation on the local scale}

The first discretisation scheme gives complete topological information about the level set $\mathcal{N}_\ell$ within each cell in the lattice; see for instance the left panel of Figure \ref{fig:disc2}. It requires the finest mesh in order to be valid with high probability. 

\begin{theorem}[Discretisation on the local scale]
\label{thm:main1}
Suppose that $\kappa$ satisfies Assumption~\ref{assumpt:degen}. Fix a level $\ell \in \mathbb{R}$ and a bounded domain~$D \subseteq \mathbb{R}^2$. Let $\varepsilon = \varepsilon_s > 0$ be a sequence such that $\varepsilon = o(s^{-2})$ as $s \to \infty$. Then, as $s \to \infty$,
\[  \mathbb{P} \left( \mathcal{N}_\ell   \text{ and } \mathcal{N}_\ell^{\varepsilon} \text{ are } \varepsilon\text{-homeomorphic in each } f \in P^{\varepsilon}(s D) \cap \varepsilon \mathcal{F} \, \right)  \to 1 . \]
\end{theorem}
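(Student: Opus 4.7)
The plan is to isolate a short list of local ``bad events'' on a single face $f$ of $\varepsilon \mathcal{L}$ whose joint non-occurrence forces the $\varepsilon$-homeomorphism in $f$, bound each by a Kac--Rice estimate, and union-bound over the $O((s/\varepsilon)^2)$ edges and faces of $\varepsilon \mathcal{L}$ in $sD$.

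The bad events I would use are: (a) some edge of $f$ is crossed twice or more by $\mathcal{N}_\ell$ (a ``double-crossing''); (b) $\partial f$ is crossed four or more times by $\mathcal{N}_\ell$; (c) a connected component of $\mathcal{N}_\ell$ is contained entirely in $f$. In their joint absence, $\partial f$ carries either $0$ or $2$ crossings (even by parity, fewer than $4$ by (b), and no two on the same edge by (a)), and the cyclic pattern of crossings matches the sign changes of $\mathcal{P}^\varepsilon$ around $\partial f$; by (c), $\mathcal{N}_\ell \cap f$ is either empty or a single embedded arc between the two boundary crossings. As $f$ is simply connected, this arc is ambient-isotopic rel vertices to the ``V'' of dual half-edges of $\mathcal{N}_\ell^\varepsilon \cap f$ through the centre of $f$, and the isotopy can be realised by a homeomorphism of $f$ that fixes vertices, maps each boundary crossing to the midpoint of its edge, and moves points by at most a bounded multiple of $\varepsilon$. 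These face-wise maps glue consistently along shared edges, producing the required global $\varepsilon$-homeomorphism on $P^\varepsilon(sD)$.

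Assumption~\ref{assumpt:degen} makes $\Psi$ almost surely $C^3$ with non-degenerate joint Gaussian law for $(\Psi, \nabla \Psi, \nabla^2 \Psi)$, so all densities and moments in the Kac--Rice framework are uniformly bounded. For event (a), parametrise an edge of length $c \varepsilon$ by arc length and set $g(t) := \Psi(\gamma(t)) - \ell$. The second factorial moment of the zero count is
\[
\mathbb{E}[N(N-1)] = \int_0^{c \varepsilon} \!\! \int_0^{c \varepsilon} \mathbb{E}\bigl[|g'(s)| \, |g'(t)| \,\big|\, g(s) = g(t) = 0 \bigr] \, p_{g(s), g(t)}(0, 0) \, ds \, dt ,
\]
and the key local estimate is that the integrand is $O(|s-t|)$ near the diagonal: the Gaussian joint density diverges only as $|s-t|^{-1}$ because $\mathrm{Var}(g(t) - g(s)) = O(|s-t|^2)$, while Rolle's theorem combined with the smoothness of $g$ forces $\mathbb{E}[|g'(s) g'(t)| \mid g(s) = g(t) = 0] = O(|s-t|^2)$. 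Integration yields $\mathbb{P}(\text{(a)}) = O(\varepsilon^3)$ per edge. Event (b) is bounded by the fourth factorial moment of crossings on $\partial f$, which an analogous Kac--Rice computation gives as $O(\varepsilon^4)$ per face. Event (c) requires a local extremum of $\Psi$ inside $f$ with critical value within $O(\varepsilon^2)$ of $\ell$ (since $\Psi$ is quadratic near a critical point), and the Kac--Rice density for such points integrates to $O(\varepsilon^2 \cdot \varepsilon^2) = O(\varepsilon^4)$ over $f$.

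Summing the three bounds over the $O((s/\varepsilon)^2)$ edges and faces in $sD$ gives a total failure probability of order $s^2 \varepsilon$, which vanishes precisely when $\varepsilon = o(s^{-2})$. The dominant contribution is from (a), and the principal obstacle is the sharp $O(|s-t|)$ pair-correlation estimate there; without the conditional-derivative cancellation, the pair correlation is not integrable near the diagonal and the union bound cannot be closed for any polynomial scaling of $\varepsilon$ in $s$. Proving this sharp bound requires a careful analysis of the degenerating conditional Gaussian law of $(g'(s), g'(t))$ given $g(s) = g(t) = 0$ as $s \to t$, which is where the $C^6$-smoothness of $\kappa$ at the origin is used to control the higher-order Taylor terms in the covariance.
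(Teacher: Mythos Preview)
Your strategy is the paper's: isolate the same three local bad events (double-crossings of an edge, four or more crossings of a face, small components contained in a face), bound each separately, union-bound over the $O((s/\varepsilon)^2)$ edges and faces in $sD$, and identify the $O(\varepsilon^3)$ double-crossing bound as the dominant term forcing $\varepsilon = o(s^{-2})$. Your treatment of (a) via the second factorial moment and of (c) via critical points with critical value within $O(\varepsilon^2)$ of $\ell$ matches the paper's Propositions~\ref{prop:kr2}/\ref{prop:condc} and~\ref{prop:krmax}/\ref{prop:consed} essentially verbatim, including the heuristic $|s-t|^{-1}\cdot|s-t|^2 = |s-t|$ for the pair-correlation integrand.

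The one substantive divergence is your handling of (b). You propose to bound it by the fourth factorial moment of crossings on $\partial f$, asserting ``an analogous Kac--Rice computation gives $O(\varepsilon^4)$''. The paper deliberately avoids a four-point Kac--Rice. Instead it first reduces to a strictly convex lattice, then observes (Lemma~\ref{lem:triple}) that a four-crossing on such a face always yields three crossing points that either form a triangle with angles in $[\varepsilon^{3/2},\mu]$ for some fixed $\mu<\pi$, or have two points within $O(\varepsilon^{3/2})$ of vertices; the first case is controlled by a constrained three-point Kac--Rice (Proposition~\ref{prop:kr3}) and the second by the two-point estimate (Proposition~\ref{prop:kr2}). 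Even so, the paper only obtains $O(\varepsilon^{4-\delta})$, with the $\delta$ arising from integrating $1/\theta^-(s)$ near the degenerate-angle set. This detour exists precisely because a direct higher-moment computation on a polygon must control nearly-collinear and corner-clustering configurations, and the paper only establishes non-degeneracy of $\Psi$ at three non-collinear points (Proposition~\ref{prop:three}), not at four. Your sketch is correct in outline, but at (b) the phrase ``analogous computation'' compresses what is in fact the hardest estimate of the proof.
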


\begin{remark}
\label{rem:bg1}
The discretisation scheme in Theorem \ref{thm:main1} is an enhancement of the discretisation scheme in \cite[Theorem 1.5]{BG16}, since it implicitly controls the event that the level set $\mathcal{N}_\ell$ crosses an edge $e \in \varepsilon \mathcal{E}$ twice (i.e.\ a double-crossing; see Section \ref{sec:overview}), and since it is valid for a significantly coarser mesh. This enhancement may be used to improve the main result in \cite{BG16}, although since we improve this result further using the next discretisation scheme, we postpone this discussion (see Remark~\ref{rem:bg2} and Theorem \ref{thm:rsw}).
\end{remark}

\subsubsection{Discretisation on the global scale}

The second discretisation scheme gives complete topological information about the level set $\mathcal{N}_\ell$ on a global, rather than local, scale. This is useful for assessing events involving crossings of level sets and excursion domains on macroscopic scales, where it is only important that $\mathcal{N}_\ell$ and $\mathcal{N}_\ell^\varepsilon$ have the same topology globally, rather than within each cell individually; see for instance the central panel of Figure \ref{fig:disc2}.

\begin{theorem}[Discretisation on the global scale]
\label{thm:main2}
Suppose that $\kappa$ satisfies Assumption \ref{assumpt:degen}. Fix $\delta > 0$, a level $\ell \in \mathbb{R}$ and a smooth bounded domain $D \subseteq \mathbb{R}^2$. Let $\varepsilon = \varepsilon_s > 0$ be a sequence such that $\varepsilon = o(s^{-1 - \delta})$ as $s \to \infty$. Then, as $s \to \infty$,
\[ \mathbb{P} \left( \mathcal{N}_\ell  \text{ and } \mathcal{N}_\ell^{\varepsilon}   \text{ are } \varepsilon \text{-homeomorphic in } sD \right) \to 1 . \]
\end{theorem}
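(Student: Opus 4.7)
The strategy is to identify, for each face $f \in P^\varepsilon(sD) \cap \varepsilon \mathcal{F}$, a local ``bad event'' $B_f$ such that (i) the absence of $B_f$ for every face implies that $\mathcal{N}_\ell$ and $\mathcal{N}_\ell^\varepsilon$ are $\varepsilon$-homeomorphic in $sD$, and (ii) a union bound gives $\sum_f \mathbb{P}(B_f) \to 0$ under the scaling $\varepsilon = o(s^{-1-\delta})$. A similar decomposition underlies Theorem~\ref{thm:main1}, so the task is to show that the weaker, global notion of $\varepsilon$-homeomorphism allows a strictly smaller family of bad events, with correspondingly smaller per-face probability.

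The critical observation is that a nodal loop of $\mathcal{N}_\ell$ contained entirely in a single face $f$ is invisible to the discretisation, but can be absorbed by an ambient isotopy supported in $f$ with displacement bounded by $d(\mathcal{L})\varepsilon$, preserving the global topological matching. Consequently, the $B_f$ need only capture configurations that \emph{propagate} a local discrepancy to the global topology, namely: (a) an edge of $f$ is crossed twice by an arc of $\mathcal{N}_\ell$ whose ``cap'' is not a small loop localised near $f$; and (b) two distinct arcs of $\mathcal{N}_\ell$ pass through $f$ in a way indistinguishable from the vertex-sign pattern. The deterministic step (i) then amounts to a face-by-face gluing argument: outside the bad events the topology of $\mathcal{N}_\ell \cap f$ is determined, up to small-loop absorption, by the vertex signs, so the local homeomorphisms agree on shared edges and extend across $\partial P^\varepsilon(sD)$ by smoothness of $D$.

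For the probability bounds in step (ii), stationarity of $\Psi$ reduces each per-face estimate to a local computation near the origin. Event (a) constrains $\Psi$ and $\nabla \Psi$ simultaneously (a near-tangential crossing, where $\nabla \Psi$ is nearly parallel to the edge while $\Psi \approx \ell$), and the non-local nature of the cap adds an extra codimension beyond the tangency itself; event (b) requires two well-separated near-zeros of $\Psi - \ell$ within $f$. Applying the Kac--Rice formula (or suitable Gaussian small-ball estimates), together with the non-degeneracy of $\nabla \Psi(0)$ guaranteed by Assumption~\ref{assumpt:degen}, each event satisfies $\mathbb{P}(B_f) = O(\varepsilon^p)$ for some $p > 2$. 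Since $P^\varepsilon(sD)$ contains $O((s/\varepsilon)^2)$ faces, the expected number of bad faces is $O(s^2 \varepsilon^{p-2})$, which vanishes provided $\varepsilon = o(s^{-2/(p-2)})$; the target exponent $-1-\delta$ corresponds to $p$ exceeding $4$ by a $\delta$-dependent margin.

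The main obstacle is the combination of the topological step (i) and the sharp quantification in step (ii) for the macroscopic double-crossing event (a). One has to formulate a quantitative criterion distinguishing ``small'' caps (compatible with a global $\varepsilon$-homeomorphism absorbing them as local loops) from ``macroscopic'' caps, and then bound the probability of the latter by a codimension count that actually yields $p > 4$. This is more delicate than the analogous bound for Theorem~\ref{thm:main1}, since there one needs only $p > 2$; here the improvement in the scaling from $s^{-2}$ to $s^{-1-\delta}$ is bought precisely by working harder on this probability. A secondary, but largely routine, obstacle is treating the boundary faces of $P^\varepsilon(sD)$, handled by the smoothness of $\partial D$ and the $3 d(\mathcal{L})$ slack in the definition of $\varepsilon$-homeomorphism.
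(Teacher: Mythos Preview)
Your high-level strategy is the paper's, but two steps contain genuine gaps.

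First, the claim that a nodal loop contained in a single face ``can be absorbed by an ambient isotopy'' is false: the $\varepsilon$-homeomorphism must map $\mathcal{N}_\ell \cap P^\varepsilon(D)$ \emph{onto} $\mathcal{N}_\ell^\varepsilon \cap P^\varepsilon(D)$, so an extra component of $\mathcal{N}_\ell$ cannot be removed by any self-homeomorphism of $P^\varepsilon(D)$. The paper does not absorb such loops; it shows they are rare, by observing that a small excursion domain forces a critical point with $|\Psi - \ell| = O(\varepsilon^2)$ and then applying a two-dimensional Kac--Rice bound (Proposition~\ref{prop:krmax}) to get expected total count $O(\varepsilon^2 s^2)$.

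Second, your events (a) and (b) and the ``extra codimension'' heuristic are too vague to deliver $p>4$; this is where the real work lies. The paper's precise replacement for the bad double-crossing is the \emph{tubular-crossing} of an edge $e$: some component of $(f_1 \cup f_2)\setminus\mathcal{N}_\ell$ separates the two endpoints of $e$. An interior double-crossing that is \emph{not} tubular can always be undone by a local homeomorphism supported on $f_1 \cup f_2$, so only tubular-crossings, four-crossings and small excursion domains need to be controlled in the interior. The crucial step giving $p=4-\delta$ is the deterministic Lemma~\ref{lem:triple}: a tubular-crossing (or four-crossing) forces either a non-degenerate \emph{triple} of crossings on the edges of $f_1 \cup f_2$, or two crossings within distance $O(\varepsilon^{3/2})$ of each other or of a vertex. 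The former is handled by a three-point Kac--Rice estimate (Proposition~\ref{prop:kr3}) giving $O(\varepsilon^{4-\delta})$, the latter by the two-point estimate (Proposition~\ref{prop:kr2}); this reduction to triple-crossings is the missing idea. The boundary is also more than ``routine slack'': on $\partial P^\varepsilon(sD)$ the local homeomorphism trick is unavailable (only one adjacent face lies in $P^\varepsilon(sD)$), so ordinary double-crossings must still be ruled out there. Smoothness of $D$ enters precisely to ensure there are only $O(s/\varepsilon)$ boundary edges, whence the weaker $O(\varepsilon^3)$ double-crossing bound already gives a vanishing contribution $O(\varepsilon^2 s)$.
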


\begin{remark}
\label{rem:bg2}
As a corollary of Theorem \ref{thm:main2}, we improve the main result of \cite{BG16} on RSW estimates for the nodal set of positively-correlated planar Gaussian fields. In particular, we significantly weaken the required decay exponent of the correlation kernel, from $\alpha \approx 325$ to $\alpha = 16$ (we also weaken slightly the required non-degeneracy conditions, but this is not as important). So as not to disrupt the exposition of our results, and since the proof is not self-contained (it requires knowledge of \cite{BG16}), we defer our discussion of the proof of this corollary to Appendix \ref{appendix3}.

On the other hand, it is very likely that the optimal decay exponent under which the RSW estimate holds in general is actually much lower, perhaps as low as $\alpha = 1$. This appears challenging to prove, and likely requires new ideas. 
\end{remark}

\begin{theorem}[RSW estimate for the nodal set of positively-correlated planar Gaussian fields]
\label{thm:rsw}
Fix $\delta > 0$. Suppose that $\kappa$ satisfies Assumption \ref{assumpt:degen}, and is also invariant under reflection through the horizontal axis and under rotation by $\pi/2$. Suppose further that $\kappa(x) \ge 0$, and $\kappa(x) = o(|x|^{- 16 - \delta})$ as $|x| \to \infty$. Then the nodal set $\mathcal{N}$ satisfies the RSW estimate in Definition~\ref{def:rsw}. It follows that the nodal domains $\mathbb{R}^2 \setminus \mathcal{N}$ also satisfy this estimate.
\end{theorem}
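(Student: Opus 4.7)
The plan is to feed the improved global discretisation of Theorem~\ref{thm:main2} into the Beffara--Gayet framework of \cite{BG16}, which derives the RSW estimate from such a discretisation combined with Tassion's criterion \cite{Tas16}. Since the overall logical structure follows \cite[Section~4]{BG16}, with Theorem~\ref{thm:main2} replacing \cite[Theorem~1.5]{BG16}, the main content is to track how the weaker mesh-size requirement translates into a weaker tail assumption on $\kappa$; the detailed accounting is deferred to Appendix~\ref{appendix3}.

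First I would choose a symmetric periodic lattice $\mathcal{L}$ (e.g.\ the hexagonal lattice, whose dual is triangular and for which the induced sign percolation is a standard self-matching site model) and set the mesh $\varepsilon = s^{-1-\delta'}$ for some $\delta' \in (0,\delta)$ to be calibrated at the end. Applied to the given smooth domain $D$ and boundary arcs $\gamma, \gamma'$, Theorem~\ref{thm:main2} reduces the crossing event for $\mathcal{N}$ in $sD$ to the corresponding crossing event for $\mathcal{N}^\varepsilon$, up to an error $o(1)$ in $s$. It therefore suffices to produce a uniform lower bound on crossing probabilities for the discrete sign percolation $\mathcal{P}^\varepsilon = (\mathrm{sgn}\,\Psi(v))_{v \in \varepsilon \mathcal{V}}$.

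To obtain such a bound I would invoke Tassion's criterion in the form used in \cite{BG16}. The required structural hypotheses are available: translation and dihedral symmetry come from the symmetries of $\kappa$ and of the chosen~$\mathcal{L}$; the self-dual crossing probability equals $1/2$ at $\ell=0$ by the sign-flip symmetry $\Psi \leftrightarrow -\Psi$; and positive association of the Gaussian sign field follows from Pitt's theorem together with $\kappa \ge 0$. The only non-trivial input is a quantitative quasi-independence estimate of the form
\[
\bigl|\mathbb{P}(A\cap B) - \mathbb{P}(A)\,\mathbb{P}(B)\bigr|
\;\le\; C\,|R_A \cap \varepsilon\mathcal{V}|\cdot|R_B \cap \varepsilon\mathcal{V}|\cdot \sup_{|x|\ge r}|\kappa(x)|
\]
for events $A,B$ depending on $\mathcal{P}^\varepsilon$ in disjoint regions $R_A, R_B$ at separation~$r$; this is a Slepian / normal comparison applied to the Gaussian vector $(\Psi(v))_{v \in R_A \cup R_B}$.

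The main obstacle, and the point where the exponent $16$ emerges, is the parameter balance. In the Tassion scheme the quasi-independence cost is polynomial in the number of sites involved, which with $\varepsilon = s^{-1-\delta'}$ is bounded by $s^{O(1+\delta')}$, while the separations that need to be handled are of order $s$. Redoing the exponent count of \cite[Section~4]{BG16} with the mesh of Theorem~\ref{thm:main2} in place of that of \cite[Theorem~1.5]{BG16}, one finds that the loss is absorbed precisely when $\kappa(x) = o(|x|^{-16-\delta})$, replacing the $\alpha \approx 325$ of \cite{BG16}. Once RSW is established for $\mathcal{N}^\varepsilon$, transferring back through Theorem~\ref{thm:main2} yields the claim for $\mathcal{N}$; the corresponding statement for $\mathbb{R}^2 \setminus \mathcal{N}$ follows from planar duality for $\mathcal{P}^\varepsilon$ combined with the sign-flip symmetry, which interchanges the two colour classes of the discrete model.
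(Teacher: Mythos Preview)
Your overall plan---feed Theorem~\ref{thm:main2} into the Beffara--Gayet/Tassion machinery---is exactly right, but the proposal hand-waves precisely where the work is, and as written the ingredients you list do not yield the exponent~$16$.

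First, the quasi-independence input. The bound you write, $|R_A\cap\varepsilon\mathcal V|\cdot|R_B\cap\varepsilon\mathcal V|\cdot\sup_{|x|\ge r}|\kappa(x)|$, is \emph{not} what a Slepian/normal-comparison argument gives for \emph{arbitrary} sign events: Plackett/Berman bounds apply to a single orthant, and summing over the $2^{m+n}$ orthants that make up $A\cap B$ destroys the bound. The paper does not use (and does not claim) an $m^2\eta$ estimate; instead it proves a new quasi-independence result (Proposition~\ref{p:qi}) via spectral truncation plus Pinsker, giving a bound of order $(m+n)^{4/3}\eta^{1/3}$, which requires $\eta=o(m^{-4})$ rather than the $\eta=o(m^{-8})$ of \cite[Theorem~4.3]{BG16}.

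Second, the exponent count. Substituting Theorem~\ref{thm:main2} alone into \cite{BG16} drops the exponent from~$\approx 325$ only to~$\approx 55$, not to~$16$. The paper reaches~$16$ by two further, independent improvements you do not mention: (i) replacing the constant $\rho=2/3$ in \cite[Lemma~2.2]{Tas16} by $\rho\uparrow 1$, which shrinks the lattice-adjustment exponent $1+\log(3/2)/\log(4/3)$ in~\eqref{e:const} to $1+\delta_1$; and (ii) the improved quasi-independence exponent from Proposition~\ref{p:qi}. Only after both does the count $\alpha = 2\cdot(1+1)\cdot 4 = 16$ emerge. Your sentence ``redoing the exponent count\ldots one finds~$16$'' is therefore not supported by the ingredients you have assembled; with the $m^2\eta$ bound you wrote (were it valid) and no Tassion modification, the count would land near~$13.6$, not~$16$, while with the unmodified \cite{BG16} quasi-independence it lands near~$55$.
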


\subsubsection{Discretisation on the global scale up to visible ambiguities}

The third discretisation scheme applies only to the nodal set of the random plane wave, and is based on the observation that, because of the strong rigidity of the plane wave, errors in the discretisation of the nodal set are with high probability contained within local regions which display a certain visible signature in their discretisation (see Remark \ref{r:heurrpw} for a heuristic explanation of this). We call such local regions `visible ambiguities', and if we are willing to accept that the discretisation is only correct up to these ambiguities, the mesh can be coarsened considerably. This is potentially useful in numerical simulations, since it allows the plane wave to be initially evaluated on a coarse mesh, before a finer mesh is used on a few local regions to resolve the visible ambiguities (see Remark \ref{r:dynamic}).

For clarity of presentation, we only state our result in the case of the regular hexagonal lattice, although we believe that similar results hold, with suitable modification, for any periodic lattice (see Remark \ref{rem:anylattice}).

Let us first make precise the concept of a visible ambiguity. Fix the level $\ell = 0$. For a regular hexagon $H$, we say that $H$ \textit{displays a Type $1$ error pattern} if its vertices, going clockwise, change sign in the percolation process $\mathcal{P}^\varepsilon$ more than twice. For adjacent regular hexagons $H_1$ and $H_2$ with common edge $e$, we say that $H_1$ and $H_2$ \textit{display a Type $2$ error pattern} if: 
\begin{enumerate}
\item The endpoints of the edge $e$ have the same sign in the percolation process $\mathcal{P}^\varepsilon$; and
\item Both $H_1$ and $H_2$ have vertices of the opposite sign in the percolation process $\mathcal{P}^\varepsilon$ to the endpoints of $e$.
\end{enumerate}
Remark that, in the context of the discretised nodal set $\mathcal{N}^\varepsilon := \mathcal{N}_0^\varepsilon$, the presence of an error pattern means either that the discretised nodal lines intersect and so cannot be a true representation of the topology of the nodal set (Type 1), or come so close together that there is a high probability that they are not a true representation (Type 2); see Figure \ref{fig:err} below.
For each $\varepsilon > 0$, we define the set of visible ambiguities $\pi^\varepsilon \subseteq \varepsilon \mathcal{F}$ to be the connected components of the union over faces $f  \in \varepsilon \mathcal{F}$ that display a Type $1$ error pattern and adjacent faces $f_1, f_2 \in \varepsilon \mathcal{F}$ that display a Type $2$ error pattern (these components will usually consist of at most two adjacent faces, but may contain more). 

\begin{figure}[ht]
\begin{center}
\includegraphics[scale=1]{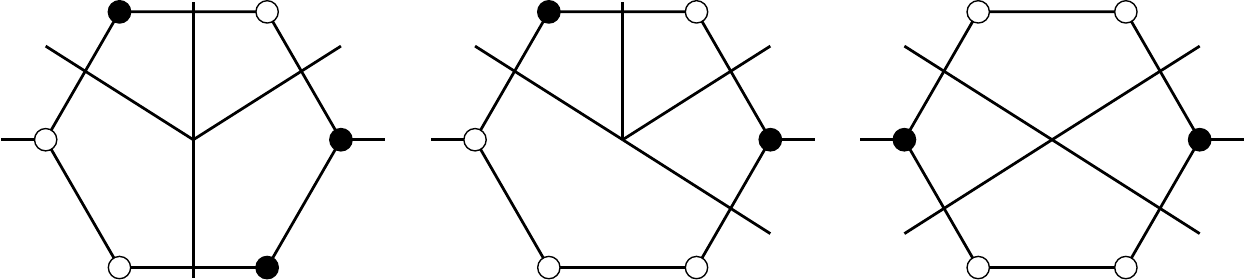}\\
\vspace{0.3cm}
\includegraphics[scale=1]{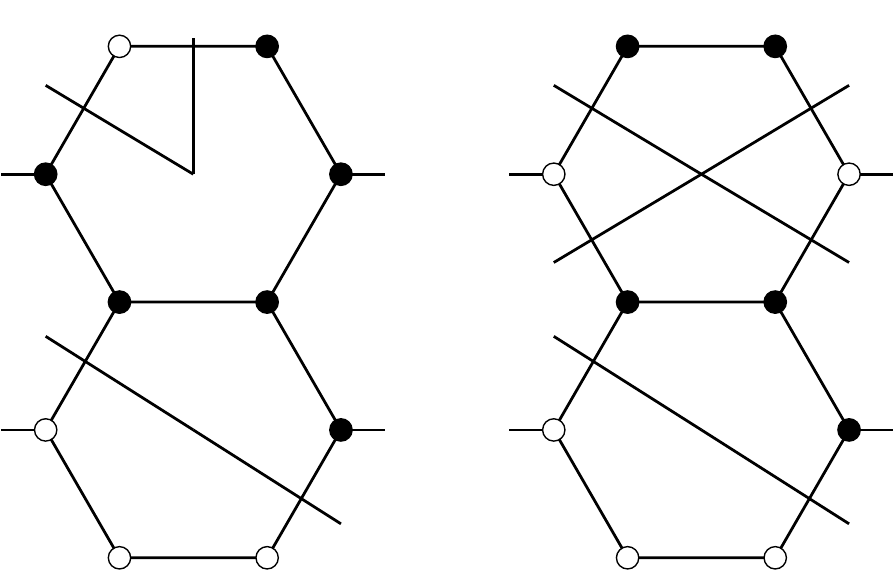}
\end{center}
\caption{Examples of Type $1$ error patterns (top row) and Type $2$ error patterns (bottom row; note, the figure on the right also contains a Type $1$ error pattern); the black and white circles represent the percolation process~$\mathcal{P}^\varepsilon$ and the lines represent the induced discretised nodal set $\mathcal{N}^\varepsilon$.}
\label{fig:err}
\end{figure}

We now make explicit the sense in which we compare the nodal set and its discretisation `up to visible ambiguities'. For a bounded domain $D$ and a set $\mathcal{S}$ such that $|\mathcal{S} \cap \partial D| \in 2 \mathbb{N}$, define a set $\bar{\mathcal{S}} \subseteq D$ to be a \textit{resolution of} $\mathcal{S}$ \textit{in} $D$ if: (i) the components of $\bar{\mathcal{S}}$ that intersect $\partial D$ are a planar matching of the boundary points $\mathcal{S} \cap \partial D$; and (ii) each component of $\bar{\mathcal{S}}$ that does not intersect $\partial D$ is a simple closed curve; see Figure \ref{fig:res}. For each component $\bar{\mathcal{F}} \in \pi^\varepsilon$, we define the collection $\mathcal{N}^\varepsilon(\bar{\mathcal{F}})$ of \textit{resolutions of the ambiguity at} $\bar{\mathcal{F}}$ to be the set of all possible modification of $\mathcal{N}^\varepsilon$ formed by substituting $\mathcal{S} = \mathcal{N}^\varepsilon \cap \bar{\mathcal{F}}$ with a set $\bar{\mathcal{S}} \subseteq \bar{\mathcal{F}}$ that is a resolution of $\mathcal{S}$ in $\bar{\mathcal{F}}$. We define the \textit{discretised nodal sets with resolved ambiguities} $\mathbf{M}^{\varepsilon}$ to be the collection $\{\mathcal{N}^\varepsilon(\bar{\mathcal{F}})\}_{\bar{\mathcal{F}} \in \pi^\varepsilon}$ of all possible such resolutions. For each $\varepsilon > 0$ and bounded domain $D \in \mathbb{R}^2$, we say that the nodal sets $\mathcal{N}$ and $\mathcal{N}^\varepsilon$ are $\varepsilon$\textit{-homeomorphic in} $D$ \textit{up to visible ambiguities} if there exists a $\mathcal{M} \in \mathbf{M}^{\varepsilon}$ such that $\mathcal{N}$ and $\mathcal{M}$ are $\varepsilon$-homeomorphic in $D$.

\begin{figure}[ht]
\begin{center}
\includegraphics[scale=1]{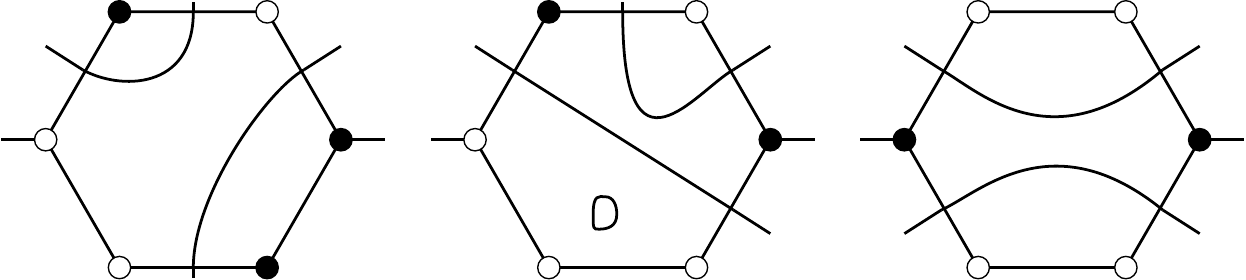}\\
\vspace{0.3cm}
\includegraphics[scale=1]{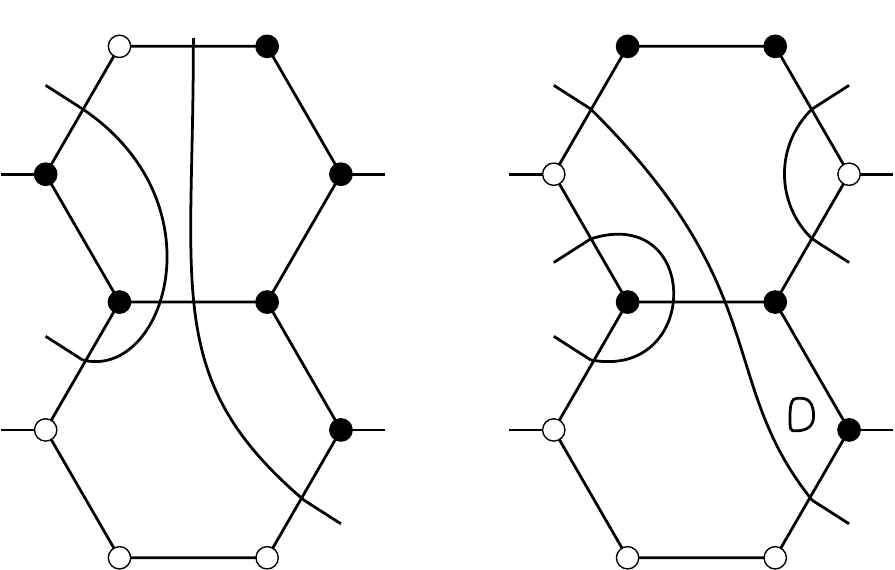}
\end{center}
\caption{Possible resolutions of the discretised nodal sets in the examples in Figure \ref{fig:err}.}
\label{fig:res}
\end{figure}

\begin{theorem}[Discretisation on the global scale up to visible ambiguities]
\label{thm:main3}
Suppose that $\kappa$ satisfies \eqref{eq:rpw} and that $\mathcal{L}$ is a regular hexagonal lattice. Fix a smooth bounded domain $D \subseteq \mathbb{R}^2$. Let $\varepsilon = \varepsilon_s > 0$ be a sequence such that $\varepsilon = o(s^{-1/2})$ as $s \to \infty$. Then, as $s \to \infty$,
\[ \mathbb{P} \left( \mathcal{N}  \text{ and } \mathcal{N}^\varepsilon    \text{ are } \varepsilon\text{-homeomorphic in } sD \text{ up to visible ambiguities}  \right) \to 1 . \]
\end{theorem}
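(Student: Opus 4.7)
The approach is to combine the local-to-global scheme of Theorems~\ref{thm:main1} and~\ref{thm:main2} with the visible ambiguity formalism, leveraging the rigidity of the random plane wave to absorb most local errors into the patterns of $\pi^\varepsilon$. Since the number of hexagonal faces of $\varepsilon\mathcal{F}$ meeting $sD$ is $O(s^2/\varepsilon^2)$, a union bound over faces and adjacent pairs reduces the theorem to a per-face bound on the probability of a \emph{silent local error} -- a failure of $\varepsilon$-homeomorphism in a face or small cluster of faces that is not contained in any component $\bar{\mathcal{F}}\in\pi^\varepsilon$. Under $\varepsilon = o(s^{-1/2})$, a bound of order $\varepsilon^6$ per face is sufficient, since the total is then $O(s^2\varepsilon^4) = o(1)$.

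I would carry this out in four steps. First, a \emph{local dichotomy}: enumerate the topological types of $\mathcal{N}\cap f$ for a hexagonal face $f$, and of $\mathcal{N}\cap(f_1\cup f_2)$ for an adjacent pair, as a function of the induced sign pattern $\mathcal{P}^\varepsilon$ at the vertices, and show that every non-standard type -- multiple arcs, internal closed loops, double-crossings of an edge, and so on -- either already forces a Type~$1$ ambiguity at $f$ or a Type~$2$ ambiguity with a neighbour, or falls into a short list of \emph{residual} configurations, chiefly internal closed loops and ``tight double-crossings'' whose bubbles remain within a single cell. Second, \emph{eliminate internal loops deterministically}: every nodal domain of the random plane wave contains a disc of radius $c_2/k$, a deterministic consequence of \eqref{eq:helm}, so for $\varepsilon$ sufficiently small no closed nodal loop can fit inside a single hexagon or small cluster of hexagons. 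Third, \emph{bound the residual configurations probabilistically}: establish a per-edge probability bound of $O(\varepsilon^6)$ on the remaining residual types (tight double-crossings and their variants) using a Kac-Rice argument together with the Helmholtz rigidity, and take a union bound. Fourth, \emph{glue to a global homeomorphism}: off the visible ambiguities, combine the local homeomorphisms exactly as in the proof of Theorem~\ref{thm:main2}; inside each $\bar{\mathcal{F}}\in\pi^\varepsilon$, Step~2 ensures that $\mathcal{N}\cap\bar{\mathcal{F}}$ has no component disjoint from $\partial\bar{\mathcal{F}}$, so it is by construction a valid element of the resolution class $\mathcal{N}^\varepsilon(\bar{\mathcal{F}})$, allowing the global homeomorphism to be extended across the ambiguity.

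The main obstacle is the $O(\varepsilon^6)$ bound in Step~3. A bare Kac-Rice count for a double-crossing on an edge of length $\varepsilon$ yields only $O(\varepsilon^3)$ per edge: the pair of nearby zeros of $\Psi|_e$ contributes one factor of $\varepsilon$ from the standard density of zeros and two more from the linear repulsion between nearby zeros of a smooth Gaussian process. Summed over the $O(s^2/\varepsilon^2)$ edges this gives $O(s^2\varepsilon)$, which is insufficient, so the missing factors of $\varepsilon$ must come from the Helmholtz rigidity itself: the constraint $\Delta\Psi+k^2\Psi=0$ restricts the nodal set from bending sharply on scales much smaller than $1/k$, which should further suppress tight double-crossings beyond what a generic Gaussian calculation gives. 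Concretely, writing the Taylor expansion of $\Psi$ along the edge and using \eqref{eq:helm} to eliminate redundant coefficients, the tightness of the bubble should force several further tangential derivatives of $\Psi$ to be simultaneously small -- each an independent Gaussian condition under Assumption~\ref{assumpt:degen} -- yielding the extra factor of $\varepsilon^3$. Making this reduction rigorous, and checking that the conditions are genuinely independent, is the most delicate part of the argument.
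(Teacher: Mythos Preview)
Your high-level strategy is correct and matches the paper: reduce to a per-edge bound of order $\varepsilon^6$ on the probability of a ``silent'' error, then union-bound over the $O(s^2/\varepsilon^2)$ edges to get $O(s^2\varepsilon^4)=o(1)$. You also correctly identify that the Helmholtz rigidity must supply the missing factors beyond the bare $O(\varepsilon^3)$ double-crossing estimate, and that small nodal domains are ruled out deterministically.

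However, your proposed mechanism for the $\varepsilon^6$ bound --- a one-dimensional Kac--Rice along the edge together with ``several further tangential derivatives simultaneously small'' --- is not how the paper extracts these factors, and it is not clear it can be made to work. The paper's argument is two-dimensional and geometric rather than edge-based. It first shows (Section~\ref{sec:pert}) that near any point the nodal set of a plane wave is well-approximated by a \emph{regular conic section} (line, circle, or hyperbola with \emph{right-angled} asymptotes) unless three conditions hold simultaneously at the centre: $|\nabla\Psi|$ small, both Hessian eigenvalues $\lambda_1,\lambda_2$ small, and $|\lambda_1+\lambda_2|$ small. The Helmholtz equation enters precisely by forcing $\lambda_1+\lambda_2=-k^2\Psi$, so that near the nodal set the saddle is automatically close to right-angled unless both eigenvalues are tiny. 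It then shows, via an explicit lemma about hexagonal geometry (Lemma~\ref{lem:hex}), that any near-right-angled cone separating the endpoints of an edge \emph{must} produce a Type~2 pattern on the adjacent hexagons --- this is where the choice of lattice is used, and you do not address it. Finally, the small-ball probability of the triple condition is bounded by $c\,\varepsilon^2\cdot\varepsilon^2\cdot\varepsilon^2=c\,\varepsilon^6$ using the \emph{eigenvalue repulsion} in the Hessian (the joint density of $(\lambda_1,\lambda_2)$ carries a factor $|\lambda_1-\lambda_2|$; Proposition~\ref{prop:smalldevest}). This repulsion is an essential ingredient you do not mention; without it one loses a power of $\varepsilon$.

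So the gap is concrete: your Step~3 needs to be replaced by (i) a perturbation lemma showing the nodal set is locally a regular conic unless $\nabla\Psi$, both $\lambda_i$, and $\lambda_1+\lambda_2$ are all $O(\varepsilon)$ (resp.\ $O(\varepsilon^2)$ for the trace); (ii) a hexagonal-lattice lemma showing regular conics always trigger a visible pattern; and (iii) a small-ball estimate exploiting eigenvalue repulsion. Your ``tangential derivatives along the edge'' heuristic does not obviously capture the transverse geometry needed for (ii), nor the repulsion needed for (iii). You should also separately control double-crossings on the boundary $\partial P^\varepsilon(sD)$, which the paper does and which independently contributes a term of order $\varepsilon^2 s$.
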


\begin{remark}
 \label{r:dynamic}
Theorem \ref{thm:main3} demonstrates that the nodal set of the random plane wave can be reconstructed using far fewer observations of (the sign of) the field than are needed in the general case. In particular, if the mesh is allowed to be chosen \textit{dynamically}, then roughly $s^3$ observations suffice to reconstruct the nodal set in a domain of radius $s$ (i.e.\ the $s^{3+\delta}$ vertices of a fixed mesh on the scale $\varepsilon = s^{-1/2-\delta/2}$, then a further $s^2$ vertices chosen dynamically to resolve, with a mesh on the scale $\varepsilon = s^{-1}$ (c.f.\ Theorem \ref{thm:main2}), the roughly $s$ visible ambiguities that arise). This can be contrasted with the roughly $s^4$ points needed to reconstruct a level set in the general case under the scheme in Theorem~\ref{thm:main2}. It is an interesting question whether a dynamic scheme might be able to lower this bound in the general case.
\end{remark}

\begin{remark}
\label{r:heurrpw}
We briefly explain why the nodal set of the random plane wave has discretisation errors that are easily identifiable as potential ambiguities. Consider that, for a general smooth function, errors in the discretisation of its nodal set are likely to result from either small nodal domains or low-lying saddle points. For the random plane wave, however, small nodal domains are deterministically ruled out. Moreover, because the wave satisfies the Helmholtz equation~\eqref{eq:helm}, any low-lying saddle point has principal axes that intersect at approximately right-angles, unless both eigenvalues of the Hessian $\nabla^2 \Psi$ at the saddle point are very small (which is highly unlikely due to eigenvalue repulsion, see Section \ref{sec:small}). This ensures that a low-lying saddle, and hence a possible discretisation error, is very likely to be easily identifiable by observing the field at nearby lattice vertices. 

Observe that the situation is different for other level lines, since there will be a much larger range of angles that will be attained by saddles lying close to this level. In particular, the angle between the principal axes of the saddle point will be close to $0$ whenever \textit{one} of the eigenvalues of the Hessian $\nabla^2 \Psi$ at the saddle point, rather than \textit{both} of them, is small. This is a much more likely event, and as a result no improvement can be made over the discretisation scheme in Theorem \ref{thm:main2}.
\end{remark}

\begin{remark}
The exact property of regular hexagonal lattices that is crucial to our result is stated in Lemma \ref{lem:hex}. To give a brief description, what is important is that a cone at a near-right-angle must induce an error pattern somewhere in the lattice (i.e.\ defining the percolation process via membership of the interior/exterior of the cone). Note that this property does not hold for, say, the square lattice, since if the cone was centred at the exact centre of a face, and was not exactly right-angled, then it may not induce an error pattern anywhere in the lattice; see Figure \ref{fig:hex}.
\end{remark}

\begin{remark}
\label{rem:anylattice}
Although we do not establish it rigorously, it is easy to see from our proofs that a similar result to Theorem \ref{thm:main3} should hold for any periodic lattice $\mathcal{L}$, only with a different, perhaps more complicated, definition of the Type $2$ error patterns used to identify the set of visible ambiguities in collections of nearby cells (in all cases, Type $1$ error patterns are defined identically; this only has content if $\mathcal{L}$ is not a triangulation). To illustrate, let us give suitable definitions of Type $2$ error patterns for the square lattice and the regular triangular lattice -- i.e.\ the lattice in which each face is an equilateral triangle.

For the square lattice, a set of four horizontally or vertically adjacent faces $f_1, f_2, f_3, f_4 \in \varepsilon \mathcal{F}$ display a Type $2$ error patten if: 
\begin{enumerate} 
\item The edge that is common to the faces $f_2$ and $f_3$ has endpoints that have the opposite sign in the percolation process $\mathcal{P}^\varepsilon$; and
\item Both $f_1 \cup f_2$ and $f_3 \cup f_4$ have vertices of the opposite sign in $\mathcal{P}^\varepsilon$.
\end{enumerate}

For the regular triangular lattice, the set of Type $2$ error patterns can be defined on sets of $12$ faces by grouping the triangular faces into pairs of hexagons (in each of the three ways of doing this), and applying the same definition as for the hexagonal lattice.

In general, for each lattice $\mathcal{L}$ there exists a \textit{visibility parameter} $\mu > 0$ such that we may define a certain set of Type $2$ error patterns on collections of faces that lie inside balls of radius~$\mu$, for which the discretisation up to ambiguities holds by analogy to Theorem~\ref{thm:main3}.
\end{remark}

\begin{figure}[ht]
\begin{center}
\includegraphics[scale=1]{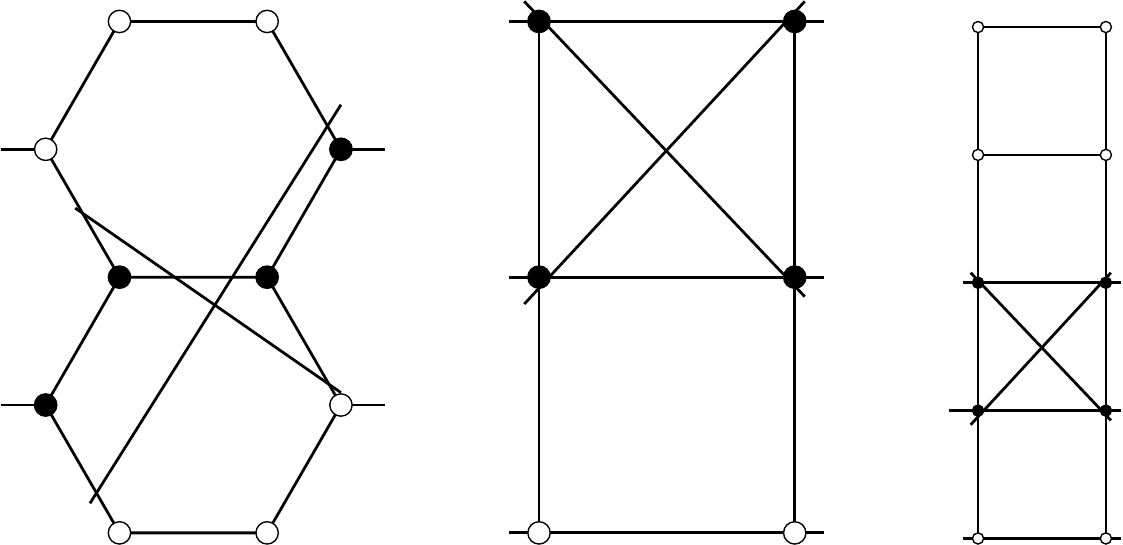}
\end{center}
\caption{A cone at near-right-angle induces an error pattern (in this case Type 2) on a regular hexagonal lattice (left), whereas it may not on a square or triangular lattice (centre), unless we widen the definition of Type 2 errors as described in Remark \ref{rem:anylattice} (right).}
\label{fig:hex}
\end{figure}

\subsubsection{Discretisation for estimating the Nazarov-Sodin constant}

The final discretisation scheme gives a way to estimate the constant $c_{NS} = c_{NS}(\kappa)$ defined in \eqref{eq:ns}. As discussed above, under some general conditions on $\kappa$ this constant is known to exist, but it has not yet been computed explicitly for any non-degenerate Gaussian field. Hence, there is much interest in developing ways of numerically approximating this constant. 

Our discretisation scheme gives a way to compare the number of nodal domains of planar Gaussian fields with the number of excursion domains induced by the discretised nodal set $\mathcal{N}^\varepsilon$. To this end, for each $\varepsilon > 0$ and bounded domain $D$, recall the definition of the $\varepsilon \mathcal{L}$-compatible set $P^\varepsilon(D)$, and let $N^{\varepsilon}(D)$ denote the number of components of the set $P^{\varepsilon}(D) \setminus \mathcal{N}^\varepsilon$.

\begin{theorem}[Discretisation for estimating the constant $c_{NS}$]
\label{thm:main4}
Suppose that $\kappa$ satisfies Assumption \ref{assumpt:degen}, and further assume that $c_{NS} = c_{NS}(\kappa)$ exists and is finite. Fix a smooth bounded domain $D \subseteq \mathbb{R}^2$. Let $\varepsilon = \varepsilon_s > 0$ be a sequence such that $\varepsilon = o(1)$ as $s \to \infty$. Then, as $s \to \infty$,
\begin{align}
\label{eq:nsapprox}
 \left|  \mathbb{E} [N^{\varepsilon}(sD) ] / \text{Area}(s D)  - c_{NS} \right| \to 0 . 
 \end{align}
\end{theorem}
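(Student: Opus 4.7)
The plan is to establish the mean bound
\[
\mathbb{E}\bigl[ \, \bigl| N^\varepsilon(sD) - N(sD) \bigr| \, \bigr] = o(s^2),
\]
where $N(sD)$ denotes the number of genuine nodal domains in $sD$, and then combine it with the Nazarov-Sodin convergence in mean $\mathbb{E}[N(sD)]/\mathrm{Area}(sD) \to c_{NS}$ supplied by \eqref{eq:ns}. The conclusion \eqref{eq:nsapprox} follows by the triangle inequality. I would split
\[
N^\varepsilon(sD) - N(sD) = \bigl[ N^\varepsilon(sD) - N(P^\varepsilon(sD)) \bigr] + \bigl[ N(P^\varepsilon(sD)) - N(sD) \bigr]
\]
and treat the two terms separately. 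The second (boundary) term is controlled by a Kac-Rice estimate: the boundary $\partial P^\varepsilon(sD)$ is a piecewise-linear curve of length $O(s)$, the discrepancy is at most the number of nodal arcs crossing it, and the expected number of such crossings is $O(s) = o(s^2)$.

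For the first (bulk) term, I would call a face $f \in P^\varepsilon(sD) \cap \varepsilon \mathcal{F}$ \emph{good} if $\mathcal{N}_\ell \cap f$ consists of a single simple arc connecting two distinct edges of $f$, without any edge of $f$ being double-crossed. The topological observation underpinning Theorem \ref{thm:main1} is that when every face is good, $\mathcal{N}_\ell$ and $\mathcal{N}^\varepsilon$ are $\varepsilon$-homeomorphic in $P^\varepsilon(sD)$, so $N^\varepsilon(sD) = N(P^\varepsilon(sD))$. A bad face can alter the number of complementary components by at most an absolute constant, since the local rewiring is purely local, so
\[
\bigl| N^\varepsilon(sD) - N(P^\varepsilon(sD)) \bigr| \le C \, \#\{\text{bad faces in } P^\varepsilon(sD)\}.
\]
By stationarity the expectation of the right-hand side is at most $C(s/\varepsilon)^2 \cdot \mathbb{P}(f_0 \text{ is bad})$ for a reference face $f_0$ of diameter $\varepsilon$.

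The heart of the argument is thus a per-face bound of the form $\mathbb{P}(f_0 \text{ is bad}) = O(\varepsilon^p)$ for some $p > 2$. The bad event is a union of several sub-events -- the double-crossing of an edge by $\mathcal{N}_\ell$, a closed nodal loop contained in $f_0$, or the presence of two or more disjoint nodal arcs in $f_0$ -- each of which is a Kac-Rice event of codimension at least one. The dominant contribution (double-crossing) admits a second-moment estimate of order $O(\varepsilon^3)$, since two near-coincident zeros of $\Psi - \ell$ on an edge force the tangential derivative to be of order $\varepsilon$; the remaining sub-events decay at least as fast. These estimates are essentially already proved in the analysis of Theorem \ref{thm:main1}, which in fact requires the stronger bound $p > 3$ in order for a union bound over $O(s^2/\varepsilon^2)$ faces to close at $\varepsilon = o(s^{-2})$. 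Substituting yields $\mathbb{E}[\#\{\text{bad faces}\}] = O(s^2\varepsilon^{p-2}) = o(s^2)$ as $\varepsilon \to 0$, which completes the proof.

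The main obstacle is producing a quantitative per-face bad probability whose decay beats $\varepsilon^2$, uniformly in the ambient scale $s$. Both points are inherited from the local Kac-Rice analysis developed for Theorem \ref{thm:main1}: the $s$-independence comes from stationarity (the per-face probability depends only on the local law of $\Psi$ at scale $\varepsilon$), and the decay rate is a direct consequence of the codimension of the various degeneracies encoded in the bad event. Compared with Theorems \ref{thm:main1}-\ref{thm:main2}, the present statement is much less delicate because the required threshold $p > 2$ on the per-face exponent is strictly weaker than anything needed there, which is why the mild assumption $\varepsilon = o(1)$ suffices.
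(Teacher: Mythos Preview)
Your overall strategy---bound $\mathbb{E}\bigl[|N^\varepsilon(sD)-N(sD)|\bigr]$ by counting local discretisation failures via Kac--Rice estimates, then invoke the Nazarov--Sodin convergence---is the same as the paper's. However, there is a genuine gap: the assertion that ``a bad face can alter the number of complementary components by at most an absolute constant'' is not justified, and is in fact false. A single edge carrying $2k$ crossings of $\mathcal{N}_\ell$ can bound $k$ nested half-moon regions entirely inside the face, each of which may be a separate component of $P^\varepsilon(sD)\setminus\mathcal{N}_\ell$ that the discretisation misses; so one bad face can account for an arbitrarily large discrepancy. The fix is to bound not by the \emph{number} of bad faces but by the total \emph{multiplicity} of the bad events (excess crossings per edge, number of arcs per face, etc.). This is precisely why the paper works throughout with the multiplicity versions $\tilde D(e),\tilde F(f),\tilde T(e)$ in Propositions~\ref{prop:condc}--\ref{prop:consed}: the bound $\mathbb{E}[\tilde D(e)]=O(\varepsilon^3)$, not merely $\mathbb{P}(D(e))=O(\varepsilon^3)$, is what closes the argument. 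With that correction your bulk term becomes $O(\varepsilon s^2)$ and the proof goes through.

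Once repaired, your route differs from the paper's only in which family of bad events you track. You use the \emph{local} events from Theorem~\ref{thm:main1} (all bulk double-crossings), whereas the paper uses the \emph{global} events from Theorem~\ref{thm:main2} (only boundary double-crossings, plus tubular-crossings, four-crossings and small excursion domains). Both yield $o(s^2)$ under $\varepsilon=o(1)$, but the paper's choice delivers the sharper quantitative rate $O(\varepsilon^{2-\delta})$ per unit area recorded in the Remark following the theorem, versus the $O(\varepsilon)$ that your simpler route gives.
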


\begin{remark}
For applications, it may be useful to have quantitative bounds on the rate of convergence in \eqref{eq:nsapprox}. The proof of Theorem~\ref{thm:main4} yields the following bound. For each $\delta \in (0, 1)$, there exists a constant $c = c(\kappa, \mathcal{L}, D, \delta) > 0$ such that, for each $s > 0$ and $\varepsilon \in (0, 1)$,
\[  \left| \mathbb{E} [N^{\varepsilon}(sD) ] - \mathbb{E}[N(sD)] \right| / \text{Area}(s D) < c \varepsilon^{2 - \delta} . \]
\end{remark}

%%%%%%%%%%%%%%

\smallskip
\section{Overview of proofs and outline of paper}
\label{sec:overview}

The validity of our discretisation schemes are established by controlling certain `bad' events involving the level sets, each requiring the mesh-size to be set at a maximum level in order to rule them out with overwhelming probability. In this section we give a brief description of these events, and explain how we control them to prove the main results; this information is summarised in Table \ref{tab:1}.

The events that we control are the following (see Figure \ref{fig:badevents} below):
\begin{enumerate}
\item \textbf{Double-crossings} -- An edge $e \in \varepsilon \mathcal{E}$ is said to have a \textit{double-crossing} if $|\mathcal{N}_\ell \cap e| \ge 2$; 
\item \textbf{Small excursion domains} -- An excursion domain $S$ (i.e.\ a component of $\mathbb{R}^2 \setminus \mathcal{N}_\ell$) is called \textit{small} if its boundary $\partial S$ intersects at most one edge in $\varepsilon \mathcal{E}$;
\item \textbf{Four-crossings} -- A face $f \in \varepsilon \mathcal{F}$ is said to have a \textit{four-crossing} if four or more edges $e \in \partial f$ are crossed by $\mathcal{N}_\ell$ (this only has content if $\mathcal{L}$ is not a triangulation);
\item \textbf{Tubular-crossings} -- An edge $e \in \varepsilon \mathcal{E}$ is said to have a \textit{tubular-crossing} if, denoting by $f_1, f_2 \in \varepsilon \mathcal{F}$ the cells adjacent to $e$, there is a component of $(f_1 \cup f_2) \setminus \mathcal{N}_\ell$ that separates the endpoints of $e$ (this implies that $e$ has a double-crossing); and
\item \textbf{Invisible errors} (only defined for the regular hexagonal lattice) -- An edge $e \in \varepsilon \mathcal{E}$ is said to give rise to an \textit{invisible error} if it has a tubular-crossing and, denoting by $f_1, f_2 \in \varepsilon \mathcal{F}$ the cells adjacent to $e$, the faces $f_1$ and $f_2$ do not display a Type $2$ error pattern. In fact, for technical reasons we actually use a slightly modified version of `tubular-crossing' in our definition; we explain the necessary modification in Section~\ref{sec:proofs}.
\end{enumerate}

To emphasise the novelty of our approach, we remark that in the discretisation schemes developed in \cite{BG16} and \cite{MW07} only the first of these error events were controlled.

\begin{figure}[ht]
\begin{center}
\includegraphics[scale=1]{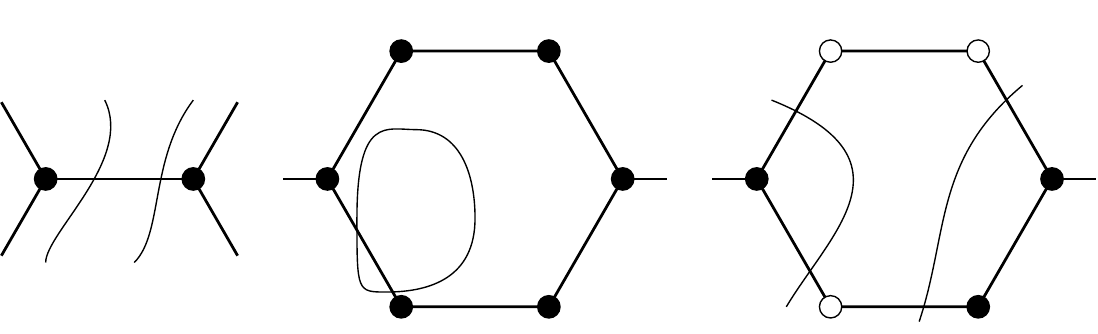}\\
\vspace{0.3cm}
\includegraphics[scale=1]{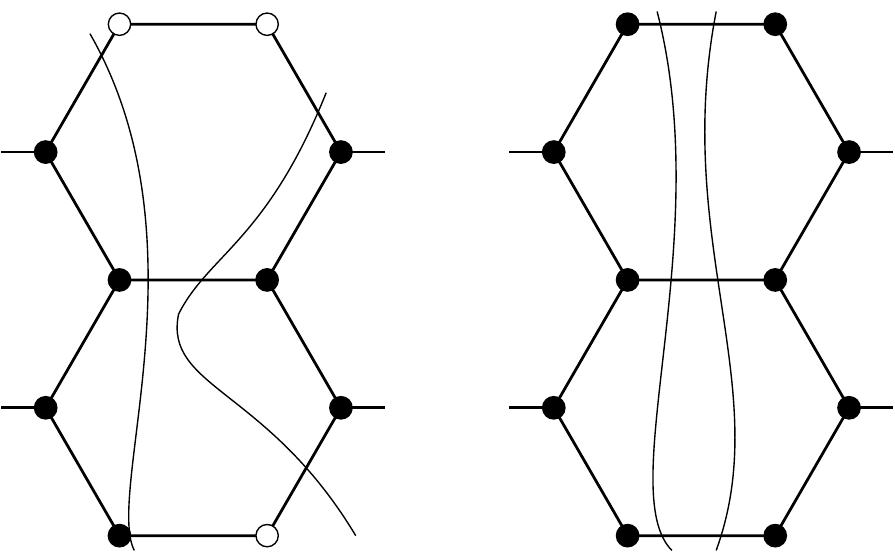}
\end{center}
\caption{An example of each of the `bad' events; from left-to-right and top-to-bottom, these depict a double-crossing, a small excursion domain, a four-crossing, a tubular-crossing and an invisible error. Here $\mathcal{L}$ is the hexagonal lattice.}
\label{fig:badevents}
\end{figure}

For the local discretisation scheme (Theorem \ref{thm:main1}), it is sufficient to control the first three events, since it is easy to see that $\mathcal{N}_\ell$ and $\mathcal{N}_\ell^{\varepsilon}$ being $\varepsilon$-homeomorphic in $f$ is equivalent to the  events (1)--(3) not occurring in $f$ or its boundary edges. To control these events inside $sD$ we must use a mesh-size $\varepsilon = o(s^{-2})$. This is essentially due to double-crossings; to control small excursion domains and four-crossings the mesh-size $\varepsilon = o(s^{-1-\delta})$ is sufficient.

For the global discretisation scheme (Theorem \ref{thm:main2}), we no longer seek to control double-crossings but instead use the following additional insight: a double-crossing that results in a discrepancy in the global topology of the level set must either (i) result in a tubular-crossing, or (ii) be located on the boundary $\partial P^\varepsilon(sD)$. Hence, in addition to small excursion domains and four-crossings, it is sufficient to control tubular-crossings, which require the mesh-size $\varepsilon = o(s^{-1-\delta})$, and double-crossings on the boundary, which require the mesh-size $\varepsilon = o(s^{-1/2})$.

For the global discretisation scheme up to visible ambiguities (Theorem \ref{thm:main3}), we use the fact that discrepancies in the global topology of the nodal lines must either be correctable by resolving a visible ambiguity (e.g.\ four-crossings that lead to Type $1$ errors, or tubular-crossings that lead to Type $2$ errors), or must be the result of invisible errors (note that small nodal domains are deterministically ruled out). Hence we only need to control double-crossings on the boundary and invisible errors, both of which requires the mesh-size $\varepsilon = o(s^{-1/2})$. Again, we note that we actually use a slight modification of the definition of tubular-crossings for this task; see Section \ref{sec:proofs}.

For the discretisation scheme for estimating the constant $c_{NS}$ (Theorem \ref{thm:main4}), we control the exact same events as for the global discretisation scheme (Theorem \ref{thm:main2}). This is natural, since the number of excursion domains is a function of the global topology. On the other hand, since our approximation is in expectation, we need to control the \textit{multiplicity} of these events, rather than simply their occurrence.

\begin{table}[h!]
  \centering
  \caption{Summary of `bad' events}
  \label{tab:1}
  \begin{tabular}{ccc}
    \textbf{Event} & \textbf{Schemes controlled by} & \textbf{Max.\! mesh-size} \\
    \toprule
    Double-crossings & Theorem \ref{thm:main1} &  $\varepsilon = s^{-2}$ \\
    \midrule
    Small excursion domains & Theorems \ref{thm:main1}, \ref{thm:main2} and \ref{thm:main4} & \multirow{3}{*}{$\varepsilon = s^{-1 - \delta}$}\\
    Four-crossings &  Theorems \ref{thm:main1}, \ref{thm:main2} and \ref{thm:main4}   \\
    Tubular-crossings & Theorems \ref{thm:main2} and \ref{thm:main4}   \\
    \midrule
    Double-crossings on boundary & Theorems \ref{thm:main2}, \ref{thm:main3} and \ref{thm:main4}  &  \multirow{2}{*}{$\varepsilon = s^{-1/2}$}\\
    Invisible errors  & Theorem \ref{thm:main3} 
  \end{tabular}
\end{table}

The rest of the paper is structured as follows. In Section \ref{sec:gauss}, we collect preliminary properties of Gaussian vectors and fields; most of these results are standard, but they will serve as a significant input into our proofs. In Section~\ref{sec:kacrice}, we undertake an analysis of events involving crossings of the level set and critical points near the level set, based around several Kac-Rice arguments; these arguments rely on certain matrix computations, and in order not to disrupt the flow of the paper we defer these computations to Appendix \ref{appendix2}. In Section \ref{sec:pert}, which is completely deterministic, we consider the effect of small perturbations on the nodal set of a generic function; the results in this section are only relevant to our analysis of the random plane wave, but are crucial in exploiting the extra control we have in that case, allowing us to reduce the study of the global topology of the nodal set to the study of invisible errors. Finally, in Section~\ref{sec:proofs}, we combine the above ingredients to establish the main results.

%%%%%%%%%%%%%%%%
\smallskip

\section{Gaussian estimates}
\label{sec:gauss}

In this section we collect some preliminary Gaussian estimates. 

\subsection{Basic properties of Gaussian vectors}
In the first part, we state some basic properties of Gaussian vectors. Proposition \ref{prop:cond} is standard, whereas Lemmas \ref{lem:prod} and \ref{lem:gau} follow easily from H\"{o}lder's inequality and the compactness of the set of normalised covariances matrices.

\begin{proposition}[Gaussian regression]
\label{prop:cond}
Let $(X, Y)$ be a centred Gaussian vector of dimension $m+n$ with covariance 
\[ \Sigma = \left[  
\begin{array}{cc}
\Sigma_X & \Sigma_{XY} \\
\Sigma_{XY}^T & \Sigma_Y \\
\end{array} \right] .\]
Suppose that $\Sigma_Y$ is invertible, and fix a vector $\mathbf{y} \in \mathbb{R}^n$. Then, conditionally on $Y = \mathbf{y}$, the vector~$X$ is Gaussian with 
\[\mathbb{E}[X | Y = \mathbf{y}] = \Sigma_{XY} \Sigma_Y^{-1} \mathbf{y} \quad \text{and} \quad  \text{Cov}[ X | Y = \mathbf{y}] =  \Sigma_X - \Sigma_{XY} \Sigma_Y^{-1} \Sigma_{XY}^T .\]
\end{proposition}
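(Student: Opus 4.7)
The plan is to use the classical "Gaussian projection" trick, whereby one subtracts off a suitable linear combination of $Y$ from $X$ to produce a Gaussian vector that is decorrelated from $Y$, and therefore (by Gaussianity) independent of $Y$. Concretely, I would introduce the auxiliary random vector
\[ Z := X - \Sigma_{XY} \Sigma_Y^{-1} Y ,\]
and immediately note that $(Z, Y)$ is jointly centred Gaussian, since it is obtained from the jointly centred Gaussian vector $(X, Y)$ by a deterministic linear transformation. The matrix $\Sigma_{XY} \Sigma_Y^{-1}$ is well-defined by the invertibility assumption on $\Sigma_Y$.

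The first computation is to check that $Z$ and $Y$ are uncorrelated:
\[ \mathrm{Cov}(Z, Y) = \Sigma_{XY} - \Sigma_{XY} \Sigma_Y^{-1} \Sigma_Y = 0 .\]
Since $(Z, Y)$ is jointly Gaussian, uncorrelatedness upgrades to independence. The second computation is for the covariance of $Z$ itself:
\[ \mathrm{Cov}(Z) = \Sigma_X - 2\, \Sigma_{XY} \Sigma_Y^{-1} \Sigma_{XY}^T + \Sigma_{XY} \Sigma_Y^{-1} \Sigma_Y \Sigma_Y^{-1} \Sigma_{XY}^T = \Sigma_X - \Sigma_{XY} \Sigma_Y^{-1} \Sigma_{XY}^T .\]

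Finally, I would rewrite $X = Z + \Sigma_{XY} \Sigma_Y^{-1} Y$ and use the independence of $Z$ and $Y$ to read off the conditional distribution: on the event $\{Y = \mathbf{y}\}$ the summand $\Sigma_{XY} \Sigma_Y^{-1} Y$ becomes the deterministic constant $\Sigma_{XY} \Sigma_Y^{-1} \mathbf{y}$, while $Z$ remains Gaussian with the same distribution as before the conditioning. This gives $\mathbb{E}[X \mid Y = \mathbf{y}] = \Sigma_{XY} \Sigma_Y^{-1} \mathbf{y}$ and $\mathrm{Cov}[X \mid Y = \mathbf{y}] = \mathrm{Cov}(Z) = \Sigma_X - \Sigma_{XY} \Sigma_Y^{-1} \Sigma_{XY}^T$, as claimed.

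Since this is a standard textbook result, there is no genuine obstacle; the only point that deserves care is the justification of the step "uncorrelated implies independent", which uses the fact that $(Z, Y)$ is jointly Gaussian (not merely that each marginal is Gaussian). An alternative approach would be a direct density computation, multiplying out the quadratic form in the joint Gaussian density and completing the square in $X$ to exhibit the conditional density explicitly, but this is less illuminating and requires the Schur complement identity as a side step; I would prefer the projection argument above for its transparency.
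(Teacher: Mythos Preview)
Your argument is correct and is exactly the standard projection proof of this classical fact; the paper itself does not supply a proof at all, simply declaring the proposition ``standard'' and moving on, so there is nothing to compare against.
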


\begin{lemma}[Bound on the product of Gaussian random variables]
\label{lem:prod}
Let $n \in \mathbb{N}$. Then there exists a $c := c(n) > 0$ such that, for each Gaussian vector $X := (X_1, \ldots, X_n)$ with mean $\mu = (\mu_i)_{1 \le i \le n}$ and covariance $\Sigma := (s_{ij})_{1 \le i,j \le n}$, it holds that
\[ \mathbb{E} \bigg[  \prod_{1 \le i \le n} \left| X_i \right|  \bigg] < c \max_{1 \le i \le n}  \max\{ |\mu_i|^n , s_{ii}^{n/2} \} .\]
\end{lemma}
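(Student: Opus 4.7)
The plan is to prove this by Hölder's inequality followed by a routine one-dimensional Gaussian moment estimate; the dependence structure of $X$ (i.e.\ the off-diagonal entries of $\Sigma$) plays no role. First, apply Hölder's inequality with $n$ equal exponents to obtain
\[
\mathbb{E}\Big[\prod_{i=1}^n |X_i|\Big] \;\le\; \prod_{i=1}^n \big(\mathbb{E}[|X_i|^n]\big)^{1/n}.
\]
This reduces the problem to controlling the $n$-th moment of each one-dimensional marginal.

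Second, for each $i$ the marginal $X_i$ is Gaussian with mean $\mu_i$ and variance $s_{ii}$, so writing $X_i = \mu_i + \sqrt{s_{ii}}\, Z$ for a standard normal $Z$ and applying the elementary bound $(a+b)^n \le 2^{n-1}(a^n + b^n)$ yields $\mathbb{E}[|X_i|^n] \le C_n (|\mu_i|^n + s_{ii}^{n/2})$ for a constant $C_n$ depending only on $n$. Combining with the subadditivity inequality $(a+b)^{1/n} \le a^{1/n} + b^{1/n}$ for $a,b \ge 0$ gives
\[
\big(\mathbb{E}[|X_i|^n]\big)^{1/n} \;\le\; C_n^{1/n}\big(|\mu_i| + \sqrt{s_{ii}}\,\big) \;\le\; 2 C_n^{1/n} \max\{|\mu_i|, \sqrt{s_{ii}}\}.
\]

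Third, take the product over $i$, bound each factor by the overall maximum, and use monotonicity of $x \mapsto x^n$ on $[0,\infty)$ to rewrite $\big(\max_i \max\{|\mu_i|, \sqrt{s_{ii}}\}\big)^n = \max_i \max\{|\mu_i|^n, s_{ii}^{n/2}\}$. This gives the claimed inequality with $c(n) = 2^n C_n$.

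There is no serious obstacle; the argument is purely routine. The only minor point to handle is the degenerate case when some $s_{ii} = 0$, where $X_i = \mu_i$ almost surely and the corresponding factor reduces to $|\mu_i|$, so the bound holds trivially (and Hölder's inequality can be applied to the remaining non-degenerate coordinates). Note that the paper's remark about compactness of normalised covariance matrices is not required for this lemma; it is presumably invoked only for the neighbouring Lemma \ref{lem:gau}, since here Hölder already decouples the coordinates and reduces everything to a single-variable estimate.
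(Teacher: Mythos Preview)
Your proof is correct and follows the same approach the paper indicates, namely H\"older's inequality to decouple the coordinates followed by a one-dimensional moment bound. Your observation that the compactness of normalised covariance matrices is unnecessary for this particular lemma is also correct; the paper's remark bundles Lemmas~\ref{lem:prod} and~\ref{lem:gau} together, and compactness is only genuinely needed for the latter.
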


\begin{lemma}[Bound on the maximum of a Gaussian vector under conditioning]
\label{lem:gau}
Let $n \in \mathbb{N}$ and $\ell \in \mathbb{R}$. Then there exists a $c := c(n, \ell) > 0$ such that, for each centred Gaussian vector $(X, Y)$ of dimension $n+1$ with covariance $\Sigma := (s_{ij})_{1 \le i,j \le n+1}$, it holds that, for each $\delta > 0$,
\[   \mathbb{E}\left[  \max_{1 \le i \le n} |X_i|^2  \id_{\{|Y - \ell| < \delta\}}  \right] <  c \, \delta \max_{1 \le i \le n} s_{ii}.\]
\end{lemma}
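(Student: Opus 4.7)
The plan is to apply Gaussian regression (Proposition \ref{prop:cond}) to the pair $(X, Y)$ and then integrate the resulting conditional second moment of $X_i$ against the Gaussian density of $Y$ restricted to the interval $(\ell - \delta, \ell + \delta)$.

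First I would bound $\max_{1 \le i \le n} |X_i|^2 \le \sum_{i=1}^n X_i^2$, so that, up to the loss of a factor of $n$, it suffices to show $\mathbb{E}[X_i^2 \id_{|Y - \ell| < \delta}] \le c\,\delta\, s_{ii}$ for each fixed index $i$. Writing $\sigma^2 := s_{n+1,n+1}$, Proposition \ref{prop:cond} gives that conditionally on $Y = y$ the variable $X_i$ is Gaussian with mean $(s_{i,n+1}/\sigma^2)\,y$ and variance at most $s_{ii}$; combining this with the Cauchy--Schwarz inequality $s_{i,n+1}^2 \le s_{ii}\sigma^2$ (which holds because the covariance matrix is positive semi-definite) yields
\[ \mathbb{E}[X_i^2 \mid Y = y] \;\le\; s_{ii}\bigl(1 + y^2/\sigma^2\bigr). \]
Multiplying by the density $f_Y(y) = (2\pi\sigma^2)^{-1/2} e^{-y^2/(2\sigma^2)}$ and integrating over $y \in (\ell - \delta, \ell + \delta)$ bounds $\mathbb{E}[X_i^2 \id_{|Y-\ell|<\delta}]$ by $2\delta\, s_{ii}$ times the supremum of $(1 + y^2/\sigma^2)\, f_Y(y)$ on this interval. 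Under the implicit non-degeneracy assumption that $\sigma$ is bounded below --- which holds with $\sigma = 1$ in the paper's applications, since $Y$ arises as a point value of the normalised field $\Psi$ --- the crude bound $|y| \le |\ell| + \delta$ shows that this supremum is controlled by a constant depending only on $\ell$, which delivers the desired $c(n, \ell)\,\delta \max_i s_{ii}$ estimate.

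The main obstacle is that, read literally, the statement appears to fail if $\mathrm{Var}(Y)$ is allowed to be arbitrarily small: the normalisation $1/\sigma$ in $f_Y$, or equivalently the factor $y^2/\sigma^2$ in the conditional second moment, then produces a blow-up that cannot be absorbed into the constant $c(n, \ell)$. I would therefore read the lemma as implicitly assuming a uniform lower bound on $\mathrm{Var}(Y)$, which is consistent with its intended role in the paper (where it is applied to pointwise evaluations of the normalised Gaussian field $\Psi$, so that $\sigma = 1$) and with the stated form of the constant.
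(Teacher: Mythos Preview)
Your argument is correct and more explicit than what the paper offers. The paper does not actually prove this lemma; it merely remarks that Lemmas~\ref{lem:prod} and~\ref{lem:gau} ``follow easily from H\"older's inequality and the compactness of the set of normalised covariances matrices''. Your route via Gaussian regression is more direct: it produces an explicit bound and makes transparent exactly where the uniformity in $\Sigma$ comes from (namely the Cauchy--Schwarz inequality $s_{i,n+1}^2 \le s_{ii}\sigma^2$), whereas a compactness argument is non-constructive and in any case presupposes the very same normalisation of $Y$ that you flag.

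One small slip: the bound $|y| \le |\ell| + \delta$ does not by itself give a supremum depending only on $\ell$, since $(|\ell|+\delta)^2$ still involves $\delta$. The clean fix is to observe that $z \mapsto (1+z^2)e^{-z^2/2}$ is globally bounded, so $(1+y^2/\sigma^2)f_Y(y) \le C_0/\sigma$ for a universal constant~$C_0$; alternatively split into $\delta \le 1$ and $\delta > 1$, handling the latter trivially via $\mathbb{E}[X_i^2] = s_{ii} \le \delta\, s_{ii}$.

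Your diagnosis of the degeneracy issue is also correct and worth recording: taking $n=1$, $X_1 = Y$, $\ell = 0$ and $\mathrm{Var}(Y) = \sigma^2 \ll \delta$ gives $\mathbb{E}[X_1^2\,\id_{\{|Y|<\delta\}}] \sim \sigma^2$, while the claimed bound is $c\,\delta\,\sigma^2$, which fails as $\delta \to 0$. The lemma is invoked only once in the paper (in the proof of Proposition~\ref{prop:krmax}), with $Y = \Psi(0)$ of unit variance, so the implicit hypothesis $s_{n+1,n+1} = 1$ is harmless there; the paper's compactness sketch tacitly relies on the same normalisation.
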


\subsection{Gaussian fields and their derivatives}
\label{sec:gaussfields}
We now state some properties of Gaussian fields and their derivatives.  Let $\Psi : \mathbb{R}^2 \to \mathbb{R}$ be a stationary, normalised Gaussian field with correlation kernel $\kappa: \mathbb{R}^2 \to [-1, 1]$. The following lemmas are standard; see \cite{Adler, AW}.

\begin{lemma}[Smoothness and non-degeneracy]
\label{lem:reg}
The following hold:
\begin{enumerate}
\item Suppose that $\kappa$ is $C^{2k}$ at the origin for $k \in \mathbb{N}$. Then almost surely $\Psi$ is $k$-times differentiable and $(k-1)$-times continuously differentiable. 
\item Suppose that $\kappa$ is $C^{2k}$ at the origin and, for unit vectors $v_1, \ldots v_k \in (S^1)^k$,
\[ (-1)^{k} \kappa_{v_1 v_1 \ldots v_k v_k}(0) > 0 .  \]
Then $\Psi_{v_1 \ldots v_k}(0)$ has a non-degenerate Gaussian distribution.
\item Suppose that $\kappa$ satisfies Assumption \ref{assumpt:degen}. Then, for any $s \in \mathbb{R}^2$, $\nabla \Psi(s)$ has a non-degenerate Gaussian distribution.
\item Fix a level $\ell \in \mathbb{R}$ and suppose that $\kappa$ satisfies Assumption \ref{assumpt:degen}. Then almost surely the components of the level set $\mathcal{N}_\ell$ are simple closed curves.
\end{enumerate}
\end{lemma}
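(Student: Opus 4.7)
The statements are standard, and the plan is to extract each from classical results on smoothness and Kac--Rice-type arguments for Gaussian fields (essentially Chapters 1--2 of \cite{Adler} and Chapters 1--3 of \cite{AW}), together with the elementary identity that, for a centred stationary Gaussian field $\Psi$ with correlation $\kappa$ and multi-indices $\alpha,\beta$,
\[
\mathbb{E}\bigl[\partial^{\alpha}\Psi(s)\,\partial^{\beta}\Psi(t)\bigr]
= (-1)^{|\alpha|}\,\partial^{\alpha+\beta}\kappa(s-t)
\]
whenever the relevant derivatives of $\kappa$ exist.

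For (1) I would first observe that $C^{2k}$-regularity of $\kappa$ at the origin combined with positive-definiteness propagates $C^{2k}$-regularity to every point (by a standard Cauchy--Schwarz argument), so mean-square partial derivatives of $\Psi$ up to order $k$ exist everywhere, with covariance kernels obtained by differentiating $\kappa$. A Kolmogorov continuity criterion applied to each $k$-th order derivative then upgrades mean-square existence to an almost surely $(k-1)$-times continuously differentiable modification whose $k$-th derivatives exist a.s., which is exactly the claim.

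For (2) the random variable $\Psi_{v_1\cdots v_k}(0)$ is a centred Gaussian with variance $(-1)^k\kappa_{v_1v_1\cdots v_kv_k}(0)$ by the identity above, and the hypothesis makes this strictly positive. Part (3) is the special case $k=1$: the covariance matrix of $\nabla\Psi(s)$ is translation invariant and equals $-\mathrm{Hess}\,\kappa(0)$, which Assumption \ref{assumpt:degen}(2) forces to be strictly positive definite (apply the hypothesis to unit vectors lying in arbitrary directions, or equivalently note that the quadratic form $v\mapsto -\kappa_{vv}(0)$ is continuous, strictly positive on $S^1$, and hence bounded below).

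For (4) I would combine (1), (3) with a Bulinskaya-type argument to rule out critical points on the level. Since by (1) the field is almost surely $C^2$, the set of critical values is, by Sard's theorem, a null set almost surely; but this does not immediately exclude $\ell$. The right route is Bulinskaya's lemma (see \cite[Prop.~6.12]{AW}): because $(\Psi(s),\nabla\Psi(s))$ has a bounded joint density by (3), the expected number of points in any bounded domain at which both $\Psi=\ell$ and $\nabla\Psi=0$ is zero, so almost surely $\nabla\Psi\neq 0$ on $\mathcal{N}_\ell$. The implicit function theorem then makes $\mathcal{N}_\ell$ a $C^1$-submanifold of $\mathbb{R}^2$ without boundary; since it is also closed in $\mathbb{R}^2$, each connected component is a one-dimensional $C^1$ manifold without boundary, hence a simple closed curve or a properly embedded line. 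The latter is ruled out on bounded scales by the same Bulinskaya argument, leaving only simple closed curves. The main (and essentially the only) subtle point is the Bulinskaya step: it requires us to know both that $\Psi$ is $C^2$ (from (1)) and that the joint law of $(\Psi,\nabla\Psi)$ has a density (from (3)), after which the remaining topology is routine.
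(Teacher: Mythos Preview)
The paper does not actually prove this lemma; it simply labels the result as standard and cites \cite{Adler, AW}. Your sketch for parts (1)--(3) is exactly the content of those references, so there is nothing to compare: you have written out what the paper leaves implicit, and correctly so.

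For part (4), however, your final sentence contains a genuine error. Bulinskaya's lemma gives you that $\nabla\Psi\neq 0$ on $\mathcal{N}_\ell$ almost surely, and hence that $\mathcal{N}_\ell$ is a $1$-dimensional $C^1$ submanifold without boundary; each component is therefore either a simple closed curve or a properly embedded copy of $\mathbb{R}$. But Bulinskaya says nothing about ruling out the second possibility, and your phrase ``ruled out on bounded scales by the same Bulinskaya argument'' does not correspond to any valid step. Indeed, the paper itself remarks in the introduction that boundedness of level lines is an open problem in general and is only known for the nodal set of positively-correlated fields via Alexander's result. So either the lemma's statement should be read as ``simple curves (possibly unbounded)'', or it is slightly imprecise; in any case you should not claim to have excluded unbounded components. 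The correct conclusion of the Bulinskaya step is just that $\mathcal{N}_\ell$ is a smooth embedded $1$-manifold, and you should stop there.
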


\begin{lemma}[Derivatives of different parity are independent]
\label{lem:oddeven}
Suppose that $\kappa$ is $C^{2k}$ at the origin. Then, for any $m \in 2\mathbb{N}$ and $n \in 2\mathbb{N} + 1$ such that $m+n \le k$, and any unit vectors $v_1, \ldots , v_m, w_1, \ldots w_n \in (S^1)^{m+n}$,
\[  \mathbb{E}\left[ \Psi_{v_1\ldots v_m}(0)  \Psi_{w_1 \ldots w_n}(0) \right]  = 0.  \]
If $m=0$, the above result holds with the convention $\Psi_{v_1 \ldots v_m}(0) := \Psi(0)$.
\end{lemma}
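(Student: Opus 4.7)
The plan is to express the expectation $\mathbb{E}[\Psi_{v_1\ldots v_m}(0)\,\Psi_{w_1\ldots w_n}(0)]$ in terms of a single partial derivative of the correlation kernel $\kappa$ at the origin, and then to exploit the evenness of $\kappa$ to conclude that this derivative vanishes whenever its total order is odd.

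First I would differentiate the stationarity relation $\mathbb{E}[\Psi(s)\Psi(t)]=\kappa(s-t)$ repeatedly in $s$ and $t$. Under the $C^{2k}$ assumption on $\kappa$, together with the hypothesis $m+n \le k$, Lemma \ref{lem:reg}(1) and standard Gaussian theory (see \cite{Adler, AW}) justify differentiating both sides $m$ times in $s$ and $n$ times in $t$, yielding
\[
\mathbb{E}\bigl[\Psi_{v_1\ldots v_m}(s)\,\Psi_{w_1\ldots w_n}(t)\bigr] \;=\; (-1)^n\,\kappa_{v_1\ldots v_m w_1\ldots w_n}(s-t).
\]
The factor $(-1)^n$ arises because each directional derivative with respect to $t$, applied to $\kappa(s-t)$, contributes a minus sign via the chain rule. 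Specialising to $s=t=0$ reduces the lemma to showing that $\kappa_{v_1\ldots v_m w_1\ldots w_n}(0)=0$.

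Next I would invoke the evenness of $\kappa$. Since $\mathbb{E}[\Psi(s)\Psi(t)]=\mathbb{E}[\Psi(t)\Psi(s)]$, we have $\kappa(x)=\kappa(-x)$ for every $x\in\mathbb{R}^2$. Differentiating this identity a total of $r:=m+n$ times in any combination of directions and evaluating at the origin gives the relation $\partial^{r}\kappa(0)=(-1)^{r}\,\partial^{r}\kappa(0)$, so every partial derivative of $\kappa$ of odd total order vanishes at $0$. Because $m\in 2\mathbb{N}$ and $n\in 2\mathbb{N}+1$, the total order $m+n$ is odd, and the desired conclusion follows. The boundary case $m=0$ requires no modification, as the identity above reduces to $\mathbb{E}[\Psi(s)\Psi_{w_1\ldots w_n}(t)]=(-1)^{n}\kappa_{w_1\ldots w_n}(s-t)$, and $n$ is odd.

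I do not expect any serious obstacle: the statement is essentially an encoding of the well-known fact that, for a stationary Gaussian field with even correlation kernel, derivatives of opposite parity at a single point are uncorrelated, and hence (by joint Gaussianity) independent. The only technical care required is the routine justification for exchanging differentiation with expectation, which is a standard consequence of mean-square differentiability of $\Psi$ under Assumption \ref{assumpt:degen} and the hypothesis $m+n\le k$.
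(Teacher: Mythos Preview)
Your proof is correct and is precisely the standard argument. The paper does not give its own proof of this lemma; it simply states that the result is standard and refers the reader to \cite{Adler, AW}, so your argument is exactly what the paper implicitly invokes.
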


\begin{lemma}[Covariances between the field and its derivatives]
\label{lem:cov}
Let $s \in \mathbb{R}$ be a point and $v \in S^1$ a unit vector. Suppose that $\kappa$ is $C^2$ at the origin and $C^1$ at $s$. Then,
\[  \mathbb{E}\left[ \Psi(s) \Psi_v(0) \right]  = -\kappa_v(s)    \qquad \text{and} \qquad    \mathbb{E}\left[\Psi_v(0) \Psi_v(0) \right]  = -\kappa_{vv}(0) . \]
\end{lemma}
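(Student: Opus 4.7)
The plan is to start from the defining identity $\kappa(s-t) = \mathbb{E}[\Psi(s)\Psi(t)]$ and differentiate both sides in the direction $v$, being careful to justify the exchange of expectation and derivative and to track the sign picked up from the chain rule.

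First, I would fix $s$ and consider the map $t \mapsto \kappa(s-t)$. For a parameter $h \in \mathbb{R}$, form the difference quotient
\[
\frac{\Psi(t+hv) - \Psi(t)}{h},
\]
which is a Gaussian random variable with variance computable from $\kappa$. Under the assumption that $\kappa$ is $C^2$ at the origin, a direct calculation shows that as $h \to 0$ this difference quotient converges in $L^2(\mathbb{P})$ to a Gaussian random variable, which one identifies with the directional derivative $\Psi_v(t)$ (this is the standard mean-square differentiability for Gaussian fields; it is already implicit in Lemma \ref{lem:reg}(1), and a more quantitative version is used to justify Lemma \ref{lem:reg}(2)). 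Then, multiplying by $\Psi(s)$ and taking expectations, Cauchy–Schwarz justifies passing to the limit on the right:
\[
\mathbb{E}[\Psi(s)\Psi_v(t)] = \lim_{h \to 0} \frac{\mathbb{E}[\Psi(s)\Psi(t+hv)] - \mathbb{E}[\Psi(s)\Psi(t)]}{h} = \lim_{h \to 0} \frac{\kappa(s-t-hv) - \kappa(s-t)}{h}.
\]
By the chain rule and the $C^1$-assumption on $\kappa$ at $s-t$, the right-hand side equals $-\kappa_v(s-t)$. Setting $t = 0$ gives the first identity $\mathbb{E}[\Psi(s)\Psi_v(0)] = -\kappa_v(s)$.

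For the second identity I would repeat the argument once more, this time differentiating in $s$. Applying the same $L^2$ convergence statement to the first argument of the kernel yields
\[
\mathbb{E}[\Psi_v(s)\Psi_v(t)] = \lim_{h \to 0} \frac{-\kappa_v(s+hv-t) + \kappa_v(s-t)}{h} = -\kappa_{vv}(s-t),
\]
where now the $C^2$-assumption on $\kappa$ at the origin is needed to ensure that $\kappa_v$ is differentiable there. Evaluating at $s=t=0$ gives $\mathbb{E}[\Psi_v(0)^2] = -\kappa_{vv}(0)$.

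The only non-routine step is justifying that the directional derivative can be pulled out of the expectation, i.e.\ the mean-square differentiability of $\Psi$ together with Cauchy–Schwarz; everything else is elementary calculus on $\kappa$. Since the assumption is stated pointwise ($\kappa$ being $C^2$ at the origin and $C^1$ at $s$) rather than globally, I would emphasise that the argument above really only uses the difference quotient behaviour of $\kappa$ at the two specific points $0$ and $s$, so no global regularity of $\kappa$ is needed. This matches exactly the hypotheses in the statement.
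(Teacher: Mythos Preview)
Your argument is correct and is exactly the standard mean-square differentiation proof one finds in the references the paper cites. Note that the paper does not actually give its own proof of this lemma: it groups Lemmas~\ref{lem:reg}--\ref{lem:derbounds} together under ``The following lemmas are standard; see \cite{Adler, AW}'' and omits all proofs, so there is nothing to compare against beyond the fact that your approach matches the textbook treatment.
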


\begin{lemma}[Derivative bounds]
\label{lem:derbounds}
Suppose that $\kappa$ is $C^{2k}$ at the origin for $k \in \mathbb{N}$. Fix $r > 0$. Then there exists a $c = c(\kappa, k, r) > 0$ such that 
\begin{equation}
\label{eq:der}
 \mathbb{E}[ \|\Psi\|_{C^k(B(r))} ]   < c.
\end{equation}
\end{lemma}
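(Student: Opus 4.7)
The plan is to reduce the $C^k$-norm on $B(r)$ to a finite collection of expected suprema of stationary Gaussian fields, and then apply classical Gaussian supremum bounds to each term. Writing
\[ \|\Psi\|_{C^k(B(r))} = \max_{|\alpha| \le k} \sup_{s \in B(r)} |\partial^\alpha \Psi(s)|, \]
and bounding the maximum by the sum over the finitely many multi-indices $\alpha$ with $|\alpha|\le k$, it suffices to show that $\mathbb{E}[\sup_{s \in B(r)} |\partial^\alpha \Psi(s)|] < \infty$ for each such $\alpha$. By Lemma~\ref{lem:reg}(1) applied to $\partial^\alpha\Psi$, each derivative field is a centred stationary Gaussian field, and by standard differentiation under the expectation its covariance equals $(-1)^{|\alpha|}\partial^{2\alpha}\kappa(s-t)$; the hypothesis $\kappa \in C^{2k}$ at the origin (which by stationarity forces $\kappa \in C^{2k}$ on all of $\mathbb{R}^2$) ensures this covariance is well-defined and continuous.

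Next, for each lower-order multi-index $|\alpha| \le k-1$, the field $\partial^\alpha \Psi$ is almost surely continuously differentiable on $B(r)$, so the mean-value bound
\[ \sup_{s \in B(r)} |\partial^\alpha \Psi(s)| \le |\partial^\alpha \Psi(0)| + r \sup_{s \in B(r)} \|\nabla \partial^\alpha \Psi(s)\| \]
reduces the problem inductively to (i) bounding the expected supremum of top-order ($|\alpha|=k$) derivative fields, and (ii) controlling a finite collection of centred Gaussian random variables of finite variance at the origin, which are trivially integrable. For the top-order case, I would invoke a classical Gaussian supremum bound: since $\partial^\alpha\Psi$ is a centred stationary Gaussian process on the compact set $B(r)$ with continuous covariance, Dudley's entropy inequality (or Borell--TIS combined with the standard chaining bound, as developed in~\cite{Adler,AW}) yields $\mathbb{E}[\sup_{s \in B(r)} |\partial^\alpha\Psi(s)|] < \infty$.

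The main obstacle is verifying the regularity at the top order, namely that the covariance $\partial^{2\alpha}\kappa$ for $|\alpha|=k$ has a strong enough modulus of continuity at the origin for the Dudley integral to converge and for the process to admit a continuous modification on $B(r)$. This is the borderline case of the $C^{2k}$ hypothesis, and it is handled by exploiting the fact that $\partial^{2\alpha}\kappa$ attains its global maximum at the origin (so its gradient there vanishes), which together with the $C^{2k}$ smoothness extracts a quantitative rate of decay for the increment variance of $\partial^\alpha\Psi$ that suffices for Dudley to converge. Rather than redoing this chaining argument in detail, I would appeal directly to the standard Gaussian supremum estimates in~\cite{Adler,AW}, as is done throughout the paper for the other Gaussian estimates collected in this section.
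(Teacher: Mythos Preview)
The paper does not actually prove this lemma: it is listed among a block of results introduced with ``The following lemmas are standard; see \cite{Adler, AW}'' and no argument is given. Your sketch is therefore already more detailed than what appears in the paper, and the overall route you outline---decompose the $C^k$ norm into suprema of the stationary derivative fields $\partial^\alpha\Psi$, then invoke classical Gaussian supremum bounds (Dudley entropy / Borell--TIS)---is indeed the standard one that the cited references develop.

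One genuine subtlety worth flagging in your write-up: at the top order $|\alpha|=k$, your claimed mechanism (``$\partial^{2\alpha}\kappa$ attains its maximum at the origin, so its gradient vanishes'') does not literally apply, since with only $\kappa\in C^{2k}$ the function $\partial^{2\alpha}\kappa$ is merely continuous and need not be differentiable. Positive-definiteness does force the maximum at the origin, but that alone gives no rate for the increment variance, and Dudley's integral needs a quantitative modulus. This is a borderline case, and in fact Lemma~\ref{lem:reg}(1) only asserts that $\Psi$ is $(k-1)$-times \emph{continuously} differentiable under the $C^{2k}$ hypothesis. In the paper's applications the issue never bites (Assumption~\ref{assumpt:degen} gives $\kappa\in C^6$ while only $\|\Psi\|_{C^2}$ or $\|\Psi\|_{C^3}$ is ever needed), so there is always slack; but if you want the lemma exactly as stated you should either tighten the hypothesis to $\kappa\in C^{2k+\delta}$, or simply do as the paper does and defer the borderline analysis to the standard references.
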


The following proposition is less standard; we give a full proof in Appendix~\ref{appendix1}. Note that in many important cases -- e.g.\ isotropic $\kappa$ or $\kappa \in L_1(\mathbb{R}^2)$ -- the proposition follows immediately from the resulting strict positive-definiteness of $\kappa$ (see \cite[Theorem 3.8]{Sun93} and \cite[Theorem 6.11]{Wendland} respectively, where for the latter also note that, under Assumption \ref{assumpt:degen}, $\kappa$ is continuous).

\begin{proposition}[Local three-point non-degeneracy]
\label{prop:three}
Suppose that $\kappa$ satisfies Assumption~\ref{assumpt:degen}. Then there exists a $\delta > 0$ such that, for any distinct $s = (s_i)_{1 \le i \le 3} \in (\mathbb{R}^2)^3$ satisfying $\max_{i,j} |s_i - s_j| < \delta$ and that are not co-linear, the distribution of $\Psi(s)$ is non-degenerate.
\end{proposition}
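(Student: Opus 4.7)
The plan is to show that the $3 \times 3$ covariance matrix $\Sigma$ with entries $\Sigma_{ij} = \kappa(s_i - s_j)$ is strictly positive definite. By stationarity I translate so $s_1 = 0$ and write $a = s_2$, $b = s_3$; the non-collinearity hypothesis is then exactly that $a, b \in \mathbb{R}^2$ are linearly independent. Non-degeneracy of $(\Psi(s_1), \Psi(s_2), \Psi(s_3))$ is equivalent to their linear independence in $L^2$, which by a unit-Jacobian change of basis is equivalent to linear independence of the divided differences $Y_1 := \Psi(0)$, $Y_2 := \Psi(a) - \Psi(0)$, $Y_3 := \Psi(b) - \Psi(0)$. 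Letting $\Sigma_Y$ denote the Gram matrix of the $Y_i$'s, the goal is to show $\det \Sigma_Y > 0$.

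First I Taylor-expand each entry of $\Sigma_Y$ at the origin. By Assumption~\ref{assumpt:degen}, the matrix $\tilde K := -\nabla^2 \kappa(0)$ is positive definite, and the cubic Taylor term of $\kappa$ at $0$ vanishes because Lemma~\ref{lem:oddeven} (applied with $m=0$, $n=3$, together with Lemma~\ref{lem:cov}) forces $\kappa_{ijk}(0) = 0$. Hence $\kappa(h) = 1 - \tfrac{1}{2} h^T \tilde K h + O(|h|^4)$; writing $q_x := \tfrac{1}{2} x^T \tilde K x$ and $r := a^T \tilde K b$, a direct calculation gives
\[
\Sigma_Y = \begin{pmatrix} 1 & -q_a & -q_b \\ -q_a & 2 q_a & r \\ -q_b & r & 2 q_b \end{pmatrix} + O(|h|^4)
\]
entrywise. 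Expanding the determinant and applying Lagrange's identity for the form $\tilde K$ yields
\[
\det \Sigma_Y = (a^T \tilde K a)(b^T \tilde K b) - (a^T \tilde K b)^2 + O(|h|^6) = (\det[a \mid b])^2 \det \tilde K + O(|h|^6),
\]
so the leading contribution equals $4 \cdot \mathrm{area}(s_1, s_2, s_3)^2 \cdot \det \tilde K$, strictly positive since $\det \tilde K > 0$ and $\mathrm{area} > 0$ by non-collinearity.

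The hard part is ensuring this leading positivity persists uniformly in the shape of the triangle, so that one $\delta$ works for every non-collinear triple with diameter below it. For triangles with $\mathrm{area}(s) \gtrsim |h|^3$, the leading term dominates the $O(|h|^6)$ error and $\det \Sigma > 0$ follows. The main obstacle is very thin triangles with $\mathrm{area}(s) = o(|h|^3)$. To handle these I would argue by contradiction: suppose no $\delta$ works, so there is a sequence of distinct non-collinear triples $(0, s_2^n, s_3^n)$ with diameters $\epsilon_n \to 0$ and normalised coefficient vectors $\alpha^n \in S^2$ satisfying $\sum_i \alpha^n_i \Psi(s^n_i) = 0$ almost surely. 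Taking $L^2$ inner products of this identity against $\Psi(0)$ and against $\Psi_v(0)$ for each direction $v$, and invoking Lemma~\ref{lem:cov} together with the above Taylor expansions (using crucially that $\kappa_{vij}(0) = 0$ by parity, so $\kappa_v(s) = -s^T \tilde K v + O(|s|^3)$), yields $|\sum_i \alpha^n_i| = O(\epsilon_n^2)$ and $|\sum_i \alpha^n_i s^n_i| = O(\epsilon_n^3)$, the latter via invertibility of $\tilde K$. Inverting the associated $3 \times 3$ linear system whose determinant is $2 \cdot \mathrm{area}(s^n)$ forces $\|\alpha^n\| = O(\epsilon_n^4/\mathrm{area}(s^n))$, contradicting $\|\alpha^n\| = 1$ except when $\mathrm{area}(s^n)$ decays at rate $\epsilon_n^4$ or faster. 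For those remaining shapes, iterating the procedure with inner products against the Hessian components $\Psi_{vw}(0)$ gives further constraints on the symmetric tensor $\sum_i \alpha^n_i s^n_i (s^n_i)^T$; invertibility of the ensuing linear maps on $2 \times 2$ symmetric tensors (again coming from Assumption~\ref{assumpt:degen}(2) via Lagrange) propagates the contradiction, forcing $\|\alpha^n\| \to 0$.
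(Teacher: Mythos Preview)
Your Taylor-expansion calculation is correct as far as it goes: after the unimodular change of basis, $\det\Sigma_Y = (\det[a\mid b])^2\det\tilde K + O(|h|^6)$ with a uniform implied constant, so positivity follows whenever the triangle has area $\gtrsim|h|^3$, and the contradiction argument testing against $\Psi(0)$ and $\nabla\Psi(0)$ correctly extends this to area $\gtrsim|h|^4$. The gap is in the last paragraph. Testing against $\Psi_{vw}(0)$ gives $\sum_i\alpha_i\kappa_{vw}(s_i)=0$, i.e.\ $\kappa_{vw}(0)\sum_i\alpha_i+\tfrac12\sum_i\alpha_i\,s_i^T(\nabla^2\kappa_{vw}(0))s_i=O(|h|^4)$; the linear map on symmetric $2\times2$ tensors whose invertibility you invoke is built from the \emph{fourth} derivatives $\nabla^2\kappa_{vw}(0)$, and Assumption~\ref{assumpt:degen} says nothing about these---your appeal to ``Assumption~\ref{assumpt:degen}(2) via Lagrange'' is not justified. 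Even granting that invertibility, the resulting bound $\|\sum_i\alpha_i s_i s_i^T\|=O(|h|^2)$ is no sharper, relative to its natural scale, than $|\sum_i\alpha_i|=O(|h|^2)$, so it does not push the area threshold further; and in any case $\kappa$ is only $C^6$, so you cannot keep iterating to cover triangles with area $\asymp|h|^k$ for arbitrarily large $k$. The approach stalls precisely in the thin-triangle regime that you yourself flagged as the hard part.

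The paper takes a completely different route that sidesteps shape-uniformity altogether. If $(\Psi(s_i))_i$ is degenerate, then the support of the spectral measure $\rho$ lies in the zero set of $g_s(\mu)=\sum_j c_j e^{2\pi i\langle s_j,\mu\rangle}$. Since Assumption~\ref{assumpt:degen} forces $\rho$ not to be supported on a line, after a linear change of coordinates one may assume $\{(\pm1,0),(0,\pm1)\}\subseteq\mathrm{supp}(\rho)$, whence the explicit field $\hat\Psi(x,y)=X_1\cos x+X_2\sin x+X_3\cos y+X_4\sin y$ is also degenerate at $s$. For $\hat\Psi$ one then checks by direct linear algebra that $(\hat\Psi(0),\hat\Psi(s_2),\hat\Psi(s_3))$ is non-degenerate for \emph{every} non-collinear pair $s_2,s_3$ in a fixed square---no smallness and no aspect-ratio control are needed. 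In effect the paper exploits the identity $\sum_i\alpha_i\kappa(s_i-t)\equiv0$ globally via the Fourier side, whereas your argument uses only finitely many of its Taylor coefficients at $t=0$.
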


\subsection{Small ball estimate for derivatives of the random plane wave}
\label{sec:small}
To close this section, we give a small ball estimate for the first and second derivatives of the random plane wave; this will be necessary in completing the proof of Theorem \ref{thm:main3}.

Let $\Psi: \mathbb{R}^2 \to \mathbb{R}$ be the random plane wave, i.e.\ the stationary, normalised Gaussian field whose correlation kernel satisfies \eqref{eq:rpw} for frequency parameter $k > 0$. Let $\lambda_1$ and $\lambda_2$ denote the eigenvalues of the Hessian matrix $\nabla^2 \Psi(0)$. The following proposition essentially expresses the order-one repulsion between the eigenvalues $\lambda_1$ and $\lambda_2$; the existence of this repulsion is not restricted to the plane wave (it holds for any sufficiently smooth isotropic Gaussian field) but we state it only in this case. We give a full proof in Appendix \ref{appendix1}.

\begin{proposition}[Small ball estimate for derivatives of the random plane wave]
\label{prop:smalldevest}
There exists a $c = c(k) > 0$ such that, for each $\delta_1, \delta_2, \delta_3 > 0$,
\[ \mathbb{P} \left(  |\nabla \Psi(0)| < \delta_1, \max\{ |\lambda_1| , |\lambda_2| \} < \delta_2 , |\lambda_1 + \lambda_2| < \delta_3  \right)  < c \delta_1 \delta_2^3 \delta_3 .\]
\end{proposition}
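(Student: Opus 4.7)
The plan is to leverage three features of the random plane wave: the parity-based independence of $\nabla\Psi(0)$ from $(\Psi(0), \nabla^2\Psi(0))$ (Lemma \ref{lem:oddeven}); the Helmholtz equation \eqref{eq:helm}, which gives the almost-sure identity $\lambda_1+\lambda_2 = \Delta\Psi(0) = -k^2\Psi(0)$; and the isotropy of $\kappa$, which decouples the traceless part of the Hessian from $\Psi(0)$ itself. First, by Lemma \ref{lem:oddeven}, $\nabla\Psi(0)$ is independent of $(\Psi(0),\nabla^2\Psi(0))$, and since $\nabla\Psi(0)$ is a non-degenerate centred $2$-dimensional Gaussian (Lemma \ref{lem:reg}(3)), a standard bounded-density argument yields $\mathbb{P}(|\nabla\Psi(0)|<\delta_1) \le c\delta_1^2 \le c\delta_1$ (the second inequality being trivial for $\delta_1\ge 1$). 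It therefore suffices to bound the probability of the joint event on $(\Psi(0),\nabla^2\Psi(0))$.

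To this end, I would reparametrise the Hessian via $T := \operatorname{tr}\nabla^2\Psi(0)$, $D := \Psi_{xx}(0)-\Psi_{yy}(0)$ and $M := 2\Psi_{xy}(0)$, so that the eigenvalues are $\lambda_\pm = (T\pm\sqrt{D^2+M^2})/2$. Helmholtz identifies $T = -k^2\Psi(0)$, so the event $|\lambda_1+\lambda_2|<\delta_3$ becomes $|\Psi(0)|<\delta_3/k^2$, while the event $\max\{|\lambda_1|,|\lambda_2|\}<\delta_2$ becomes $|T|+\sqrt{D^2+M^2}<2\delta_2$. A direct calculation using the Taylor expansion $J_0(k|s|) = 1 - k^2|s|^2/4 + k^4|s|^4/64 + O(|s|^6)$ together with the higher-derivative analogue of Lemma \ref{lem:cov} shows that $(\Psi(0), D, M)$ are pairwise uncorrelated Gaussians with non-zero variances depending only on $k$; being jointly Gaussian they are then mutually independent, and their joint density is uniformly bounded.

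Finally, I would change variables to eigenvalue coordinates $(\lambda_+,\lambda_-,\phi)$, where $\phi$ is the polar angle of $(D,M)$; the polar and diagonalisation Jacobians combine to produce a factor $|\lambda_+-\lambda_-|$ in the density, which encodes the eigenvalue repulsion. Integrating out $\phi$ and applying the bounded-density estimate reduces the remaining task to bounding
\[
\int_{\{\lambda_\pm\in(-\delta_2,\delta_2),\,|\lambda_++\lambda_-|<\delta_3\}} (\lambda_+-\lambda_-)\,d\lambda_+\,d\lambda_-,
\]
which is elementary: on this region $\lambda_+-\lambda_-\le 2\delta_2$ and the planar area is of order $\delta_2\min(\delta_2,\delta_3)$, so pulling the repulsion factor out of the integral and then combining with the gradient estimate gives the claimed bound. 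The main obstacle I anticipate is the covariance computation establishing the mutual independence of $(\Psi(0),D,M)$: this decoupling is exactly what allows the $5$-dimensional joint law of $(\nabla\Psi(0),\Psi(0),D,M)$ to factorise into independent pieces despite the deterministic Helmholtz constraint, and it relies crucially on both the specific Bessel form of $\kappa$ at the origin and the rotational invariance ensuring that $\operatorname{Cov}(\Psi, D)=\operatorname{Cov}(\Psi, M)=\operatorname{Cov}(D,M)=0$.
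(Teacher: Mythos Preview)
Your approach is essentially the paper's: factor out $\nabla\Psi(0)$ by parity-independence, then exploit the eigenvalue repulsion factor $|\lambda_1-\lambda_2|$ in the Hessian density and integrate over the constraint region. The only methodological difference is that you derive this density by hand---computing that $(\Psi(0),D,M)$ are independent Gaussians and changing to eigenvalue coordinates---whereas the paper simply cites the known GOE-type eigenvalue density for the Hessian of an isotropic Gaussian field from the random-matrix literature; your route is more self-contained, and your invocation of the Helmholtz equation to replace $T$ by $-k^2\Psi(0)$ is correct but unnecessary, since the independence of $(T,D,M)$ follows from isotropy alone.

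One caveat worth flagging: your final arithmetic (pulling out $\lambda_+-\lambda_-\le 2\delta_2$ and bounding the planar area by $O(\delta_2\delta_3)$), like the paper's own proof, actually delivers the bound $c\,\delta_1^2\delta_2^2\delta_3$, not the stated $c\,\delta_1\delta_2^3\delta_3$. The stated exponents appear to be a typo---the bound $c\,\delta_1\delta_2^3\delta_3$ in fact fails when $\delta_2=\delta_3\to 0$ with $\delta_1$ fixed---and the sole application in Proposition~\ref{prop:conie} only requires the $\varepsilon^6$ scaling, which both versions provide.
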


%%%%%%%%%%%%%%%%
\smallskip

\section{Kac-Rice arguments}
\label{sec:kacrice}

In this section we develop several Kac-Rice arguments which underpin the validity of our discretisation schemes. Throughout this section let $\Psi : \mathbb{R}^2 \to \mathbb{R}$ be a stationary, normalised Gaussian field with correlation kernel $\kappa : \mathbb{R}^2 \to [-1, 1]$ satisfying Assumption \ref{assumpt:degen}, and let $\ell \in \mathbb{R}$ be a fixed level. By Lemma \ref{lem:reg}, this implies that the components of the level set~$\mathcal{N}_\ell$ are almost surely simple closed curves, and also that the distribution of $\nabla \Psi(s)$ is non-degenerate for any $s \in \mathbb{R}^2$.

For $k \in \mathbb{N}$ and a collection of points $s = (s_1, \ldots, s_k) \in (\mathbb{R}^2)^k$, let $\theta^-(s)$ and $\theta^+(s)$ denote, respectively, the size of the smallest and largest interior angle among all possible triangles formed by points in $s$; if $k \le 2$ or the points in $s$ are not distinct, set $\theta^-(s) := 0$ and $\theta^+(s) := \pi$. The variable $\theta^-(s)$ gives a quantitative measure of the degeneracy of $s$, and in addition $\theta^+(s)$ identifies degeneracy that is due to co-linearity.
\subsection{Kac-Rice formulae}

Kac-Rice formulae are a general way to express expectations involving the zeros of a random process in terms of certain integrals. The exact form of a given Kac-Rice formula will depend on the precise quantity that is desired to be computed. We state two Kac-Rice formulae, one for controlling the intersection of the level set with line segments, and another for controlling the set of critical points; since these are standard, we omit the proof.

\begin{proposition}[One-dimensional Kac-Rice formula]
\label{prop:1dkr}
Fix $k \in \mathbb{N}$ and $\mu^-, \mu^+ \in [0, \pi)$. Let $L := (L_i)_{1 \le i \le k}$ be a set of line segments in $\mathbb{R}^2$. For $1 \le i \le k$, let $v_i \in S^1$ be a unit vector in the direction of $L_i$. Suppose that the distribution of $\Psi(s)$ is non-degenerate for each $s = (s_1, \ldots , s_k) \in L$ with $\{s_i\}_{1 \le i \le k}$ distinct and satisfying $\theta^-(s) \ge \mu^-$ and $\theta^+(s) \le \mu^+$. Define the random variable
\[ N := \left| \left\{  (s_1 \in L_1 \cap \mathcal{N}_\ell, \ldots , s_k \in L_k \cap \mathcal{N}_\ell) :  \{s_i\}_{1 \le i \le k} \text{ distinct, } \theta^-(s) \ge \mu^-,  \theta^+(s) \le \mu^+  \right\}  \right|  . \]
Then
\[  \mathbb{E} \left[ N \right] = \int_{s \in L} \varphi_s(\ell)  \,  \mathbb{E} \left[ \prod_{1 \le i \le k}  \left|  \Psi_{v_i}(s_i) \right| \, \Big|  \, \Psi(s_1) = \ldots = \Psi(s_k) = \ell  \right]  \id_{ \{  \theta^-(s) \ge \mu^-, \, \theta^+(s) \le \mu^+  \} } \, ds, \]
where $\varphi_s(\ell)$ denotes the density at $\ell$ of the $k$-dimensional Gaussian vector $\Psi(s)$.
\end{proposition}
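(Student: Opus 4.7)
\emph{Proof proposal.} The plan is to reduce the statement to a standard $k$-point Kac-Rice formula for level crossings of smooth one-dimensional Gaussian processes (see, e.g., \cite{AW}), after parameterising the product of segments by arc length.

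First I would write $s_i(t_i) = a_i + t_i v_i$ for $t_i \in [0, |L_i|]$, where $a_i$ is an endpoint of $L_i$, and introduce the Gaussian processes $X_i(t_i) := \Psi(s_i(t_i))$. Then $N$ is exactly the number of tuples $(t_1,\ldots,t_k) \in \prod_i [0, |L_i|]$ satisfying $X_i(t_i)=\ell$ for every $i$ and the angle constraint on the image points $s = (s_1(t_1),\ldots,s_k(t_k))$. Since $\kappa$ is $C^6$ at the origin, Lemma \ref{lem:reg} yields that $\Psi$ is almost surely $C^2$, and since $\mathrm{Var}(\Psi_{v_i}(s_i)) = -\kappa_{v_i v_i}(0)$ is bounded below by a positive constant under Assumption \ref{assumpt:degen}, the $X_i$ are smooth nondegenerate processes whose level crossings are isolated and occur with nonzero derivative almost surely.

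The central step is the $k$-point Kac-Rice formula. Let $A$ denote the Borel subset of $\prod_i [0, |L_i|]$ of configurations satisfying the angle constraints $\theta^-(s) \ge \mu^-$ and $\theta^+(s) \le \mu^+$, and let $N_A$ be the number of tuples in $A$ with $X_i(t_i) = \ell$ for each $i$. On $A$ the joint law of $(\Psi(s_1),\ldots,\Psi(s_k))$ is nondegenerate by hypothesis, so the standard Kac-Rice identity for a product of smooth one-dimensional processes yields
\[
\mathbb{E}[N_A] = \int_A \varphi_s(\ell)\,\mathbb{E}\!\left[\,\prod_{i=1}^k |\Psi_{v_i}(s_i)| \,\Big|\, \Psi(s_j)=\ell \ \forall j\right] ds.
\]
Since $\theta^\pm$ are continuous functions of $s$, the right-hand side equals the integral over all of $\prod_i [0, |L_i|]$ with the indicator $\id_{\{\theta^-(s) \ge \mu^-,\, \theta^+(s) \le \mu^+\}}$ inserted, which is exactly the stated identity.

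The main obstacle is justifying the uniform validity of the Kac-Rice formula on $A$. Two ingredients are needed: (i) the joint density $\varphi_s(\ell)$ must be locally bounded on $A$, which follows from the hypothesis of nondegeneracy together with continuity of the covariance and compactness of $A$; and (ii) the $2k$-dimensional Gaussian vector $(\Psi(s_1),\ldots,\Psi(s_k),\Psi_{v_1}(s_1),\ldots,\Psi_{v_k}(s_k))$ must be jointly nondegenerate, which follows from the hypothesis combined with Assumption \ref{assumpt:degen} and a short regression argument using Proposition \ref{prop:cond} (noting that at a single point $\Psi$ and $\Psi_{v_i}$ are uncorrelated by Lemma \ref{lem:oddeven}). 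A partition-of-unity reduction to small boxes where the relevant covariance is uniformly invertible, combined with the classical local Kac-Rice theorem as in \cite[Chapter~6]{AW}, then completes the proof.
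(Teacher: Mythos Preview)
Your proposal is correct and is precisely the standard argument the paper defers to (the paper omits the proof, citing it as a standard Kac--Rice formula from \cite{AW}). One small simplification: your ingredient (ii) on nondegeneracy of the full $2k$-vector of values and derivatives is not actually needed---the Gaussian Kac--Rice theorem requires only nondegeneracy of $(\Psi(s_1),\ldots,\Psi(s_k))$, which is the stated hypothesis---so you may drop it and its sketched justification.
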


\begin{proposition}[Two-dimensional Kac-Rice formula]
\label{prop:2dkr}
Fix $c > 0$ and a bounded domain $D \subseteq \mathbb{R}^2$. Let $\mathcal{C}$ be the set of critical points of $\Psi$, i.e.\ $\mathcal{C} := \{ s \in \mathbb{R}^2 : |\nabla \Psi(s)| = 0 \}$. Define the random variable
\[ N := \left| \left\{  s \in D \cap \mathcal{C} : |\Psi(s) - \ell| < c  \right\}  \right|  . \]
Then
\[  \mathbb{E} \left[ N \right] = \int_{s \in D} \varphi_s(0)  \,  \mathbb{E} \left[  \left| \nabla^2 \Psi (s) \right| \id_{\{|\Psi(s) - \ell| < c\}} \Big|  \left|\nabla \Psi(s) \right| =  0  \right]  \, ds , \]
where $\varphi_s(0)$ denotes the density at zero of the two-dimensional Gaussian vector $\nabla \Psi(s)$.  
\end{proposition}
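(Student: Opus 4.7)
The plan is to derive this as a standard application of the Kac--Rice/co-area technique, weighted by the indicator $\id_{\{|\Psi(s)-\ell|<c\}}$, and reduce everything to Gaussian regression via Proposition~\ref{prop:cond}. Under Assumption~\ref{assumpt:degen}, Lemma~\ref{lem:reg} gives that $\Psi$ is almost surely $C^2$ and that $\nabla\Psi(s)$ has a non-degenerate Gaussian distribution for every $s$. A routine Bulinskaya-type argument, applied to the pair $(\nabla\Psi, \det\nabla^2\Psi)$ and to the pair $(\nabla\Psi, \Psi-\ell\pm c)$, shows that almost surely all critical points of $\Psi$ in $\overline{D}$ are non-degenerate and none lies on the boundary level surfaces $\{\Psi = \ell \pm c\}$; in particular $N$ is a.s.\ finite and the indicator is a.s.\ continuous at each critical point.

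Next I would introduce a smooth approximation of the Dirac mass at the origin of $\mathbb{R}^2$, say $\rho_\eta(y) := (2\pi\eta^2)^{-1}\exp(-|y|^2/(2\eta^2))$, and set
\[
N_\eta := \int_D \rho_\eta(\nabla\Psi(s)) \, |\det \nabla^2\Psi(s)| \, \id_{\{|\Psi(s)-\ell|<c\}} \, ds .
\]
By the area/co-area formula applied in a small neighbourhood of each non-degenerate zero of $\nabla\Psi$ in $D$, one has $N_\eta \to N$ almost surely as $\eta\to 0$. Applying Fubini and then Proposition~\ref{prop:cond} to condition on $\nabla\Psi(s)=y$,
\[
\mathbb{E}[N_\eta] = \int_D \!\int_{\mathbb{R}^2} \rho_\eta(y)\,\varphi_s(y)\, \mathbb{E}\bigl[|\det\nabla^2\Psi(s)|\, \id_{\{|\Psi(s)-\ell|<c\}} \,\bigm|\, \nabla\Psi(s)=y\bigr] \, dy \, ds ,
\]
where $\varphi_s$ now denotes the density of $\nabla\Psi(s)$. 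Because $\varphi_s$ is continuous at $0$ (non-degeneracy) and the inner conditional expectation is continuous in $y$ at $0$ (from the explicit Gaussian regression formula in Proposition~\ref{prop:cond} together with dominated convergence), the inner $y$-integral converges as $\eta\to 0$ to $\varphi_s(0)\,\mathbb{E}[|\det\nabla^2\Psi(s)|\,\id_{\{|\Psi(s)-\ell|<c\}} \mid \nabla\Psi(s)=0]$, which is exactly the integrand in the claimed formula.

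The main obstacle is to justify exchanging the limit $\eta\to 0$ with the two integrals, i.e.\ to upgrade the a.s.\ convergence $N_\eta\to N$ (and the pointwise convergence of the inner integrals) to convergence of expectations. For this I would establish a uniform-in-$(s,y)$ bound, valid for $s\in\overline{D}$ and $|y|\le 1$, of the form
\[
\varphi_s(y)\,\mathbb{E}\bigl[|\det\nabla^2\Psi(s)|\,\id_{\{|\Psi(s)-\ell|<c\}} \,\bigm|\, \nabla\Psi(s)=y\bigr] \le C,
\]
for some constant $C=C(\kappa,D,c)$. This bound comes directly from Lemma~\ref{lem:prod} applied to the entries of $\nabla^2\Psi(s)$, whose conditional means and variances given $\nabla\Psi(s)=y$ are bounded uniformly in $(s,y)$ on $\overline{D}\times B(0,1)$ (by stationarity and Proposition~\ref{prop:cond}), together with the uniform upper bound on $\varphi_s$ coming from the uniform lower bound on $\det\operatorname{Cov}(\nabla\Psi(s))$. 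Dominated convergence then delivers both the swap of limits and the final identity. This is the only delicate part; the remainder of the argument is bookkeeping around Proposition~\ref{prop:cond} and the basic estimates of Section~\ref{sec:gauss}.
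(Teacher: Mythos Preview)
The paper does not actually prove this proposition: immediately before stating it, the authors write ``since these are standard, we omit the proof'' and point to the references \cite{Adler, AW}. Your argument is precisely the standard derivation found in those references (mollify the Dirac mass, apply Fubini and Gaussian regression, then pass to the limit), so there is nothing to compare against and your approach is the expected one.

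One small point worth tightening: you argue that $N_\eta\to N$ almost surely and separately that $\mathbb{E}[N_\eta]$ converges to the claimed integral, but you should make explicit why these two limits agree, i.e.\ why $\lim_\eta \mathbb{E}[N_\eta]=\mathbb{E}[N]$. Almost-sure convergence alone is not enough; the usual route is Fatou for the inequality $\mathbb{E}[N]\le \lim_\eta \mathbb{E}[N_\eta]$, together with a localisation/covering argument (or a uniform-in-$\eta$ domination of $N_\eta$ by an integrable variable built from $\|\Psi\|_{C^2}$ on $\overline{D}$) for the reverse inequality. This is routine under Assumption~\ref{assumpt:degen} and Lemma~\ref{lem:derbounds}, but it is the one place where your sketch elides a genuine step.
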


\subsection{Level set crossings}
We first use the one-dimensional Kac-Rice formula to give bounds for events involving the intersection of level sets with line segments. 

\subsubsection{Single crossings}
Let $L$ be a line segment in $\mathbb{R}^2$. For each $\varepsilon > 0$, let $N^\varepsilon$ denote the number of intersections of $\mathcal{N}_\ell$ with the scaled line segment $\varepsilon L$, i.e.\ $N^\varepsilon := \left| \left\{ x \in L : \varepsilon x \in \mathcal{N}_\ell   \right\} \right|$.

\begin{proposition}[Single crossing of one line segment]
\label{prop:kr1}
There exists a $c = c(\kappa, \ell) > 0$ such that, for each $\varepsilon > 0$,
\[ \mathbb{E} \left[ N^\varepsilon  \right]  = c |L| \varepsilon .\]
\end{proposition}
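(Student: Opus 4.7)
The plan is to apply the one-dimensional Kac-Rice formula (Proposition~\ref{prop:1dkr}) directly with $k=1$, $\mu^- = 0$, $\mu^+ = \pi$, and the single segment $\varepsilon L$. Writing $v \in S^1$ for a unit vector along $L$, the Kac-Rice hypothesis reduces to asking that $\Psi(s)$ be non-degenerate at every $s \in \varepsilon L$, which is automatic since $\Psi$ is normalised. The formula then yields
\[
\mathbb{E}[N^\varepsilon] = \int_{s \in \varepsilon L} \varphi_s(\ell)\, \mathbb{E}\!\left[|\Psi_v(s)|\,\big|\,\Psi(s) = \ell\right] ds,
\]
where $\varphi_s$ is the density of $\Psi(s)$.

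Next I would use stationarity to eliminate $s$-dependence: because $\Psi$ is stationary with zero mean and unit variance, $\varphi_s(\ell) = (2\pi)^{-1/2} e^{-\ell^2/2}$ independently of $s$, and the conditional law of $\Psi_v(s)$ given $\Psi(s)=\ell$ agrees with that of $\Psi_v(0)$ given $\Psi(0) = \ell$. The integrand is therefore constant along $\varepsilon L$, and the integral equals $|L|\varepsilon$ times this constant.

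To identify the constant, I would invoke Lemma~\ref{lem:oddeven} (derivatives of different parity are independent) — or equivalently observe that $\kappa$ is even so $\kappa_v(0) = 0$, and then apply Lemma~\ref{lem:cov} to see $\mathrm{Cov}(\Psi(0), \Psi_v(0)) = -\kappa_v(0) = 0$. Hence $\Psi_v(0)$ is independent of $\Psi(0)$, and the conditional expectation collapses to the unconditional one:
\[
\mathbb{E}\!\left[|\Psi_v(0)|\,\big|\,\Psi(0) = \ell\right] = \mathbb{E}[|\Psi_v(0)|] = \sqrt{\tfrac{2}{\pi}}\,\sqrt{-\kappa_{vv}(0)},
\]
using Lemma~\ref{lem:cov} again for the variance $-\kappa_{vv}(0)$, which is strictly positive by Assumption~\ref{assumpt:degen}(2). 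Combining gives the claim with $c = (2\pi)^{-1/2} e^{-\ell^2/2} \sqrt{-2\kappa_{vv}(0)/\pi}$, which is bounded and bounded below by positive constants depending only on $\kappa$ and $\ell$ (with a mild, harmless dependence on the direction $v$).

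There is no real obstacle: the proof is essentially a one-line consequence of Kac-Rice, with the only substantive observation being the orthogonality of $\Psi(0)$ and $\Psi_v(0)$ that makes the conditional moment explicit. The main thing to check carefully is that Assumption~\ref{assumpt:degen} supplies every non-degeneracy and smoothness input needed to invoke Proposition~\ref{prop:1dkr} and Lemma~\ref{lem:cov}.
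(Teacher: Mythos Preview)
Your proposal is correct and follows essentially the same route as the paper: apply the one-dimensional Kac-Rice formula with $k=1$, use stationarity to make the integrand constant, and invoke Lemma~\ref{lem:oddeven} to remove the conditioning. You additionally compute the constant explicitly and correctly flag its dependence on the direction $v$ (which the statement of the proposition suppresses); your normalisation $e^{-\ell^2/2}$ is also the correct one.
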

\begin{proof}
Let $v \in S^1$ be a unit vector in the direction of the line segment $L$. By the one-dimensional Kac-Rice formula in Proposition \ref{prop:1dkr} (setting $k := 1, \mu^- := 0, \mu^+ := \pi, L_1  := L$),
\begin{align*}
\mathbb{E} \left[ N^\varepsilon  \right]    = \varepsilon (2 \pi)^{-1/2}  e^{-\ell^2} \int_{s  \in L}  \,  \mathbb{E} \left[  \left|  \Psi_v(\varepsilon s) \right|  \big|   \Psi(\varepsilon s) = \ell  \right]  \, ds .
\end{align*}
Applying Lemma \ref{lem:oddeven}, $\Psi_v(\varepsilon s)$ is independent of $\Psi(\varepsilon s)$. Moreover, by stationarity, the distribution of $\Psi_v(x)$  is identical for each $x$. Hence 
\[  \mathbb{E} \left[ N^\varepsilon  \right]    = \varepsilon (2 \pi)^{-1/2} e^{-\ell^2}  \,|L|  \, \mathbb{E} \left[  \left|  \Psi_v(0) \right|    \right]  ,\]
which proves the result.
\end{proof}

\subsubsection{Double-crossings}

Fix $d > 0$ and let $L := (L_i)_{i=1,2}$ be a pair of line segments in $\mathbb{R}^2$ contained within the ball $B(d)$, with $m:=\max_{i = 1,2} |L_i|$. For each $\varepsilon > 0$, define the random variable
\[ N^\varepsilon := \left| \left\{ (s_1 \in \varepsilon L_1 \cap \mathcal{N}_\ell, s_2 \in \varepsilon L_2 \cap \mathcal{N}_\ell) :  s_1 \neq s_2 \right\} \right| .\]

\begin{proposition}[Double-crossing of two line segments]
\label{prop:kr2}
There exists a $c = c(\kappa, \ell, d) > 0$ such that, for each $\varepsilon \in (0, 1)$,
\[ \mathbb{E} \left[ N^\varepsilon \right]  < c m^2  \varepsilon .\]
Moreover, if $L_1$ and $L_2$ lie on a common line, the bound can be replaced with $c m^2 \varepsilon^3$.
\end{proposition}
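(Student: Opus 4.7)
The plan is to apply the one-dimensional Kac--Rice formula of Proposition~\ref{prop:1dkr} with $k=2$, $\mu^- = 0$, $\mu^+ = \pi$, and integration domain $\varepsilon L_1 \times \varepsilon L_2$, and then to analyse the resulting integrand as the two points approach each other. Non-degeneracy of the pair $(\Psi(s_1), \Psi(s_2))$ for distinct points in $\varepsilon B(d)^2$ follows, for $\varepsilon d$ sufficiently small, from the Taylor expansion $\kappa(y) = 1 - \tfrac12 y^T H y + O(|y|^4)$ (using evenness of $\kappa$ from Lemma~\ref{lem:oddeven}) together with positive-definiteness of $H := -\nabla^2 \kappa(0)$ from Assumption~\ref{assumpt:degen}; the complementary regime of $\varepsilon d$ bounded away from zero can be absorbed into the final constant.

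Writing $y := s_1 - s_2$ and $u := y/|y|$, the key quantities are the Gaussian density $\varphi_s(\ell, \ell)$ and the conditional expectation of $|\Psi_{v_1}(s_1) \Psi_{v_2}(s_2)|$ given $\Psi(s_1) = \Psi(s_2) = \ell$. From Lemma~\ref{lem:cov} and the Taylor expansion, $\det \Sigma_Y = 1 - \kappa(y)^2 = \langle Hy, y\rangle + O(|y|^4)$, which yields the uniform bound $\varphi_s(\ell, \ell) \leq c/|y|$. Proposition~\ref{prop:cond} (Gaussian regression), combined with $\kappa_v(0) = 0$ (evenness) and $\kappa_v(y) = -\langle Hy, v\rangle + O(|y|^2)$, then shows that the conditional mean of $(\Psi_{v_1}(s_1), \Psi_{v_2}(s_2))$ is of order $O(|y|)$ and that the conditional variance of $\Psi_{v_i}(s_i)$ equals $\sigma_{ii} - \langle Hu, v_i\rangle^2/\langle Hu, u\rangle + O(|y|)$, where $\sigma_{ii} := -\kappa_{v_i v_i}(0)$. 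By the Cauchy--Schwarz inequality for the positive-definite bilinear form $\langle H\cdot, \cdot\rangle$, this expression is bounded above by $\sigma_{ii}$, with the bound saturated precisely when $v_i = \pm u$.

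For the general case, Lemma~\ref{lem:prod} applied to this uniformly bounded conditional distribution yields $\mathbb{E}\bigl[|\Psi_{v_1}(s_1)\Psi_{v_2}(s_2)| \mid Y = (\ell, \ell)\bigr] \leq c$, so the full Kac--Rice integrand is bounded by $c/|y|$. Rescaling $s_i = \varepsilon \tilde s_i$ and bounding the resulting integral of $|\tilde s_1 - \tilde s_2|^{-1}$ over $L_1 \times L_2 \subseteq B(d)^2$ produces the bound $\mathbb{E}[N^\varepsilon] \leq c m^2 \varepsilon$; for configurations where the lines intersect transversally this uses the 2D integrability of $1/|\cdot|$, while for near-parallel configurations one exploits the complementary vanishing of the conditional variance when $u$ aligns with the line directions in order to compensate for the proximity. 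In the collinear case, $v_1 = \pm v_2$ and $u = \pm v_i$ identically, so the Cauchy--Schwarz bound is saturated throughout; a finer expansion, exploiting $\kappa \in C^6$ and its evenness, shows that the conditional variance vanishes at rate $|y|^2$, so that together with the $O(|y|)$ conditional mean Lemma~\ref{lem:prod} gives $\mathbb{E}\bigl[|\Psi_{v_1}\Psi_{v_2}| \mid \cdot \bigr] \leq c |y|^2$. The integrand is then of order $|y|$ rather than $|y|^{-1}$, and integration produces the sharper bound $c m^2 \varepsilon^3$.

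The main technical obstacle is the higher-order cancellation in the collinear case, where both the numerator $\kappa_v(y)^2$ and the denominator $1 - \kappa(y)^2$ in the conditional-variance expression vanish at rate $|y|^2$: one must carry the Taylor expansion of $\kappa$ to fourth order and track the resulting lower-order corrections in order to verify that the quotient exceeds $\sigma_{ii}$ by precisely $O(|y|^2)$, thereby producing the cubic gain in $\varepsilon$. A secondary difficulty in the general case is keeping the constant in the bound on $\int_{L_1 \times L_2} |\tilde s_1 - \tilde s_2|^{-1} d\tilde s_1 d\tilde s_2$ uniform in the geometry of $L_1, L_2$, for which the vanishing of the conditional variance near the alignment direction must be combined with the density estimate.
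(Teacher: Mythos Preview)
Your approach matches the paper's: both apply the $k=2$ Kac--Rice formula, bound the Gaussian density by $|A|^{-1/2}$, use Gaussian regression together with Lemma~\ref{lem:prod} to control the conditional product, and in the collinear case invoke a fourth-order Taylor cancellation (offloaded in the paper to the explicit computation of Proposition~\ref{prop:append1}) to show the conditional variance is $O(|y|^2)$ and thereby gain the extra $\varepsilon^2$. The paper additionally first normalises to $\kappa_{vv}(0)=-2$ by a linear change of coordinates to simplify the algebra, and in the non-collinear case simply integrates the crude bound $c/|s_1-s_2|$ over $L_1\times L_2$ without the near-parallel refinement you sketch in your final paragraph.
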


\begin{proof}
Remark that, by a linear rescaling of the plane, we may reduce to the case that $\kappa_{vv}(0) = -2$ for all unit vectors $v \in S^1$; this simplification will be useful when we appeal to the computation in Appendix~\ref{appendix2}. We may also choose $\varepsilon$ sufficiently small so that, for each $s = (s_1, s_2) \in \varepsilon L$ with $s_i$ distinct, the distribution of $\Psi(s)$ is non-degenerate; this is possible since $\kappa_{vv}(0) \neq 0$.

Let $v_1$ and $v_2$ be unit vectors in the direction of the line segments $L_1$ and $L_2$ respectively. By the one-dimensional Kac-Rice formula in Proposition \ref{prop:1dkr} (setting $k := 2,  \mu^- := 0, \mu^+ := \pi$) and using the fact that the density of a two-dimensional Gaussian vector with covariance matrix $\Sigma$ is at most $(2 \pi)^{-1} |\Sigma|^{-1/2}$,
\begin{align*}
&  \mathbb{E} \left[ N^\varepsilon   \right]    < \varepsilon^2 (2 \pi)^{-1}  \int_{s = (s_1, s_2) \in L_1 \times L_2 } \! \! \! \! \! \! \! \! |A^{\varepsilon, s}|^{-\frac12}  \,  \mathbb{E} \left[  \left|  \Psi_{v_1}(\varepsilon s_1) \Psi_{v_2}(\varepsilon s_2) \right|  \big|   \Psi(\varepsilon s_1) = \Psi(\varepsilon s_2) = \ell   \right]  \, ds_1 \, ds_2 ,
\end{align*}
where  
\[ A^{\varepsilon, s} :=  \left[ \begin{array}{cc}
1 & \kappa(\varepsilon (s_1-s_2)) \\
\kappa(\varepsilon (s_1-s_2)) & 1 \\
\end{array} \right] . \]
Fix for a moment $s = (s_1, s_2) \in L^2$ such that $s_1 \neq s_2$. Applying Proposition \ref{prop:cond} and Lemma~\ref{lem:cov}, conditionally on $\Psi(\varepsilon s_1) = \Psi(\varepsilon s_2) = \ell$ the vector
\[  \left( \Psi_{v_1}(s_1) , \Psi_{v_2}(s_2) \right) \]
has a Gaussian distribution with mean $\ell \mu^{\varepsilon, s}$, where
\[ \mu^{\varepsilon, s}  := (\mu^{\varepsilon, s} _i)_{i = 1,2} =  B^{\varepsilon, s} (A^{\varepsilon, s})^{-1}  (1, 1)^T , \]
and covariance
\[   D^{\varepsilon, s} = (d^{\varepsilon, s}_{ij})_{1 \le i ,j \le 2} := C - B^{\varepsilon, s} (A^{\varepsilon, s})^{-1} (B^{\varepsilon, s})^T ,  \]
for
\[  B^{\varepsilon, s} :=  \left[ \begin{array}{cc}
0 & \kappa_{v_1}(\varepsilon (s_1 - s_2)) \\
- \kappa_{v_2}(\varepsilon (s_1 - s_2))  & 0 \\
\end{array} \right] \]
and $C = (c_{ij})_{1 \le i,j \le 2}$ satisfying $c_{ii} = 2$. Applying Lemma \ref{lem:prod}, we deduce that there exists a $c_0 = c_0(\ell) > 0$ such that
\[  |A^{\varepsilon, s}|^{-\frac12}  \,  \mathbb{E} \left[  \left|  \Psi_v(\varepsilon s_1) \Psi_v(\varepsilon s_2) \right| \big|   \Psi(\varepsilon s_1) = \Psi(\varepsilon s_2) = \ell   \right]  <  c_0 \left( \frac{ \max_{i}  \max\{ (\mu^{\varepsilon, s}_i)^4, (d^{\varepsilon, s}_{ii})^2 \} }{|A^{\varepsilon, s}| }\right)^{\frac12} . \]

We first treat the case that $L_1$ and $L_2$ do not lie on a common line. Observe that, since $A^{\varepsilon, s}$ and $B^{\varepsilon, s}$ are positive definite, $d^{\varepsilon, s}_{ii} \le 2$. Moreover, by Taylor's theorem, there exists a constant $c_1 = c_1(\kappa, d) > 0$ such that 
\[ \kappa_{v_1}(\varepsilon (s_1 - s_2)) < c_1 \varepsilon |s_1 - s_2| .\]
Computing $B^{\varepsilon, s} (A^{\varepsilon, s})^{-1} (1, 1)^T$ and $|A^{\varepsilon, s}|$ explicitly, there exists a $c_2 = c_2(\kappa, d) > 0$ such that
\[ \left( \frac{ \max_{i}  \max\{ (\mu^{\varepsilon, s}_i)^4, (d^{\varepsilon, s}_{ii})^2 \} }{|A^{\varepsilon, s}| }\right)^{\frac12} \le  \frac{  \max\left\{ (c_1 \varepsilon)^2 \left( 1 - \kappa(\varepsilon (s_1-s_2))   \right)^2 , 2 \right\} }{ \left( 1 - \kappa(\varepsilon (s_1-s_2))^2 \right)^{\frac12} } < \frac{c_2}{ \varepsilon |s_1 - s_2| } ,  \]
where the last inequality holds by Taylor's theorem. Integrating over $s \in L_1 \times L_2$, we have the result. 

Turning now to the case that $L_1$ and $L_2$ lie on a common line, we may choose $v_1 = v_2 =: v$. Remark that, by Taylor's theorem, there exists a constant $c_1 = c_1(\kappa, d) > 0$ (i.e.\ independent of $s$) and a $C^1$ function $f^s : \mathbb{R}^+ \to \mathbb{R}$ satisfying, for each $x \in (0, 1)$,
\[  \left| f^s(x) - 1 + x^2 \right| < c_1 x^4  ,  \quad  \left| (f^s)'(x) - 2 x \right| < c_1  x^3,   \]
such that
\[  \kappa(\varepsilon (s_1-s_2))  =  f^s(\varepsilon |s_1 - s_2| )   \qquad \text{and} \qquad  \kappa_v(\varepsilon (s_1 - s_2))  =  (f^s)'(\varepsilon  |s_1 - s_2|  )  .\]
Applying the computation in Proposition \ref{prop:append1}, there exists a $c_2 = c_2(\kappa,  d) > 0$, such that, for each $\varepsilon \in (0, 1)$,
\[ \left( \frac{ \max_{i}  \max\{ (\mu^{\varepsilon, s}_i)^4, (d^{\varepsilon, s}_{ii})^2 \} }{|A^{\varepsilon, s}| }\right)^{\frac12}   < c_2 \varepsilon .\]
Integrating over $s \in L$, we have the result.
\end{proof}

\subsubsection{Triple-crossings}
Fix $d > 0$ and an angle $\mu \in [0, \pi)$. Let $L := (L_i)_{1 \le i \le 3}$ be a triple of line segments in $\mathbb{R}^2$ contained within the ball $B(d)$. For each $\varepsilon > 0$, define the random variable
\[ N^\varepsilon := \left| \left\{ (s_1 \in \varepsilon L_1 \cap \mathcal{N}_\ell, \ldots , s_3 \in \varepsilon L_3 \cap \mathcal{N}_\ell) : \{s_i\}_{1 \le i \le 3} \text{ distinct, } \theta^-(s) \ge \varepsilon^{3/2}, \theta^+(s) \le \mu \right\} \right| .\] 
Remark that $N^\varepsilon$ counts triple-crossings that are not too degenerate; such triple-crossings pose problems for our asymptotic analysis, and we prefer to deal with these in other ways. 

\begin{proposition}[Triple-crossing of three line segments]
\label{prop:kr3}
Fix $\delta > 0$. Then there exists a $c = c(\kappa, \ell, d, \mu, L, \delta) > 0$ such that, for each $\varepsilon \in (0, 1)$,
\[ \mathbb{E} \left[  N^\varepsilon  \right]  < c  \varepsilon^{4 - \delta} .\] 
\end{proposition}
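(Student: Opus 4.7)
The plan is to apply the one-dimensional Kac-Rice formula (Proposition~\ref{prop:1dkr}) with $k=3$ directly to $N^\varepsilon$, following the template set by the proof of Proposition~\ref{prop:kr2}, and then to carefully control the resulting three-point integral using Taylor expansions of $\kappa$ near the origin. After the linear rescaling that gives $\kappa_{vv}(0)=-2$ for every unit $v$ (so that $\kappa(t)=1-|t|^2+O(|t|^4)$), and choosing $\varepsilon$ small enough that Proposition~\ref{prop:three} secures non-degeneracy of $(\Psi(\varepsilon s_1),\Psi(\varepsilon s_2),\Psi(\varepsilon s_3))$ throughout the integration region -- guaranteed because $\theta^+\le\mu<\pi$ keeps the triples away from collinear configurations while $\theta^-\ge\varepsilon^{3/2}$ quantifies this -- Proposition~\ref{prop:1dkr} yields
\[
\mathbb{E}[N^\varepsilon]=\varepsilon^3\int_{L_1\times L_2\times L_3}\varphi_{\varepsilon s}(\ell)\,\mathbb{E}\Big[\prod_{i=1}^3|\Psi_{v_i}(\varepsilon s_i)|\,\Big|\,\Psi(\varepsilon s_i)=\ell\ \forall i\Big]\,\id_{\{\theta^-(s)\ge\varepsilon^{3/2},\,\theta^+(s)\le\mu\}}\,ds,
\]
where $v_i$ is the unit vector along $L_i$ and $\varphi_{\varepsilon s}(\ell)$ is the density of $(\Psi(\varepsilon s_i))_{i=1,2,3}$ at $(\ell,\ell,\ell)$.

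The next step is to control the two factors of the integrand. For the density, a $3\times 3$ determinant expansion of Cayley--Menger type, using $\kappa(\varepsilon(s_i-s_j))\approx 1-\varepsilon^2|s_i-s_j|^2$ with the appropriate quartic corrections, yields $|\Sigma^{\varepsilon s}|\ge c\varepsilon^4 A(s)^2$ where $A(s)$ is the area of the triangle with vertices $s_1,s_2,s_3$, and hence $\varphi_{\varepsilon s}(\ell)\le C\varepsilon^{-2}/A(s)$. For the conditional expectation of the product of derivatives, Proposition~\ref{prop:cond} expresses the conditional mean and covariance of $(\Psi_{v_i}(\varepsilon s_i))_i$ in terms of submatrices $\Sigma_X,\Sigma_{XY},\Sigma_Y$; the key analytic input is that the leading-order contribution to $\Sigma_X-\Sigma_{XY}\Sigma_Y^{-1}\Sigma_{XY}^T$ cancels exactly (reflecting the geometric fact that a smooth function passing through three nearby equal-value points must have small gradient), and a Taylor expansion of $\kappa$ to fourth order -- carried out in the spirit of Proposition~\ref{prop:append1} -- shows the first non-vanishing term is $O(\varepsilon^2)$. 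Combining with Lemma~\ref{lem:prod} yields $\mathbb{E}[\prod_{i=1}^3|\Psi_{v_i}(\varepsilon s_i)|\mid\ldots]\le C\varepsilon^3$.

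Assembling the bounds gives $\mathbb{E}[N^\varepsilon]\le C\varepsilon^4\int A(s)^{-1}\id_{\{\theta^-\ge\varepsilon^{3/2},\,\theta^+\le\mu\}}\,ds$. Since $A(s)\ge c\sin(\theta^-(s))$ on the integration region (using $\theta^+\le\mu<\pi$ and the boundedness of $L$), and since $\theta^-$ is a smooth function on the 3-dimensional parameter space with bounded density near zero, the remaining integral is bounded by $C\int_{\varepsilon^{3/2}}^{\pi}(1/\sin\theta)\,d\theta\le C\log(1/\varepsilon)$, giving $\mathbb{E}[N^\varepsilon]\le C\varepsilon^4\log(1/\varepsilon)\le c\varepsilon^{4-\delta}$ for any $\delta>0$. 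The hard part is the cancellation argument for the conditional covariance: one must carry the Taylor expansion of $\kappa$ to fourth order and verify that the leading-order cancellation in $\Sigma_X-\Sigma_{XY}\Sigma_Y^{-1}\Sigma_{XY}^T$ survives uniformly across the integration region, including in the near-collinear regime permitted by $\theta^-\ge\varepsilon^{3/2}$; the $3\times 3$ matrix algebra is strictly more delicate than the $2\times 2$ collinear computation of Proposition~\ref{prop:append1}. A subsidiary technical point is extracting the quantitative lower bound $|\Sigma^{\varepsilon s}|\ge c\varepsilon^4 A(s)^2$ from the higher-order terms of the three-point covariance determinant, which is the quantitative three-point analogue of Proposition~\ref{prop:three}.
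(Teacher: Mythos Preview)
Your approach is essentially the same as the paper's: apply the three-point Kac--Rice formula, bound the density by $|A^{\varepsilon,s}|^{-1/2}$, use Gaussian regression and Lemma~\ref{lem:prod} to control the conditional product of derivatives, and reduce everything to a matrix computation (the paper's Proposition~\ref{prop:append2}) followed by the integral $\int_L 1/\theta^-(s)\,\id_{\{\theta^-\ge\varepsilon^{3/2}\}}\,ds\le c\varepsilon^{-\delta}$. Your bookkeeping differs only cosmetically: you separate the density bound $\varphi\le C\varepsilon^{-2}/A(s)$ from the conditional-expectation bound $\mathbb{E}[\prod|\Psi_{v_i}|\mid\cdots]\le C\varepsilon^3$, whereas the paper keeps them together as $(\max\{\mu_i^6,d_{ii}^3\}/|A|)^{1/2}\le c\varepsilon/\theta^-(s)$; since $A(s)\asymp\text{diam}(s)^2\theta^-(s)$ under the constraint $\theta^+\le\mu$, these are equivalent.

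One correction: a fourth-order expansion of $\kappa$ is not enough. The leading-order determinant is indeed $16\varepsilon^4 A(s)^2$, but the next correction is $O(\varepsilon^6\text{diam}(s)^6)$, and in the near-collinear regime $\theta^-\sim\varepsilon^{3/2}$ one has $A(s)^2\sim\varepsilon^3$, so the correction is \emph{not} dominated by the main term at that order. The paper's Proposition~\ref{prop:append2} therefore expands $\kappa$ to sixth order (this is the origin of the $C^6$ hypothesis in Assumption~\ref{assumpt:degen}) and, in the small-angle case, tracks products out to order $\varepsilon^9$ to verify the required cancellations in both $|A|$ and $d_{ii}|A|$. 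Your identification of this as ``the hard part'' is correct, but the order of expansion you name should be upgraded accordingly.
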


\begin{remark}
Unlike the condition $\theta^+(s) \le \mu$, which is crucial, we do not believe that the constraint $\theta^-(s) \ge \varepsilon^{3/2}$ is necessary for Proposition \ref{prop:kr3} to hold; we impose it because it greatly simplifies the proof (see in particular the computation in Proposition \ref{prop:append2}). On the other hand, the result would be too weak for our purposes if we chose any stronger constraint; see Proposition~\ref{prop:contriple}. 

\end{remark}
\begin{proof}
As in the proof of Proposition \ref{prop:kr2}, by a linear rescaling of the plane we may reduce to the case that $\kappa_{vv}(0) = -2$ for all unit vectors $v \in S^1$. We may also choose $\varepsilon$ sufficiently small so that, for each $s = (s_1, s_2, s_3) \in \varepsilon L$  with $\{s_i\}_{1 \le i \le k}$ distinct and not co-linear, the distribution of $\Psi(s)$ is non-degenerate; this is possible by Proposition \ref{prop:three}.

Let $v_1, v_2$ and $v_3$ be unit vectors in the direction of the line segments $L_1, L_2$ and $L_3$ respectively. By the one-dimensional Kac-Rice formula in Proposition \ref{prop:2dkr} (setting $k := 3, \mu^- := \varepsilon^{3/2}, \mu^+ := \mu$) and using the fact that the density of a three-dimensional Gaussian vector with covariance matrix $\Sigma$ is at most $(2 \pi)^{-3/2} |\Sigma|^{-1/2}$,
\begin{align*}
  \mathbb{E} \left(  N^{\varepsilon}  \right)   < (2 \pi)^{-\frac32}  \varepsilon^3  \int_{s \in L} \! \! |A^{\varepsilon, s}|^{-\frac12}  \,  \mathbb{E} \bigg[  \prod_{1 \le i \le 3}  \left| \Psi_{v_i}(\varepsilon s_i) \right|  \big|  \Psi(\varepsilon s_i) = \ell \bigg] \mathbf{1}_{\{  \theta^-(s) \ge  \varepsilon^{3/2} ,  \theta^+(s) \le \mu \}}   \, ds ,
\end{align*}
where
\[ A^{\varepsilon, s} :=  \left( \kappa(\varepsilon (s_i - s_j) ) \right)_{1 \le i,j \le 3} .\]
Fix for a moment $s = (s_1, s_2, s_3)$ with $s_i$ are distinct. Applying Proposition \ref{prop:cond} and Lemma~\ref{lem:cov}, conditionally on $\Psi(\varepsilon s_1) = \Psi(\varepsilon s_2) = \Psi(\varepsilon s_3) = \ell$ the vector
\[  \left( \Psi_{v_1}(s_1) ,  \Psi_{v_2}(s_2) ,  \Psi_{v_3}(s_3)  \right) \]
has a Gaussian distribution with mean $\ell \mu^{\varepsilon, s} $, where
\[ \mu^{\varepsilon, s}  := (\mu^{\varepsilon, s} _i)_{1 \le i \le 3} = B^{\varepsilon, s} (A^{\varepsilon, s})^{-1} (1, 1, 1)^T, \]
and covariance
\[   D^{\varepsilon, s} = (d^{\varepsilon, s}_{ij})_{1 \le i ,j \le 3} := C -  B^{\varepsilon, s} (A^{\varepsilon, s})^{-1} (B^{\varepsilon, s})^T  ,  \]
for
\[ B^{\varepsilon, s} := ( \kappa_{v_j}(\varepsilon (s_i - s_j))  )_{1 \le i,j \le 3}  \]
and $C = (c_{ij})_{1 \le i, j \le 3}$ satisfying $c_{ii} = 2$. Applying Lemma \ref{lem:prod}, we deduce that there exists a $c_0 = c_0(\ell) > 0$ such that,
\[     |A^{\varepsilon, s}|^{-\frac12}  \,  \mathbb{E} \bigg[  \prod_{1 \le i \le 3}  \left| \Psi_{v_i}(\varepsilon s_i) \right|  \big|  \Psi(\varepsilon s_i) = \ell \bigg]     \le c_0  \left( \frac{ \max_i  \max\{ (\mu^{\varepsilon, s}_i)^6,   (d^{\varepsilon, s, v}_{ii })^3 \}  }{|A^{\varepsilon, s}|} \right)^{\frac12}. \]
For $1 \le i \neq j \le 3$, let $\eta_{ij}$ be the angle formed between the line segment $s_i s_j$ and the vector~$v_j$.  Remark that, by Taylor's theorem, there exist constants $c_1 = c_1(\kappa, d) > 0$ (i.e.\ independent of~$s$)$, e^s_{ij} = e^s_{ij}(\kappa)$ and functions $f^s_{ij}, g^s_{ij}: \mathbb{R}^+ \to \mathbb{R}$ satisfying, 
\[ |e^s_{ij} - e^s_{ik} | < c_1 |\eta_{ij} - \eta_{ik}| ,\]
and, for each $x \in (0, 1)$,
\[  \left| f^s_{ij}(x) - 1 + x^2 + e_{ij}^s x^4  \right| < c_1 x^6 \ , \quad f^s_{ij} = f^s_{ji}  \]
and
\[ \left| g^s_{ij}(x) +  \cos(\eta_{ij}) \left( 2x + 4 e_{ij}^s x^3 \right)   \right| < c_1 x^5   ,\]
such that
\[    \kappa(\varepsilon (s_i-s_j))   =  f^s_{ij}(\varepsilon |s_i-s_j| ) \qquad \text{and} \qquad  \kappa_{v_j}(\varepsilon (s_i - s_j))  = g^s_{ij}(\varepsilon |s_i - s_j|)   .\]
Applying the computation in Proposition \ref{prop:append2}, there exists a $c_2 = c_6(\kappa, d, \mu, L) > 0$ such that, for each $\varepsilon \in (0, 1)$,
\[ \left( \frac{ \max_i  \max\{ (\mu^{\varepsilon, s}_i)^6,   (d^{\varepsilon, s, v}_{ii })^3 \}  }{|A^{\varepsilon, s}|} \right)^{\frac12}  \times  \theta^{-}(s) < c_2 \varepsilon .\]
To complete the proof, notice that there exists a $c_3 = c_3(d, \delta) > 0$ such that, for $\varepsilon \in (0, 1)$,
\[ \int_{s \in L} \frac{1}{\theta^-(s)} \mathbf{1}_{\{\theta^-(s) >  \varepsilon^{3/2} \}}   \, ds  <  c_3 \varepsilon^{-\delta},  \]
and we have the result.
\end{proof}

\subsection{Critical points near a level}

We now use the two-dimensional Kac-Rice formula to give a bound on the number of critical points for which the $\Psi$ has a value that is near the level $\ell$. Let $D$ be a bounded domain. For each $s, \delta > 0$, define the random variable
\[  N^{s, \delta} := \left| \left\{ x \in sD : \nabla \Psi (x) = 0, |\Psi(x) - \ell| <  \delta \right \} \right| .\]

\begin{proposition}[Critical points near a level]
\label{prop:krmax}
There exists a $c = c(\kappa, D) > 0$ such that, for each $s > 0$ and $\delta > 0$, 
\[ \mathbb{E}\left[  N^{s, \delta}  \right] < c s^2 \delta .\]
\end{proposition}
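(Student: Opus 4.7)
The plan is to apply the two-dimensional Kac-Rice formula from Proposition \ref{prop:2dkr} directly, and then exploit stationarity, parity of derivatives, and the small-ball lemma for Gaussian products to extract the factor of $\delta$.

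First I would invoke Proposition \ref{prop:2dkr} with the choice $c := \delta$ to obtain
\[ \mathbb{E}[N^{s,\delta}] = \int_{x \in sD} \varphi_x(0) \, \mathbb{E}\bigl[ |\nabla^2 \Psi(x)|\, \id_{\{|\Psi(x)-\ell|<\delta\}} \,\big|\, |\nabla \Psi(x)| = 0 \bigr]\, dx, \]
where $\varphi_x(0)$ is the density at $0$ of $\nabla \Psi(x)$. By stationarity both $\varphi_x(0)$ and the conditional expectation are independent of $x$, and Assumption \ref{assumpt:degen} ensures via Lemma \ref{lem:reg}(3) that $\varphi_x(0)$ equals a finite constant $c_0 = c_0(\kappa)$. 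Thus
\[ \mathbb{E}[N^{s,\delta}] = c_0 \,\mathrm{Area}(sD)\, \mathbb{E}\bigl[ |\nabla^2 \Psi(0)|\, \id_{\{|\Psi(0)-\ell|<\delta\}} \,\big|\, \nabla \Psi(0) = 0 \bigr], \]
and it remains to bound the conditional expectation by $C\delta$ for some $C = C(\kappa)$.

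Next I would remove the conditioning. The vector $(\Psi(0), \Psi_{11}(0), \Psi_{12}(0), \Psi_{22}(0))$ consists entirely of even-order derivatives, while $\nabla \Psi(0)$ consists of odd-order derivatives, so Lemma \ref{lem:oddeven} (applicable by Assumption \ref{assumpt:degen}(1), since $\kappa \in C^6$ at the origin) shows that $(\Psi(0), \nabla^2\Psi(0))$ is independent of $\nabla\Psi(0)$. Hence the conditional expectation equals the unconditional expectation
\[ \mathbb{E}\bigl[ |\Psi_{11}(0)\Psi_{22}(0) - \Psi_{12}(0)^2|\, \id_{\{|\Psi(0)-\ell|<\delta\}} \bigr]. \]

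Finally, using the crude bound $|ab - c^2| \le 2\max\{a^2, b^2, c^2\}$, this is bounded by
\[ 2\, \mathbb{E}\bigl[ \max\{\Psi_{11}(0)^2, \Psi_{22}(0)^2, \Psi_{12}(0)^2\} \, \id_{\{|\Psi(0)-\ell|<\delta\}} \bigr]. \]
Applying Lemma \ref{lem:gau} to the Gaussian vector $(\Psi_{11}(0), \Psi_{22}(0), \Psi_{12}(0), \Psi(0))$ (whose covariance entries are fixed constants determined by derivatives of $\kappa$ at the origin, finite by Assumption \ref{assumpt:degen}(1)) yields an upper bound of the form $c_1 \delta$ for some $c_1 = c_1(\kappa, \ell)$. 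Combining everything gives $\mathbb{E}[N^{s,\delta}] < c\, s^2\, \delta$ with $c = c(\kappa, D)$ absorbing $c_0 \mathrm{Area}(D)$, as claimed. There is no real obstacle here; the only point requiring care is confirming that the parity argument decouples the Hessian and value from the gradient, so that the tail bound on $\Psi(0)$ interacts only with the Hessian entries and produces the desired linear-in-$\delta$ factor.
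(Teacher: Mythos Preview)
Your proof is correct and follows essentially the same approach as the paper: apply the two-dimensional Kac-Rice formula, use stationarity and the parity lemma (Lemma \ref{lem:oddeven}) to remove the conditioning on $\nabla\Psi(0)=0$, then bound the determinant of the Hessian by a quadratic in its entries and invoke Lemma \ref{lem:gau}. Your determinant bound $|ab-c^2|\le 2\max\{a^2,b^2,c^2\}$ is in fact more careful than the paper's, which writes $|\nabla^2\Psi(0)|\le \max\{|\Psi_{xx}(0)|,|\Psi_{yy}(0)|\}^2$ and appears to drop the off-diagonal term.
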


\begin{proof}
Similarly to in the proofs of Proposition \ref{prop:kr2} and \ref{prop:kr3}, by a linear rescaling of the plane we may reduce to the case that $\kappa_{vv}(0) = -2$ for all unit vectors $v \in S^1$. By the two-dimensional Kac-Rice formula in Proposition \ref{prop:2dkr}, and using the fact that the density of a two-dimensional Gaussian vector with covariance matrix $\Sigma$ is at most $(2 \pi)^{-1} |\Sigma|^{-1/2}$,
\[ \mathbb{E} \left[ N^{s, \delta}  \right]  = s^2 (2 \pi)^{-1}  \int_{x \in D} |A|^{-1/2}  \,  \mathbb{E} \left[   |  \nabla^2 \Psi(s x)|  \id_{\{ |\Psi(x) - \ell| <  \delta \}} \big|   |\nabla \Psi(s x)|  = 0   \right]  \, dx ,\]
where  $A := 2 \id_{2 \times 2}$. Applying Lemma \ref{lem:oddeven}, the random vector $\nabla \Psi(s x)$ is independent of $(\Psi(s x), \nabla^2 \Psi (s x))$. Moreover, by stationarity, the distribution of $(\Psi(x), \nabla^2 \Psi (x))$ is identical for each $x$. Hence 
\[  \mathbb{E} \left[ N^{s, \delta}  \right]  =  \frac{s^2}{4 \pi}  \text{Area}(D) \times \mathbb{E} \left[    | \nabla^2 \Psi(0)|  \id_{\{ |\Psi(0) - \ell| < \delta \}}   \right]  .\]
Since $|\nabla^2 \Psi(0)| \le \max\{ |\Psi_{xx}(0)|, |\Psi_{yy}(0)|\}^2$, applying estimate Lemma \ref{lem:gau} gives the result. 
\end{proof}

%%%%%%%%%%%%%%%%
\smallskip

\section{Perturbation analysis}
\label{sec:pert}

In this section we develop a perturbation analysis that will be crucial in our study of the nodal set of the random plane wave. This section is completely deterministic, and may be of independent interest, although a reader that is not concerned with the special case of the random plane wave may prefer to skip ahead.

Define a \textit{regular conic section} $\mathcal{C}$ to be any one of the following objects: (i) the empty set; (ii) a line; (iii) a circle; or (iv) a hyperbola whose principal axes intersect at right-angles. The aim of this section is to give sufficient criteria under which a solution to the Helmholtz equation~\eqref{eq:helm} has a nodal set that is locally well-approximated by a regular conic section.

First we define the sense in which we consider a set to be `well-approximated' by a regular conic section (the results that follow actually hold in a stronger sense, but this is sufficient for our purposes). For each $\delta > 0$ and set $\mathcal{S} \subseteq \mathbb{R}^2$, define the $\delta$\textit{-thickening of} $\mathcal{S}$ to be the closed set of all points within a distance $\delta$ from $\mathcal{S}$. For each $\delta > 0$ and domain $D \subseteq \mathbb{R}^2$, we say that a set $\mathcal{S} \subseteq \mathbb{R}^2$ satisfies the $\delta$\textit{-conic property in} $D$ if there exists a regular conic section $\mathcal{C}$ and a homeomorphism $h: D  \to D$ mapping $\mathcal{S} \cap D$ onto $ \mathcal{C} \cap D$ such that: (i) $\mathcal{S} \subseteq \mathcal{C}^\delta$; (ii) $|\mathcal{S} \cap \partial D| = |\mathcal{C} \cap \partial D|$; and (iii) $|h(s, 1) - s| < \delta$ for each $s \in  \mathcal{S} \cap \partial D$. 

See Figure~\ref{fig:cone} for an illustration of the $\delta$-conic property. Remark that it requires \textit{both} that~$\mathcal{S}$ lies inside a $\delta$-thickening of a regular conic section, and also that the boundary points of $\mathcal{S}$ are close to the boundary points of the conic section.

\begin{figure}[ht]
\begin{center}
\includegraphics[scale=1]{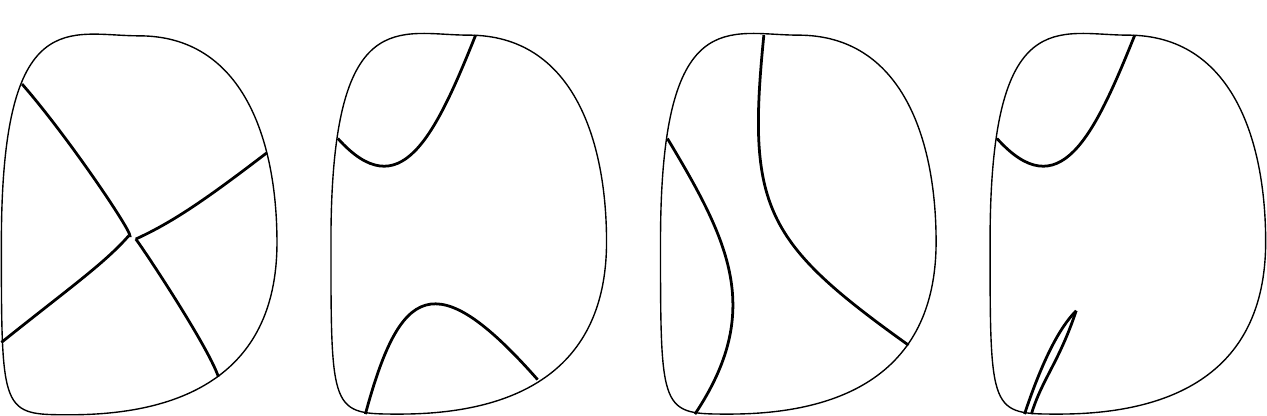}
\end{center}
\caption{Examples of sets $\mathcal{S}$ that satisfy the $\delta$-conic property in a domain $D$ for small $\delta> 0$ (the two figures on the left), and which do not satisfy the $\delta$-conic property in $D$ for small $\delta > 0$ (the two figures on the right; the first fails because $\mathcal{S}$ does not lie in the $\delta$-thickening of a regular conic section; the second fails because the boundary points $\mathcal{S}$ do not match the conic section.}
\label{fig:cone}
\end{figure}

\subsection{Perturbations of general functions}

In the next three lemmas, define functions $f, g: \mathbb{R}^2 \to \mathbb{R}$ and let $h:= f+g$, with $\mathcal{N}$ the nodal set of~$h$. 

\begin{lemma}[Nodal set of a perturbed linear function]
\label{lem:lin}
Suppose that $f$ is linear with $|\nabla f| = 1$, and let $\delta := \|g\|_{C^1(B(2))}$. Then there exists a $c > 0$ such that, for sufficiently small $\delta > 0$, either $D = B(1)$ or $D = B(1+c\delta)$ satisfy the $c \delta$-conic property.
\end{lemma}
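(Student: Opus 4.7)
The plan is to reduce to a canonical form via a rigid motion, parametrise the nodal set $\mathcal{N}$ of $h := f + g$ as a graph using the implicit function theorem, and then perform a case analysis on the position of the nodal line $\mathcal{L} := f^{-1}(0)$ relative to $\partial B(1)$. Since rigid motions preserve $|\nabla f|$, $\|g\|_{C^1(B(2))}$, and concentric balls, I may assume $f(x_1, x_2) = x_1 - d$ for some $d \ge 0$, so that $\mathcal{L} = \{x_1 = d\}$. For $\delta < 1/4$ the hypothesis $\|g\|_{C^1(B(2))} \le \delta$ yields $\partial_{x_1} h \ge 3/4$ on $B(2)$, while $|h(d, x_2)| = |g(d, x_2)| \le \delta$; hence the implicit function theorem produces on each horizontal section a unique zero $(X(x_2), x_2)$ of $h$, with $|X(x_2) - d| \le \delta$ and $|X'(x_2)| \le 2\delta$. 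Consequently $\mathcal{N} \cap B(2)$ is a graph over $x_2$ contained in the $\delta$-thickening of $\mathcal{L}$.

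\medskip

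Fix a constant $c \ge 4$ to be specified, and split into three cases. (1) If $d \ge 1 + 2\delta$ then $\mathcal{N} \cap \overline{B(1)} = \emptyset$; take $D := B(1)$, $\mathcal{C} := \emptyset$, and $\psi := \mathrm{id}_D$, which trivially verifies the conic property. (2) If $d \le 1 - 2\delta$ take $D := B(1)$. (3) If $d \in (1 - 2\delta, 1 + 2\delta)$ take $D := B(1 + c\delta)$. Writing $R$ for the radius of $D$, cases (2) and (3) both satisfy $d \le R - 2\delta$, and combined with $|X'| \le 2\delta$ this implies (for $\delta$ small) that the function $\phi(x_2) := X(x_2)^2 + x_2^2 - R^2$ is strictly monotonic outside a small neighbourhood of $x_2 = 0$, so that $\mathcal{N}$ crosses $\partial D$ transversally in exactly two points $p_\pm = (X(y_\pm), y_\pm)$ with $y_- < 0 < y_+$.

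\medskip

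Take $\mathcal{C}$ to be the straight line through $p_+$ and $p_-$; this is a regular conic section. Condition (ii) of the conic property is immediate by construction. For (iii), define $\psi: D \to D$ by straightening each horizontal slice: on $\{x_2 = t\} \cap D$, send the nodal point $(X(t), t)$ (if present) to the unique point of $\mathcal{C}$ at height $t$, interpolating linearly in $x_1$ and fixing the slice's endpoints on $\partial D$. Then $\psi$ fixes $\partial D$ pointwise, so $|\psi(s) - s| = 0$ for $s \in \mathcal{N} \cap \partial D$. For (i), the bound $|X'| \le 2\delta$ forces the slope of $\mathcal{C}$ (relative to the vertical) to be at most $2\delta$, and since $|X(y_\pm) - d| \le \delta$ and the radius $R$ is bounded above by $2$ (for $\delta$ small), the line $\mathcal{C}$ stays within $O(\delta)$ of $\mathcal{L}$ throughout $D$; combined with $\mathcal{N} \subseteq \mathcal{L}^{\delta}$, this gives $\mathcal{N} \cap D \subseteq \mathcal{C}^{C\delta}$ for an absolute constant $C$. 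Choosing $c := \max(4, C)$ completes the verification of the $c\delta$-conic property in every case.

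\medskip

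The most delicate point is case (3), where $\mathcal{L}$ is nearly tangent to $\partial B(1)$: the virtue of the dichotomy in the lemma is that enlarging to $B(1+c\delta)$ for $c \ge 4$ simultaneously restores transversality of $\mathcal{N}$ with $\partial D$ (needed for (ii)) and keeps the secant line close to $\mathcal{L}$ (needed for (i)), both via the gradient bound $|X'| \le 2\delta$. The convenience of defining $\psi$ to fix $\partial D$ pointwise is that (iii) is rendered trivial, bypassing any delicate estimate on how far the boundary crossings of $\mathcal{N}$ and $\mathcal{C}$ might have to move.
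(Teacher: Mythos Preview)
Your proof is correct, but it takes a genuinely different route from the paper's. The paper fixes the conic section to be the line $\mathcal{L} = f^{-1}(0)$ itself, shows $\mathcal{N} \subseteq \mathcal{L}^\delta$, and then argues monotonicity of $h$ along the two short boundary arcs $\mathcal{L}^\delta \cap \partial D$ (using that the tangential derivative of $f$ dominates $\|g\|_{C^1}$ there) to pin down exactly two crossings; condition~(iii) is then handled by bounding the length of those arcs. You instead parametrise $\mathcal{N}$ globally as a $C^1$ graph via the implicit function theorem, detect the two boundary crossings through the scalar function $\phi(t) = X(t)^2 + t^2 - R^2$, and take $\mathcal{C}$ to be the \emph{secant} through those very crossings. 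This last choice is the key difference: because $p_\pm \in \mathcal{C}$ by construction, your slice-wise homeomorphism fixes $\partial D$ pointwise and condition~(iii) becomes vacuous, while the Lipschitz bound $|X'|\le 2\delta$ forces the secant to stay within $O(\delta)$ of $\mathcal{L}$ and hence of $\mathcal{N}$, giving~(i). The paper's approach is shorter but leans on an arc-length estimate that is delicate in the near-tangent regime $d \approx R$ (where the arcs have length of order $\sqrt{\delta}$ rather than $\delta$); your secant trick sidesteps this entirely and yields a clean $O(\delta)$ in all three conditions. Two cosmetic points: your bound $|X(x_2)-d|\le\delta$ should read $\le \delta/(1-\delta)\le 2\delta$, and the claim that $\phi$ has exactly two zeros deserves one extra sentence noting that $\phi$ stays strictly negative on the small neighbourhood $|x_2| = O(\delta)$ (since $\phi(0) \lesssim -\delta$ while $|\phi'| = O(\delta)$ there), so the monotone pieces each contribute exactly one zero.
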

\begin{proof}
Without loss of generality we may assume that $f(x, y) = y + d$ for some constant $d \ge 0$. Remark that the nodal set $\mathcal{N}$  is contained in the $\delta$-thickened line 
\[ \mathcal{C}^\delta := \{(x, y) : | y + d |  \le  \delta  \} .\]
If $d > 1 + \delta$ then $|\mathcal{C}^\delta \cap B(1)| = 0$, and so it suffices to consider the case $d \le 1 + \delta$ and to set $D := B(1+4\delta)$. Remark that, for sufficiently small $\delta$, the set $\mathcal{C}^\delta \cap \partial D$ consists of two disjoint arcs, and moreover
\[ \inf |  f'|_{\mathcal{C}^\delta \cap \partial D} | \ge |\nabla f| \sqrt{   1 - \left( \frac{1+2\delta}{1+4\delta} \right)^2  } > \delta \ge \|g\|_{C^1(\mathcal{C}^\delta \cap \partial D)}   . \]
Hence $h$ is not stationary along either of the arcs in $\mathcal{C}^\delta \cap \partial D$, and so $\mathcal{N} \cap \partial D$ contains exactly one point in each arc. Since these arcs have length at most $c \delta$ for some $c > 0$, the $c \delta$-conic property is satisfied. 
\end{proof}

\begin{lemma}[Nodal set of a perturbed quadratic function: The elliptic case]
\label{lem:quad1}
Suppose that $f$ is quadratic with Hessian matrix $\nabla^2$ that has eigenvalues $\lambda_1$ and $\lambda_2$ of the same sign such that $|\lambda_1| = |\lambda_2| = 2$. Let $\delta^2 := \|g\|_{C^1(B(2))}$. Then there exists a $c > 0$ such that, for sufficiently small $\delta > 0$, there is a simply-connected domain $D$ with $B(1) \subseteq D \subseteq B(1 + c\delta)$ such that $\mathcal{N}$ satisfies the $ c \delta$-conic property in $D$.
\end{lemma}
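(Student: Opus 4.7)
I would start by applying a rotation of the plane and, if necessary, multiplying both $f$ and $g$ by $-1$. This preserves $\|g\|_{C^1(B(2))}=\delta^2$, preserves the nodal set, and reduces to the case $\nabla^2 f = 2I$, so that $f(x)=|x-x_0|^2+c$ for some $x_0\in\mathbb{R}^2$ and $c\in\mathbb{R}$. The nodal set $\mathcal{C}_r$ of $f$ is then empty (if $c>0$), the single point $\{x_0\}$ (if $c=0$), or the circle of radius $r:=\sqrt{-c}$ centred at $x_0$ (if $c<0$). Observing that $\nabla h(x)=2(x-x_0)+\nabla g(x)$, we see that $h$ has no critical points outside $B(x_0,\delta^2/2)$, and the crude pointwise bound $|f(x)|\le|h(x)|+|g(x)|$ shows $\mathcal{N}\subseteq\{||x-x_0|^2+c|\le\delta^2\}$.

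\textbf{Main case split on $c$.} I would fix a large constant $M$ (chosen later) and split into three regimes. \emph{Regime I, $c>2\delta^2$:} then $h\ge c-\delta^2>0$ on $B(2)$, so $\mathcal{N}\cap B(2)=\emptyset$ and the $c\delta$-conic property holds trivially with conic $\emptyset$ and $D=B(1)$. \emph{Regime II, $c\le -M\delta$:} then $r\ge\sqrt{M\delta}\gg\delta$, and on the annulus $A:=\{x:||x-x_0|-r|\le 2\delta\}$ the radial derivative of $h$ along $(x-x_0)/|x-x_0|$ equals $2|x-x_0|+O(\delta^2)\gtrsim r\gg\delta^2$; hence $h$ is strictly monotone along each ray from $x_0$ through $A$ and vanishes exactly once per ray. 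Thus $\mathcal{N}\cap A$ is a simple closed curve which is a radial graph over $\mathcal{C}_r$ at Hausdorff distance $O(\delta^2/r)=O(\sqrt{\delta/M})$. Outside $A$ the confinement above gives $|f|>\delta^2$ and hence $\mathcal{N}\cap(B(2)\setminus A)=\emptyset$. \emph{Regime III, $-M\delta<c\le 2\delta^2$:} the confinement set lies in $B(x_0,O(\sqrt{\delta}))$, so $\mathcal{N}$ is contained in a small disc around $x_0$ which is far from $\partial B(1)$ whenever $x_0$ is not itself near $\partial B(1)$.

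\textbf{Boundary interaction and construction of the homeomorphism.} For the choice of $D$ I would take $D=B(1+t)$ with $t\in[0,C\delta]$ selected so that $\partial D$ meets $\mathcal{C}_r$ transversally with angle $\gtrsim\sqrt\delta$; this is possible by a general-position argument, since the two-parameter family of circles $\{|x|=1+t\}$ can be tangent to $\mathcal{C}_r$ for at most two values of $t$. Transversality of $\partial D$ and $\mathcal{C}_r$, combined with the radial-monotonicity statement of Regime II, propagates to $\mathcal{N}$ and yields $|\mathcal{N}\cap\partial D|=|\mathcal{C}_r\cap\partial D|$, with matched points at mutual distance $O(\delta)$. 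The required homeomorphism of $D$ is then built by radial interpolation from $x_0$: inside $A$ each nodal point is sent to the corresponding point of $\mathcal{C}_r$ along the same ray, and the map is extended by the identity outside $A$ (and trivially to $\partial D$, using the $O(\delta)$ matching).

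\textbf{Main obstacle: Regime III.} The genuinely subtle case is Regime III, where $\mathcal{C}_r$ degenerates (a point, empty, or a tiny circle of radius $\lesssim\sqrt\delta$), and where $h$ may have critical points on its nodal set since we only have $C^1$ control on $g$. The plan here is to exploit the flexibility in choosing the approximating regular conic: since the nodal set is confined to $B(x_0,O(\sqrt\delta))$, one can take as conic a small circle centred at a slightly perturbed point $\tilde x_0\in B(x_0,O(\delta))$ with radius $\tilde r\in[0,O(\sqrt\delta)]$, chosen so that $\tilde r^2\approx -c-g(\tilde x_0)$. A radial-monotonicity argument analogous to Regime II — applied now on the shrunken scale $\sqrt\delta$, where $\nabla f$ still dominates $\nabla g$ on the relevant annulus around $\tilde x_0$ — shows that $\mathcal{N}$ is either empty or a single simple closed curve $O(\delta)$-close to this conic. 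The only borderline possibility (an isolated nodal point) is absorbed by the $c\delta$-thickening tolerance, taking the conic to be empty and noting that such a singular $\mathcal{N}$ has diameter $O(\delta)$.
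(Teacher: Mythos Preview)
Your outline is largely sound and follows the same skeleton as the paper's proof: reduce to $f(x)=|x-x_0|^2-d$, confine $\mathcal N$ to a thin annulus, and exploit that $|\nabla f|$ dominates $\|g\|_{C^1}$ there. The difference is that you split into three regimes at the threshold $d=M\delta$, whereas the paper splits into only two regimes at the much sharper threshold $d=2\delta^2$, and this single observation eliminates your Regime III entirely. Indeed, when $d\ge 2\delta^2$ every point of the confinement set $\{||x-x_0|^2-d|\le\delta^2\}$ satisfies $|x-x_0|\ge\sqrt{d-\delta^2}\ge\delta$, so $|\nabla f|\ge 2\delta>\delta^2=\|g\|_{C^1(B(2))}$ throughout; this is already enough for gradient domination, without requiring $d\ge M\delta$. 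When $d<2\delta^2$ the confinement set sits inside $B(x_0,\sqrt{3}\,\delta)$ --- radius $O(\delta)$, not $O(\sqrt\delta)$ --- and one of $D=B(1)$ or $D=B(1+4\delta)$ has boundary disjoint from it. So your ``genuinely subtle case'' is an artefact of a loose threshold: the part of Regime III with $d\ge 2\delta^2$ is covered by your own Regime II argument, and the remaining sliver is exactly the paper's small-ball case.

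A second, smaller difference is the construction of $D$. Rather than choosing $D=B(1+t)$ for a generic $t$ and invoking a transversality argument, the paper simply deforms $\partial B(1)$ along the gradient flow lines of $f$ (i.e.\ rays from $x_0$) wherever it crosses the confinement annulus. Since $|\nabla f|>|\nabla g|$ on that annulus, $h$ is strictly monotone along each such ray and hence vanishes at most once, giving $|\mathcal N\cap\partial D|\le 2$ directly; the annulus has radial width at most $\delta$, so the deformation stays inside $B(1+c\delta)$. This is quicker than your general-position step and yields a domain $D$ that need not be a round ball.
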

\begin{proof}
Without loss of generality we may assume that 
\[ f(x, y) = (x - x_0)^2 + (y - y_0)^2 - d \]
for some $x_0, y_0 \in \mathbb{R}^2$ and $d \in \mathbb{R}$. Remark that the nodal set $\mathcal{N} $ is contained in the set 
\[ \mathcal{C}^\delta := \left\{(x, y) : | (x - x_0)^2 + (y - y_0)^2 | \in (d - \delta^2, d + \delta^2 ) \right\}   ,\]
which is contained in a $\delta$-thickened circle for sufficiently small $\delta$. If $d < 2 \delta^2$, then $\mathcal{C}^\delta$ is contained in a ball of radius $2\delta$, and so at least one of $D = B(1)$ or $D = B(1 + 4\delta)$ is such that $|\mathcal{C}^\delta \cap \partial D| = 0$. Suppose instead that $d \ge 2 \delta^2 $. Then, on the set $\mathcal{C}^\delta$, we have for sufficiently small $\delta > 0$, 
\[ |\nabla f| \ge 2 \sqrt{2 \delta^2 - \delta^2} = 2 \delta > \delta^2 =: \|g\|_{C^1(B(2))}. \]
Hence we may find a $D$ such that $|\mathcal{N} \cap \partial D| \le 2$, by either taking $D = B(1)$ or the smallest set that contains $B(1)$ such that $\partial D$ passes through $\mathcal{C}^\delta$ along gradient flow lines of $f$. Since these gradient flow lines have length at most $c \delta$ for some $c > 0$, such a set is in $B(1 + c \delta)$. Moreover, since $\mathcal{N} \cap \partial D$ contains at most one point in each gradient flow line that is traversed in this procedure, the $c \delta$-conic property is satisfied.
\end{proof}

\begin{lemma}[Nodal set of a perturbed quadratic function: The hyperbolic case]
\label{lem:quad2}
Suppose that $f$ is quadratic with Hessian matrix $\nabla^2$ that has eigenvalues $\lambda_1$ and $\lambda_2$ of opposite signs such that $|\lambda_1| = |\lambda_2| = 2$. Let $\delta^2 := \|g\|_{C^1(B(2))}$. Then there exists a $c > 0$ such that, for sufficiently small $\delta > 0$, there exists a simply-connected domain $D$ with $B(1) \subseteq D \subseteq B(1 + c\delta)$ such that $\mathcal{N}$ satisfies the $c \delta$-conic property in $D$.
\end{lemma}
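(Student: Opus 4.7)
The plan is to mirror the strategy of Lemma \ref{lem:quad1}, splitting the analysis into two regimes according to the location of the saddle of $f$, and using a suitable rectangular hyperbola as the approximating regular conic section. After a rigid motion I may assume $f(x,y) = (x-x_0)^2 - (y-y_0)^2 - d$ for some $x_0, y_0, d \in \mathbb{R}$, so that $\mathcal{C} := f^{-1}(0)$ is a rectangular hyperbola (degenerating to two perpendicular lines when $d=0$) and $|\nabla f(x,y)| = 2 r(x,y)$, where $r(x,y) := |(x-x_0, y-y_0)|$ is the distance to the saddle. Since $\|g\|_{C^0(B(2))} \leq \delta^2$, the nodal set $\mathcal{N}$ is contained in the sublevel set $\{|f| \leq \delta^2\}$.

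In the easy regime, when either $|(x_0, y_0)| \geq 2$ or $|d| \geq C \delta^2$ for a sufficiently large constant $C$, one has $|\nabla f| \geq c \max(1, \delta)$ on $\{|f| \leq \delta^2\} \cap B(2)$, which dominates $\|\nabla g\|_{C^0} \leq \delta^2$. Hence $h$ is monotone along the gradient flow lines of $f$ and $\mathcal{N}$ lies within distance $O(\delta)$ of $\mathcal{C}$. By B\'{e}zout's theorem the hyperbola meets $\partial B(2)$ in at most four points, so $\mathcal{C} \cap B(2)$ consists of at most two arcs. Following the recipe of Lemma \ref{lem:quad1}, I extend $\partial B(1)$ outward along the gradient flow lines of $f$ across each of the tubes where $|f| \leq 2\delta^2$; these flow segments have length $O(\delta)$, so the resulting $D$ satisfies $B(1) \subseteq D \subseteq B(1+c\delta)$ and $\mathcal{N} \cap \partial D$ matches $\mathcal{C} \cap \partial D$. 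The conic section is $\mathcal{C}$ itself.

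In the hard regime, when $(x_0, y_0) \in B(2)$ and $|d| < C \delta^2$, the hyperbola is near-degenerate and $|\nabla f|$ may be arbitrarily small. I first locate a critical point of $h$ via Brouwer's fixed point theorem applied to the continuous self-map
\[
(x, y) \;\longmapsto\; (x_0, y_0) - (\nabla^2 f)^{-1}\nabla g(x, y)
\]
of the closed disc $\bar{B}((x_0, y_0), \delta^2/2)$, which is well-defined because $\|(\nabla^2 f)^{-1}\| = 1/2$ and $\|\nabla g\|_{C^0} \leq \delta^2$, and whose fixed points are exactly the critical points of $h$ since $\nabla f$ is affine with linear part $\nabla^2 f$. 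Denote such a critical point by $(x_1, y_1)$. The Taylor expansion around $(x_1, y_1)$, exploiting that $f$ is quadratic and $\nabla h(x_1, y_1) = 0$, yields
\[
h(x_1+u, \, y_1+v) = e + (u^2 - v^2) + \tilde g(u,v),
\]
where $e := h(x_1, y_1)$ and $\tilde g(u,v) := g(x_1+u, y_1+v) - g(x_1, y_1) - \nabla g(x_1, y_1) \cdot (u,v)$ vanishes at the origin along with its gradient and satisfies $\|\nabla \tilde g\|_{C^0(B(2))} \leq 2\delta^2$. I take the conic section to be the rectangular hyperbola
\[
\mathcal{C}' := \{(x,y) : (x-x_1)^2 - (y-y_1)^2 = -e\}
\]
(replacing $e$ by a nearby nonzero value of the topologically-correct sign, if $e$ is too close to $0$). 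The gradient argument from the easy regime shows $\mathcal{N} \subseteq (\mathcal{C}')^{c\delta}$ outside an $O(\delta)$-ball around $(x_1, y_1)$, and inside that ball the inclusion is automatic because both $\mathcal{N}$ and $\mathcal{C}'$ lie in the $O(\delta)$-thickening of the two perpendicular lines through $(x_1, y_1)$. The domain $D$ is built as in the easy regime by pushing $\partial B(1)$ outward along gradient flow lines of the quadratic part of $h$; since the saddle lies well inside $B(1) \subseteq D$, no issues arise at the boundary.

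The main obstacle is the borderline sub-case $|e| = O(\delta^2)$, in which the sign of $e$ -- and hence the orientation of $\mathcal{C}'$ -- is sensitive to the perturbation $g$. The resolution is that in this case the saddle sits in the interior of $D$, the boundary intersections $\mathcal{C}' \cap \partial D$ are the same for either orientation of $\mathcal{C}'$, and the choice of orientation affects only the connectivity of $\mathcal{C}'$ inside a small $O(\delta)$-ball around the saddle. This local connectivity is uniquely determined by $\mathcal{N}$, and matching it to $\mathcal{C}'$ via a local homeomorphism supported in that ball yields the required global homeomorphism $\phi: D \to D$.
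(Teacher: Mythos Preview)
Your argument is essentially correct, but it is considerably more elaborate than the paper's. The paper never splits on the size of $|d|$, never invokes Brouwer, and never recenters at a critical point of $h$. Instead it makes the single observation that outside the ball $\mathcal{B} := B((x_0,y_0),\delta^2)$ around the saddle of $f$ one has $|\nabla f| > 2\delta^2 > \delta^2 = \|g\|_{C^1}$, so $h$ is monotone along the gradient flow lines of $f$ everywhere except on this tiny ball. The only case distinction is whether $\mathcal{B}$ meets $B(1)$, and in either case $D$ is built by pushing $\partial B(1)$ along gradient flow lines of $f$ itself, yielding $|\mathcal{N}\cap\partial D|\le 2$ or $|\mathcal{N}\cap\partial D|=4$ respectively. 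The orientation ambiguity you treat carefully in your borderline sub-case $|e|=O(\delta^2)$ is absorbed by the paper into the bare assertion that $\mathcal{C}^\delta$ lies in a $\delta$-thickening of \emph{some} regular conic section (the degenerate cross of two perpendicular lines works for all small $d$), together with the homeomorphism clause in the $\delta$-conic property.

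Your route buys a more explicit identification of the approximating hyperbola and makes the resolution of the orientation issue transparent, at the cost of the Brouwer and recentering machinery; the paper's route is shorter but terser. One minor gap in your write-up: the ``easy regime'' threshold $|(x_0,y_0)|\ge 2$ should be tightened (e.g.\ to $|(x_0,y_0)|\ge 1+c_0\delta$ for a suitable $c_0$), since with your stated threshold the saddle can sit on $\partial B(2)$ where your claimed gradient lower bound on $\{|f|\le\delta^2\}\cap B(2)$ fails; correspondingly, in the hard regime your assertion that ``the saddle lies well inside $B(1)$'' is not justified when $(x_0,y_0)\in B(2)\setminus B(1)$. Adjusting the threshold fixes both issues.
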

\begin{proof}
Without loss of generality we may assume that 
\[ f(x, y) = -(x - x_0)^2 + (y - y_0)^2 + d \]
for some $x_0, y_0 \in \mathbb{R}^2$ and $d \ge 0$. Remark that the nodal set $\mathcal{N} $ is contained in the set 
\[ \mathcal{C}^\delta := \left\{(x, y) : | (x - x_0)^2 - (y - y_0)^2 | \in (d - \delta^2, d + \delta^2 ) \right\}   ,\]
which is contained in a $\delta$-thickened regular conic section for sufficiently small $\delta$. Moreover, outside the ball $\mathcal{B} := B( (x_0, y_0), \delta^2)$,
\[ |\nabla f| >  2 \delta^2 > \delta^2 =: \|g\|_{C^1(B(2))}. \]
Now, either $B(1)$ does not intersect $\mathcal{B}$, in which case we may either take $D = B(1)$ or a $D$ such that $\partial D$ passes through $\mathcal{C}^\delta$ outside $\mathcal{B}$ and along gradient flow lines of~$f$; in both these cases $|\mathcal{N} \cap \partial D| \le 2$. Or else $B(1)$ does intersect $\mathcal{B}$, and we may take $D$ such that $\partial D$ passes through $\mathcal{C}^\delta$ outside $\mathcal{B}$ and along gradient flow lines of $f$, and such that $|\mathcal{N} \cap \partial D| = 4$. Since $\mathcal{N} \cap \partial D$ contains at most one point in each gradient flow line that is traversed in this procedure, and since the gradient glow lines have length at most $c \delta$ for some $c > 0$, in both these cases $D \subseteq B(1 + c\delta)$ and the $c\delta$-conic property is satisfied.
\end{proof}

The upshot of Lemmas \ref{lem:lin}--\ref{lem:quad2} is the following general criteria for a $C^3$ function $\Psi : \mathbb{R}^2 \to \mathbb{R}$ to have a nodal set that locally satisfies the $\delta$-conic property for some small $\delta$. 

\begin{corollary}
\label{cor:pert}
There exists a $c > 0$ such that, for sufficiently small $\delta > 0$, the following holds. Let $\Psi: \mathbb{R}^2 \to \mathbb{R}$ be a $C^3$ function, with $\mathcal{N}$ its nodal set. Let $\lambda_1$ and $\lambda_2$ denote the eigenvalues of the Hessian matrix $\nabla^2 \Psi(0)$. Suppose that, for some $\varepsilon > 0$, either
\[  \frac{\|\Psi\|_{C^2(B(2 \varepsilon))}}{ |\nabla \Psi(0)| } < \frac{\delta}{4 \varepsilon} \]
or $\lambda_1$ and $\lambda_2$ are both non-zero and
\[  \left| 1 - \frac{|\lambda_1|}{|\lambda_2|} \right|  < 2\delta \quad \text{and} \quad   \frac{\|\Psi\|_{C^3(B(2 \varepsilon))}}{\max\{ |\lambda_1|, |\lambda_2| \} }    < \frac{\delta^2}{16 \varepsilon}     . \]
Then there exists a simply-connected domain $D$ with $B(\varepsilon) \subseteq D \subseteq B((1 + c\delta)\varepsilon)$ such that $\mathcal{N}$ satisfies the $c \delta \varepsilon$-conic property in~$D$.
\end{corollary}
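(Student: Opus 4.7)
The plan is to reduce each of the corollary's two alternative hypotheses to Lemma \ref{lem:lin} or one of Lemmas \ref{lem:quad1}--\ref{lem:quad2} by rescaling space, so that the Taylor polynomial of $\Psi$ at the origin (linear in case~(a), quadratic in case~(b)) becomes the approximating function $f$ in the relevant lemma and the Taylor remainder becomes the perturbation $g$. Since $\mathcal N$ is invariant under multiplication of $\Psi$ by a non-zero scalar, we may freely rescale function values to match the specific normalisations the lemmas demand. For case~(a), I would set $\tilde\Psi(y):=\Psi(\varepsilon y)/(\varepsilon|\nabla\Psi(0)|)$, let $\tilde f$ be its linear Taylor polynomial at the origin (so that $|\nabla\tilde f|=1$ automatically), and put $\tilde g:=\tilde\Psi-\tilde f$. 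Second-order Taylor together with the hypothesis of~(a) gives $\|\tilde g\|_{C^1(B(2))}\le C\varepsilon\|\Psi\|_{C^2(B(2\varepsilon))}/|\nabla\Psi(0)|<C'\delta$, so Lemma \ref{lem:lin} produces a $\tilde D$ with $B(1)\subseteq\tilde D\subseteq B(1+c\delta)$ on which $\tilde{\mathcal N}$ is $c\delta$-conic. Undoing the isotropic rescaling $y\mapsto\varepsilon y$ transports this verbatim to the required $c\delta\varepsilon$-conic property on a $D$ with $B(\varepsilon)\subseteq D\subseteq B((1+c\delta)\varepsilon)$.

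Case~(b) is more delicate because Lemmas \ref{lem:quad1}--\ref{lem:quad2} demand Hessian eigenvalues of magnitude exactly $2$, while the hypothesis of~(b) only ensures that $|\lambda_1|$ and $|\lambda_2|$ agree to within relative error $2\delta$. The natural remedy is an \emph{anisotropic} rescaling: after rotating coordinates so that $\nabla^2\Psi(0)=\operatorname{diag}(\lambda_1,\lambda_2)$ with $|\lambda_1|\le|\lambda_2|$, I set $r:=\sqrt{|\lambda_2/\lambda_1|}\in[1,1+O(\delta))$ and define $\tilde\Psi(\tilde x,\tilde y):=\frac{2}{|\lambda_2|\varepsilon^2}\,\Psi(r\varepsilon\tilde x,\varepsilon\tilde y)$. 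A short calculation shows $\nabla^2\tilde\Psi(0)=\operatorname{diag}(\pm 2,\pm 2)$ with the same sign pattern as $\operatorname{diag}(\lambda_1,\lambda_2)$. Letting $\tilde f$ be the quadratic Taylor polynomial of $\tilde\Psi$ at the origin and $\tilde g:=\tilde\Psi-\tilde f$, the third-order Taylor estimate combined with the hypothesis of~(b) gives $\|\tilde g\|_{C^1(B(2))}<C\delta^2$. Accordingly, Lemma \ref{lem:quad1} or \ref{lem:quad2} (depending on whether $\lambda_1$ and $\lambda_2$ share a sign) produces a $c\delta$-conic approximation $\tilde{\mathcal C}$, namely a circle or right-angled hyperbola, to $\tilde{\mathcal N}$ on a $\tilde D$ with $B(1)\subseteq\tilde D\subseteq B(1+c\delta)$.

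The main obstacle is the pull-back in case~(b). Since the rescaling $(\tilde x,\tilde y)\mapsto(r\varepsilon\tilde x,\varepsilon\tilde y)$ is anisotropic, the pre-image of $\tilde{\mathcal C}$ is an ellipse of eccentricity $r$ or a hyperbola with non-right-angled asymptotes, which is itself \emph{not} a regular conic section, so the lemma's conclusion cannot simply be transported back. Exploiting $r=1+O(\delta)$, however, this pre-image lies in a $c\delta\varepsilon$-thickening of a genuine regular conic section (a concentric circle of the ellipse's mean radius, or the rectangular hyperbola obtained by symmetrising the asymptotes) and can be sent onto it by a further near-identity homeomorphism of displacement at most $c\delta\varepsilon$; composing this with the pull-back of the homeomorphism supplied by the lemma produces the $c\delta\varepsilon$-conic property in the original coordinates. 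The resulting $D$ satisfies $B(\varepsilon)\subseteq D\subseteq B((1+c'\delta)\varepsilon)$, because the image of $B(1)$ under the anisotropic scaling is an ellipse with semi-axes $\varepsilon$ and $r\varepsilon\in[\varepsilon,(1+c\delta)\varepsilon]$. One remaining bookkeeping item -- that the image of $B(2)$ under the anisotropic scaling is contained in $B(2r\varepsilon)$ rather than strictly in $B(2\varepsilon)$, so Taylor's theorem is formally applied on a slightly larger ball -- is handled by running the argument with $\varepsilon/(1+c\delta)$ in place of $\varepsilon$ and absorbing the discrepancy into the constant $c$.
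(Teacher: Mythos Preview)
Your proposal is correct and follows essentially the same approach as the paper's proof: an isotropic rescaling reducing case~(a) to Lemma~\ref{lem:lin}, and an anisotropic rescaling (stretching the direction of the smaller eigenvalue by a factor $\sqrt{|\lambda_{\max}/\lambda_{\min}|}\in[1,1+O(\delta))$) reducing case~(b) to Lemmas~\ref{lem:quad1}--\ref{lem:quad2}. The only presentational difference is that the paper packages your ``pull-back obstacle'' paragraph as a separate statement (Lemma~\ref{lem:cone}), whereas you argue it directly; your bookkeeping remark about $B(2r\varepsilon)$ versus $B(2\varepsilon)$ is in fact more careful than the paper, which silently absorbs this discrepancy.
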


\begin{proof}
Assume that the first condition holds, and define
\[  h(x, y) :=  \frac{\Psi(\varepsilon x, \varepsilon y) }{\varepsilon |\nabla \Psi(0)|  }  \ , \quad f(x, y) := \frac{ \Psi(0) +  (\varepsilon x, \varepsilon y)  \cdot \nabla \Psi(0) }{\varepsilon |\nabla \Psi(0)|  } \]
and $g(x, y) := h(x, y) - f(x, y)$. Remark that the nodal set of $h$ is $\varepsilon^{-1} \mathcal{N}$. Remark also that $f$ is linear with $|\nabla f(0)| = 1$, and further, by Taylor's theorem,
\[  \|g\|_{C^1(B(2))} \le   (2\varepsilon)^2 \|\Psi\|_{C^2(B(1))}  \frac{1}{\varepsilon |\nabla \Psi(0)|  }   <  \delta  .   \]
By Lemma \ref{lem:lin}, and after rescaling by $\varepsilon$, we deduce the result.

Assume instead that the second condition holds. Without loss of generality we may assume the the eigenvectors corresponding to $\lambda_1$ and $\lambda_2$ are in the direction of the $x$ and $y$ axes respectively, and that $|\lambda_1| \ge |\lambda_2|$. Let $\mu := \sqrt{|\lambda_1/\lambda_2|}$, and note that the second condition together with the inequality $\sqrt{1-x} < 1- x/2$, valid for $x \in (0,1)$, implies that $\mu \in [1, 1+\delta)$. Similarly to before, define 
\[ h(x, y) := \frac{\Psi(\varepsilon x, \mu \varepsilon y)}{ \varepsilon^2  |\lambda_1| /2}\]
and let $f(x, y)$ and $g(x, y)$ be respectively the second-order Taylor polynomial of $h$ and its remainder. Then, $f$ is quadratic whose Hessian matrix has eigenvalues $\mu_1, \mu_2$ satisfying $|\mu_1| = |\mu_2| = 2$. Moreover, by Taylor's theorem,
\[  \|g\|_{C^2(B(2))} \le  (2\varepsilon)^3  \|\Psi\|_{C^3(B(1))} \frac{1}{\varepsilon^2  |\lambda_1| /2 } <   \delta^2  .   \]
Writing $\mathcal{N}^h$ for the nodal set of $h$, by Lemmas \ref{lem:quad1} and \ref{lem:quad2} we deduce that there exists a $c_0 > 0$ and a simply-connected domain $D$ satisfying $B(1) \subseteq D \subseteq B(1 + c_0\delta)$ such that $\mathcal{N}^h$ satisfies the $c_0 \delta$-conic property in $D$. The proof is then concluded by applying Lemma \ref{lem:cone} below, remarking that the nodal set $\mathcal{N}$ results from applying the rescaling $(x, y) \mapsto \varepsilon (x,  \mu y)$ to $\mathcal{N}^h$. 
\end{proof}

\begin{lemma}
\label{lem:cone} 
Fix $c_0 > 0$. Then there exists a $c = c(c_0) > 0$ such that, for sufficiently small $\delta > 0$ and any $\varepsilon > 0$ the following holds. Define $\mu \in [1, 1 + \delta)$ and let $T^\varepsilon_\mu : \mathbb{R}^2 \to \mathbb{R}^2$ be the transformation $(x, y) \mapsto \varepsilon (x, \mu y)$. Suppose that $\mathcal{S}$ satisfies the $c_0 \delta$-conic property in the set $D \subseteq B(1 + c_0 \delta)$. Then $T^\varepsilon_\mu(\mathcal{S})$ satisfies the $c \delta \varepsilon$-conic property in the set $T^\varepsilon_\mu(D) \subseteq B((1 + c\delta)\varepsilon)$.
\end{lemma}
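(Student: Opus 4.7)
The map $T^\varepsilon_\mu$ factors as the composition of the anisotropic scaling $T_\mu : (x,y) \mapsto (x, \mu y)$ with the uniform rescaling $p \mapsto \varepsilon p$. Since uniform rescaling preserves all four types of regular conic section and multiplies every distance (and hence every $\delta$-thickening) by $\varepsilon$, the $\delta$-conic property scales linearly: if $\mathcal{S}$ has the $\delta$-conic property in $D$ then $\varepsilon \mathcal{S}$ has the $\varepsilon\delta$-conic property in $\varepsilon D$. It therefore suffices to prove the perturbative statement with $\varepsilon = 1$: if $\mathcal{S}$ has the $c_0\delta$-conic property in $D \subseteq B(1 + c_0\delta)$, then $T_\mu(\mathcal{S})$ has the $c\delta$-conic property in $T_\mu(D)$. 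The inclusion $T_\mu(D) \subseteq B(1 + c\delta)$ is immediate from $|T_\mu(p) - p| = (\mu - 1)|y| \le \delta(1 + c_0 \delta)$ on $D$.

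For the approximating regular conic section, let $\mathcal{C}$ be the one supplied by the $c_0 \delta$-conic property of $\mathcal{S}$, and define $\mathcal{C}'$ case by case. If $\mathcal{C}$ is empty or a line, take $\mathcal{C}' := T_\mu(\mathcal{C})$, which is again a regular conic section of the same type. If $\mathcal{C}$ is a circle, $T_\mu(\mathcal{C})$ is an ellipse with semi-axis ratio $\mu \in [1, 1 + \delta)$, and we take $\mathcal{C}'$ to be a concentric circle lying within Hausdorff distance $O(\delta)$ of the ellipse. If $\mathcal{C}$ is a regular hyperbola, which in principal axes has the form $x^2 - y^2 = c$, then $T_\mu(\mathcal{C})$ satisfies $\mu^2 x^2 - y^2 = c$; this lies within Hausdorff distance $O(\delta)$ of the regular hyperbola $\mathcal{C}' := \{x^2 - y^2 = c\}$ on the bounded set $T_\mu(D)$. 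Condition (i), namely $T_\mu(\mathcal{S}) \subseteq (\mathcal{C}')^{c \delta}$, then follows by combining $\mathcal{S} \subseteq \mathcal{C}^{c_0 \delta}$, the operator bound $\|T_\mu\|_{\mathrm{op}} = \mu \le 1+\delta$, and the Hausdorff closeness of $T_\mu(\mathcal{C})$ to $\mathcal{C}'$.

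For the homeomorphism, conjugate the map $h : D \to D$ supplied by the $c_0 \delta$-conic property to obtain $h_1 := T_\mu \circ h \circ T_\mu^{-1} : T_\mu(D) \to T_\mu(D)$. This sends $T_\mu(\mathcal{S}) \cap T_\mu(D)$ onto $T_\mu(\mathcal{C}) \cap T_\mu(D)$ and satisfies $|h_1(t) - t| = |T_\mu(h(s)) - T_\mu(s)| \le \mu c_0 \delta$ for $t = T_\mu(s) \in T_\mu(\mathcal{S}) \cap \partial T_\mu(D)$. Then compose with an auxiliary homeomorphism $\phi : T_\mu(D) \to T_\mu(D)$ that sends $T_\mu(\mathcal{C}) \cap T_\mu(D)$ onto $\mathcal{C}' \cap T_\mu(D)$, equals the identity on $\partial T_\mu(D)$, and displaces every point by at most $O(\delta)$; this is constructed using a tubular neighborhood of $T_\mu(\mathcal{C})$ together with a partition-of-unity cut-off near the boundary. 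The composition $\phi \circ h_1$ is the required homeomorphism, and condition (iii) follows from the triangle inequality applied to the two displacement bounds.

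The main obstacle is verifying condition (ii) --- that $|\mathcal{C}' \cap \partial T_\mu(D)| = |T_\mu(\mathcal{C}) \cap \partial T_\mu(D)|$ --- which reduces to a transversal-stability argument for the one-parameter deformation of $T_\mu(\mathcal{C})$ into $\mathcal{C}'$ through the $O(\delta)$-neighborhood. For smooth $\partial T_\mu(D)$ this is generic, and in the application the domain $D$ can be chosen so that $T_\mu(\mathcal{C})$ meets $\partial T_\mu(D)$ transversally; the hyperbola case, with up to four intersection points distributed across two branches, requires the most care, but the analysis localizes to neighborhoods of each intersection where both curves are graphs of smooth functions with bounded derivatives differing by $O(\delta)$.
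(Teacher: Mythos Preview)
The paper's proof of this lemma is a single sentence: ``This is clear from the definition of the $\delta$-conic property.'' Your proposal is therefore far more detailed than what the paper provides, and the overall strategy---factor $T^\varepsilon_\mu$ into the near-identity anisotropic map $T_\mu$ and a uniform rescaling, then verify each of the three conditions of the $\delta$-conic property for the image---is sound and is presumably what the authors had in mind.

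Two minor technical remarks. First, your treatment of the hyperbola case tacitly assumes the principal axes of $\mathcal{C}$ are aligned with the $(x,y)$-axes on which $T_\mu$ acts; the lemma as stated does not guarantee this, although in its only application (Corollary~\ref{cor:pert}) the axes are aligned by construction. A uniform choice that avoids the case split altogether is $\mathcal{C}' := \varepsilon \mathcal{C}$: uniform scaling preserves all four types of regular conic section, and since $|T_\mu(p) - p| \le \delta(1 + c_0\delta)$ on $D$, the image $T^\varepsilon_\mu(\mathcal{C})$ lies within $O(\delta\varepsilon)$ of $\varepsilon\mathcal{C}$, which handles condition~(i) directly. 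Second, you correctly identify condition~(ii) as the only step requiring real care; your transversality sketch is the right idea, but strictly speaking the lemma as stated does not hypothesise that $\mathcal{C}$ meets $\partial D$ transversally, so one must either read the lemma in the context of Lemmas~\ref{lem:lin}--\ref{lem:quad2} (where transversality is built into the construction of $D$) or supply a short additional argument. The paper does not address this point either.
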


\begin{proof}
This is clear from the definition of the $\delta$-conic property.
\end{proof}

\subsection{Application of the perturbation analysis to plane waves}
\label{sec:pert2}

We now show how to apply the above perturbation analysis to plane waves, i.e.\ solutions to the Helmholtz equation~\eqref{eq:helm}. In particular, we deduce sufficient conditions under which the nodal set locally satisfies the $\delta$-conic property for some small $\delta>0$.
 
We begin by explaining the heuristics of the approach. Let $\Psi: \mathbb{R}^2 \to \mathbb{R}$ satisfy equation \eqref{eq:helm} for frequency parameter $k > 0$, with nodal set $\mathcal{N}$. Let $\lambda_1$ and $\lambda_2$ be the eigenvalues of the Hessian matrix $\nabla^2 \Psi(0)$. Corollary \ref{cor:pert} suggests that, outside an error event on which both $\nabla \Psi (0)$ \textit{and} the eigenvalues $\lambda_1$ and $\lambda_2$ are very small (relative to the~$C^3$ norm of $\Psi$ near the origin), the nodal set of $\Psi$ satisfies the $\delta$-conic property as long as the ratio $|\lambda_1/\lambda_2|$ is close to one. To make the link with plane waves, observe in addition to the above that unless~$\Psi(0)$ is also small, the nodal set will be empty. On the other hand, if $\Psi(0)$ is small, then since \eqref{eq:helm} implies that $\Psi(0)$ is proportional to $\lambda_1 + \lambda_2$, the ratio $|\lambda_1/\lambda_2|$ must be close to one (outside the aforementioned error event that $\lambda_1$ and $\lambda_2$ are both very small), and so the $\delta$-conic property holds. Formalising the above, we deduce the following result.

\begin{corollary}[Sufficient conditions for nodal set of a plane wave to be locally well-approximated by a regular conic section]
\label{cor:pert2}
There exists a $c = c(k) > 0$ such that, for sufficiently small $\delta > 0$, the following holds. For each $\varepsilon > 0$, if the conditions
\[  |\nabla \Psi(0)| <  c \varepsilon  \|\Psi\|_{C^2(B(2\varepsilon))}    \,  , \    \max\{ |\lambda_1|, |\lambda_2|  \} <  c \varepsilon \|\Psi\|_{C^3(B(2 \varepsilon))}    \]
and
\[    |\lambda_1 + \lambda_2|  < c \varepsilon^2 \|\Psi\|_{C^2(B(2\varepsilon))}   , \] 
do not all hold, then there exists a simply-connected domain $D$ with $B(\varepsilon) \subseteq D \subseteq B((1 + c\delta)\varepsilon)$ such that $\mathcal{N}$ satisfies the $c \delta \varepsilon$-conic property in~$D$.
\end{corollary}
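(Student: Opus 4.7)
The plan is to deduce Corollary~\ref{cor:pert2} from Corollary~\ref{cor:pert} using the Helmholtz equation \eqref{eq:helm}, which at the origin reads $\lambda_1+\lambda_2 = \Delta\Psi(0) = -k^2\Psi(0)$. The third hypothesis of Corollary~\ref{cor:pert2} is therefore really a bound on $|\Psi(0)|$. A preliminary rescaling $\tilde\Psi(s):=\Psi(s/k)$ normalises the equation to $k=1$ at the cost of absorbing powers of $k$ into the constant $c(k)$, which simplifies the bookkeeping.

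I would then case-split on which of the three conditions fails. If the first fails, so $|\nabla\Psi(0)|\ge c\varepsilon\|\Psi\|_{C^2(B(2\varepsilon))}$, then for $c$ taken large enough relative to $1/\delta$ (allowed once $\delta$ is sufficiently small) the first case of Corollary~\ref{cor:pert} applies verbatim. If instead the first condition holds but the second fails, so $|\nabla\Psi(0)|$ is small while $\max\{|\lambda_1|,|\lambda_2|\}$ is relatively large, I aim for the second case of Corollary~\ref{cor:pert}, which requires $\bigl||\lambda_1|/|\lambda_2|-1\bigr|<2\delta$. Here the Helmholtz identity is decisive: if the third condition also holds, then $|\lambda_1+\lambda_2|$ is much smaller than $\max\{|\lambda_1|,|\lambda_2|\}$, forcing the eigenvalues to have opposite signs with nearly equal magnitudes and giving the required ratio bound. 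Finally, if the first two conditions hold but the third fails, then $|\Psi(0)|=|\lambda_1+\lambda_2|/k^2$ is bounded below by a constant multiple of $\varepsilon^2\|\Psi\|_{C^2}$, and a direct Taylor expansion argument should show that $\Psi$ cannot vanish in $B((1+c\delta)\varepsilon)$, so that the $c\delta\varepsilon$-conic property is satisfied trivially with $\mathcal{C}=\emptyset$.

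The main obstacle I expect is the careful balancing of constants: the three hypothesised conditions involve different powers of $\varepsilon$ and different Sobolev norms, while the single constant $c=c(k)$ must make all cases close simultaneously. In particular, in the third case the Taylor bound
\[
|\Psi(s)| \ge |\Psi(0)| - \varepsilon|\nabla\Psi(0)| - \tfrac12\max|\lambda_i|\varepsilon^2 - O\bigl(\varepsilon^3\|\Psi\|_{C^3(B(2\varepsilon))}\bigr)
\]
barely closes, and the gain $|\Psi(0)|\gtrsim\varepsilon^2\|\Psi\|_{C^2}/k^2$ must genuinely dominate the competing error terms permitted by the first two conditions; this pins down the admissible range of $c(k)$ and of $\delta$. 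Likewise, in the sub-case where the first condition holds, the second fails, and the third also fails, I would either argue directly as in the third case (since $|\Psi(0)|$ is again large) or exploit the sign structure $\lambda_1+\lambda_2=-k^2\Psi(0)$ to localise further.
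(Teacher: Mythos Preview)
Your overall strategy matches the paper's: case-split on which condition fails, invoke the linear case of Corollary~\ref{cor:pert} when the gradient is large, invoke the quadratic case when $\max|\lambda_i|$ is large (using Helmholtz to force $|\lambda_1|/|\lambda_2|\approx 1$), and argue that the nodal set is empty when $|\Psi(0)|$ is large. The paper proceeds in exactly this way.

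There is, however, a genuine gap in your third case. You propose to show directly, via
\[
|\Psi(s)| \;\ge\; |\Psi(0)| - \varepsilon|\nabla\Psi(0)| - \tfrac12\max|\lambda_i|\,\varepsilon^2 - O\bigl(\varepsilon^3\|\Psi\|_{C^3}\bigr),
\]
that $\Psi$ does not vanish on $B((1+c\delta)\varepsilon)$. But the error terms here are bounded using conditions (A) and (B), which give contributions of size $c\varepsilon^2\|\Psi\|_{C^2}$ and $c\varepsilon^3\|\Psi\|_{C^3}$, while the gain from the failure of (C) is $|\Psi(0)|\ge (c/k^2)\varepsilon^2\|\Psi\|_{C^2}$. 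Both scale linearly in the \emph{same} constant $c$, so you cannot close the inequality by taking $c$ large; after your normalisation to $k=1$ the leading terms in fact cancel exactly. This is not a matter of ``barely closing'' --- it does not close at all with a single constant.

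The paper avoids this circularity by a small but decisive reorganisation. Rather than using condition (A) to bound the gradient, it first disposes of the case where the \emph{first hypothesis of Corollary~\ref{cor:pert}} holds; in the complementary case one has $|\nabla\Psi(0)|\le c_1\varepsilon\|\Psi\|_{C^2}$ with $c_1=4/\delta$, a constant fixed \emph{before} $c$ is chosen. Then, assuming the nodal set meets $B((1+c_0\delta)\varepsilon)$, Taylor and Helmholtz give $|\lambda_1+\lambda_2|<c_3\varepsilon^2\|\Psi\|_{C^2}$ with $c_3=c_3(k,\delta)$ again independent of $c$. Choosing $c>c_3$ forces (C) to hold, so the only remaining possibility is that (B) fails, and the quadratic case of Corollary~\ref{cor:pert} applies. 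In other words, the paper runs your third case in the contrapositive direction (nonempty nodal set $\Rightarrow$ (C) holds), with the crucial twist that the gradient bound feeding into Taylor has a constant decoupled from $c$.
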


\begin{proof}
Let $c_0 > 0$ denote the constant appearing in Corollary \ref{cor:pert} and fix $\delta > 0$ sufficiently small that the conclusion of that corollary is available.

We show that there exists a $c = c(k, \delta) > 0$ such that, if any of
\begin{align}
\label{eq:cond}
  |\nabla \Psi(0)| \ge  c \varepsilon  \|\Psi\|_{C^2}   \, , \ \max\{ |\lambda_1|, |\lambda_2|  \}  \ge  c \varepsilon \|\Psi\|_{C^3}      \quad \text{or} \quad   |\lambda_1 + \lambda_2|  \ge c \varepsilon^2 \|\Psi\|_{C^2(B(2)} , 
\end{align}
hold, then either $\mathcal{N}$ does not intersect $B( ( 1 + c_0\delta) \varepsilon)$, or at least one of the two conditions of Corollary~\ref{cor:pert} must be satisfied for this choice of $\delta$.  By the conclusion of Corollary \ref{cor:pert} this gives the result.

Suppose that the first statement in \eqref{eq:cond} holds. Then, the first condition of Corollary \ref{cor:pert} is satisfied immediately if we take $c > c_1 := 4/\delta$. So henceforth we may assume that
\[ |\nabla \Psi(0)| <  c_0 \varepsilon  \|\Psi\|_{C^2(B(2\varepsilon))} .\]
Now, either $\mathcal{N}$ does not intersect $B( \varepsilon(1 + c_0\delta))$, or by Taylor's theorem we deduce that, for $c_2 > 2 c_1$,
\[   |\Psi(0)| <  c_2 \varepsilon^2  \|\Psi\|_{C^2(B(2\varepsilon))} .  \]
Since $\Psi$ satisfies the Helmholtz equation \eqref{eq:helm}, this also implies that, for $c_3 > k c_2$, 
\[   |\lambda_1 + \lambda_2| <  c_3 \varepsilon^2  \|\Psi\|_{C^2(B(2\varepsilon))} .   \]
Hence, since $\varepsilon < 1$ and $\|\Psi\|_{C^2(B(2))} \le \|\Psi\|_{C^3(B(2\varepsilon))}$, it remains to consider the case that
\begin{align*}
   \max\{ |\lambda_1|, |\lambda_2|  \}  \ge  c \varepsilon \|\Psi\|_{C^3(B(2\varepsilon))}      \quad \text{and} \quad   |\lambda_1 + \lambda_2|  < c_2 \varepsilon^2 \|\Psi\|_{C^3(B(2\varepsilon))}  . 
\end{align*}
Choosing $c > c_2$, this implies that that $\lambda_1$ and $\lambda_2$ are non-zero and of opposite sign. Hence using the inequality
\[ \left| 1 - \frac{ |x|}{|y|} \right| \le \frac{|x+y|}{\max\{|x|, |y| \} - |x + y|} , \]
valid for non-zero $x$ and $y$ of opposite sign, the second condition of Corollary \ref{cor:pert} is then satisfied if~$c$ is chosen large enough.
\end{proof}

%%%%%%%%%%%%%%%%
\smallskip

\section{Proof of the main results}
\label{sec:proofs}

In this section we complete the proofs of the main results. The variables $\kappa, \ell, \mathcal{L}, D$ and~$\delta$ are fixed throughout this section. Note that $\kappa$ is assumed to satisfy the conditions in Assumption~\ref{assumpt:degen}, and the domain $D$ is always considered to be bounded, but it is not necessarily smooth unless explicitly stated. Throughout this section, whenever an edge $e \in \varepsilon \mathcal{E}$ is considered, $f_1, f_2 \in \varepsilon \mathcal{F}$ are defined to be the faces adjacent to $e$. 

\subsection{Control of the `bad' events}

We first show how to control each of the `bad' events introduced in Section \ref{sec:overview} using the results developed in Sections \ref{sec:gauss}--\ref{sec:pert}. As explained in that section, for the proofs of Theorems~\ref{thm:main1}, \ref{thm:main2} and \ref{thm:main3} we need only control the existence of these events. On the other hand, for the proof of Theorem~\ref{thm:main4} we shall also need to control their \textit{multiplicity}.

\subsubsection*{Double-crossings}

We control double-crossings using the Kac-Rice formula in Proposition \ref{prop:kr2}. For each $\varepsilon > 0$ and edge $e \in \varepsilon \mathcal{E}$, let $D(e)$ be the event that $e$ has a double-crossing (as defined in Section \ref{sec:overview}), and define \textit{multiplicity of the double-crossing} of $e$ to be 
\[ \tilde{D}(e) := \max \left\{ 0, |\{\mathcal{N}_\ell \cap e\}| - 1 \right\} . \]

\begin{proposition}[Control of double-crossings]
\label{prop:condc}
There exists a $c = c(\kappa, \ell, \mathcal{L})$ such that, for each $\varepsilon \in (0, 1)$ and $e \in \varepsilon \mathcal{E}$,
\[    \mathbb{E}[ \tilde{D}(e) ] <  c \varepsilon^3   ,\]
and in particular,
\[    \mathbb{P}( D(e)) <  c \varepsilon^3  .\]
\end{proposition}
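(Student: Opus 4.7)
The plan is to deduce Proposition \ref{prop:condc} directly from the second assertion of Proposition~\ref{prop:kr2} (the ``common line'' case), by taking the two line segments there to be the same edge $e$.

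First, I would fix $e \in \varepsilon\mathcal{E}$ and write $e = \varepsilon e_0$ for some $e_0 \in \mathcal{E}$. By periodicity of $\mathcal{L}$ there are only finitely many congruence classes of edges, so $|e_0|$ is bounded by a constant $m = m(\mathcal{L})$ (in fact $m \le d(\mathcal{L})$), and $e_0 \subseteq B(d)$ for some $d = d(\mathcal{L})$. Now apply Proposition~\ref{prop:kr2} with $L_1 = L_2 = e_0$; these segments lie on a common line, so the strengthened bound gives
\[
\mathbb{E}\bigl[N^\varepsilon\bigr] \;<\; c\, m^2\, \varepsilon^3
\]
for a constant $c = c(\kappa,\ell,\mathcal{L})$, where $N^\varepsilon$ counts ordered pairs of distinct points of $\mathcal{N}_\ell \cap e$.

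Next, I would relate $N^\varepsilon$ to the multiplicity $\tilde{D}(e)$. Setting $k := |\mathcal{N}_\ell \cap e|$, one has $N^\varepsilon = k(k-1)$ and $\tilde{D}(e) = \max\{0,k-1\}$, so $N^\varepsilon \ge 2\,\tilde{D}(e)$ deterministically (trivially when $k \le 1$, and since $k(k-1) \ge 2(k-1)$ when $k \ge 2$). Taking expectations gives
\[
\mathbb{E}\bigl[\tilde{D}(e)\bigr] \;\le\; \tfrac{1}{2}\mathbb{E}\bigl[N^\varepsilon\bigr] \;<\; \tfrac{c\,m^2}{2}\,\varepsilon^3,
\]
which is the first claim. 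The second claim follows by Markov's inequality: $\mathbb{P}(D(e)) = \mathbb{P}(\tilde{D}(e) \ge 1) \le \mathbb{E}[\tilde{D}(e)]$.

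There is essentially no obstacle, since the hard analytic work has already been absorbed into Proposition~\ref{prop:kr2}; the only point requiring care is using the sharper bound $c m^2 \varepsilon^3$ from the co-linear case rather than the generic bound $c m^2 \varepsilon$, which is what produces the correct exponent. The finitely-many-edge-types reduction via periodicity is what makes the constant $c$ uniform in $e$.
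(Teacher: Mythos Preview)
Your proposal is correct and follows essentially the same route as the paper: a direct application of the co-linear case of Proposition~\ref{prop:kr2} (with $L_1 = L_2$ equal to the edge) together with Markov's inequality. You have merely spelled out the bookkeeping (the inequality $N^\varepsilon = k(k-1) \ge 2\tilde D(e)$ and the periodicity/stationarity reduction making the constant uniform in $e$) that the paper leaves implicit.
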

\begin{proof}
This is a direct application of Proposition~\ref{prop:kr2} together with Markov's inequality.
\end{proof}

\subsubsection*{Four-crossings and tubular-crossings}
We control four-crossings and tubular-crossings by linking these to simpler crossing events which we treat with Kac-Rice formulae. In particular, we show that four-crossings and tubular-crossings necessarily imply either (i) non-linear crossings of three line segments, or (ii) crossings that are extremely close to lattice vertices or to each other; we control the former using Proposition~\ref{prop:kr3}, and the latter using Proposition~\ref{prop:kr2}. 

For each $\varepsilon > 0$ and edge $e \in \varepsilon \mathcal{E}$, let $T(e)$ be the event that $e$ has a tubular crossing (as defined in Section \ref{sec:overview}), and define the \textit{multiplicity of the tubular-crossing} of $e$, denoted $\tilde{T}(e)$, to be the number of components of $f_1 \cup f_2 \setminus \mathcal{N}_\ell$ that lie between the endpoints of $e$ (i.e.\ two less than the smallest number of components crossed by any curve joining the endpoints).

Moreover, for each $\varepsilon > 0$ and face $f \in \varepsilon \mathcal{F}$, let $F(f)$ be the event that $f$ has a four crossing (as defined in Section \ref{sec:overview}), and define the \textit{multiplicity of the four-crossing} to be 
\[  \tilde{F}(f) := \max \left\{ 0, \frac{|\{ e \in \partial f : |\{\mathcal{N}_\ell \cap e\}| \ge 1 \}| - 2}{2} \right\} .\]

\begin{lemma}[Four-crossings and tubular-crossings imply non-degenerate triple-crossings]
\label{lem:triple}
Suppose that $\mathcal{L}$ is strictly-convex, i.e.\ the interior angles of the faces $\mathcal{F}$ are strictly less than~$\pi$. Then, there exists an angle $\mu = \mu(\mathcal{L}) \in [0, \pi)$ and a constant $c = c(\mathcal{L}) > 0$ such that, for any $\delta \in (0, 1)$ and any $e \in \mathcal{E}$ and $f \in \mathcal{F}$:
\begin{enumerate}
\item If $F(f)$ holds, then there are edges $e_1, e_2, e_3 \in \partial f$ and points $s = (s_1, s_2, s_3)$ satisfying $s_i \in e_i \cap \mathcal{N}_\ell$ such that either:
\begin{enumerate}
\item $\theta^-(s) \ge \delta$ and $\theta^+(s) \le  \mu$; or
\item Two vertices of $f$ lie within a distance $c \delta$ of a point in $s$. 
\end{enumerate}
Moreover, if $\tilde{F}(f) = n \in \mathbb{N}$, there are at least $n$ distinct such sets of points $s$; and
\item If $T(e)$ holds, then there are edges $e_1, e_2, e_3 \in \partial (f_1 \cup f_2)$ and points $s = (s_1, s_2, s_3)$ satisfying $s_i \in e_i \cap \mathcal{N}_\ell$ such that either:
\begin{enumerate}
\item $\theta^-(s) \ge \delta$ and $\theta^+(s) \le  \mu$; or
\item Each vertex of $e$ lies within a distance $c \delta$ of a point in $s$; or
\item The edge $e$ contains two points in $s$ within a distance $c \delta$.
\end{enumerate}
Moreover, if $\tilde{T}(e) = n \in \mathbb{N}$, there are at least $n$ distinct such sets of points $s$.
\end{enumerate}
\end{lemma}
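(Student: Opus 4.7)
The proof rests on a compactness argument for the triangle geometry, plus separate combinatorial bookkeeping for each of the two events. The first step is to fix $\mu=\mu(\mathcal{L})\in[0,\pi)$: by strict convexity, a line crosses any face of $\mathcal{L}$ in at most two edges, so three points on three distinct edges of a single face (or of an adjacent pair $f_1\cup f_2$, which is also strictly convex at the vertices bordering the outside of the pair) are never collinear. Since the set of such triples (away from coincident vertices) is compact and $\theta^+$ is continuous and strictly $<\pi$ on it, $\theta^+$ attains a maximum $\mu<\pi$ depending only on $\mathcal{L}$. A parallel scaling argument yields a constant $c=c(\mathcal{L})$ such that whenever each point of a triple is at distance $\geq c\delta$ from every lattice vertex, the smallest angle satisfies $\theta^-\geq\delta$: the relevant side lengths of the triangle are $\gtrsim c\delta$, the diameter of the face is $O(1)$, and the law of sines then gives $\theta^-\gtrsim c\delta/\mathrm{diam}(f)$, which is $\geq\delta$ for $c$ large enough.

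\textbf{Part (1).} Let $p_i\in e_i\cap\mathcal{N}_\ell$, $1\leq i\leq k$, be a choice of one nodal point on each of the $k\geq 4$ crossed edges of $\partial f$, and let $V_{\mathrm{near}}\subseteq V(f)$ be the set of vertices within distance $c\delta$ of some $p_i$. If $|V_{\mathrm{near}}|\geq 2$, choose distinct $w_1,w_2\in V_{\mathrm{near}}$, pick $p_{i_1},p_{i_2}$ close to them on the corresponding incident edges, and extend to a triple with any third $p_{i_3}$: this triple satisfies (1b). Otherwise $|V_{\mathrm{near}}|\leq 1$, and since only the (at most two) edges incident to the possible vertex in $V_{\mathrm{near}}$ can host a near-vertex crossing, at least $k-2\geq 2$ of the $p_i$ lie at distance $\geq c\delta$ from every vertex. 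Picking a triple containing at most one near-vertex crossing places us in the compactness regime established above, so $\theta^-\geq\delta$ and $\theta^+\leq\mu$, giving (1a). For the multiplicity claim, in the case $|V_{\mathrm{near}}|\geq 2$ the number of triples which include two crossings near distinct vertices is at least $k-2\geq 2n$, and in the opposite case the number of admissible triples among the $\geq k-2$ ``generic'' crossings grows like $\binom{k-2}{3}$, dominating $n=(k-2)/2$.

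\textbf{Part (2).} The topological input is: since $f_1,f_2$ meet only along the internal edge $e$, a component $C$ of $(f_1\cup f_2)\setminus\mathcal{N}_\ell$ separating $v_1,v_2$ in $f_1\cup f_2$ must have $\partial C$ intersect both arcs of $\partial(f_1\cup f_2)\setminus\{v_1,v_2\}$, \emph{or} must traverse $e$ through an interval of $C\cap e$ bounded by two points of $\mathcal{N}_\ell\cap e$. In either case $\mathcal{N}_\ell\cap(\partial f_1\cup\partial f_2)$ has at least four points, arranged so that at least two lie on the ``left'' half (and two on the ``right'' half) of $\partial(f_1\cup f_2)\cup e$ relative to $e$. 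Applying the case analysis of Part (1) to a triple selected from these points yields (2a), \emph{unless} the only way to find three crossings forces a vertex degeneracy. In the latter case the distinguished structure of the tubular-crossing forces the degeneracy to be of one of two shapes: the tube wraps tightly around $e$, so that its exits lie within $c\delta$ of both $v_1$ and $v_2$, yielding (2b); or the tube pinches across $e$ with two crossings of $e$ separated by less than $c\delta$, yielding (2c). The multiplicity statement follows because each additional separating component between $v_1$ and $v_2$ contributes two new crossings on each side (or on $e$), so $\tilde T(e)=n$ furnishes at least $n$ distinct triples.

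\textbf{Principal obstacle.} The compactness input for $\mu$ and for the choice of $c$ is standard; the real work lies in Part (2), specifically in showing that the only degeneracies compatible with the topological constraint imposed by a separating component are exactly those captured by (2b) and (2c). This requires tracking how arcs of $\partial C$ attach to the boundary of $f_1\cup f_2$: an arc that attaches very close to $v_j$ contributes to (2b), an arc attaching to $e$ with close endpoints contributes to (2c), and any other near-vertex attachment can be avoided by choosing a different triple among the available crossings. Executing this dichotomy cleanly, and arguing that the count scales correctly with $\tilde T(e)$, is the most delicate part.
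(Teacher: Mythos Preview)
The paper's own proof is a single sentence --- ``This can be seen by inspection; see Figure~\ref{fig:triple}'' --- so you are supplying details the authors deliberately left to the reader.

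Your outline is sensible, but the compactness argument for a uniform $\mu=\mu(\mathcal L)<\pi$ does not go through as written. In the Plan you bound $\theta^+$ on the set of triples with each $s_i$ at distance $\ge c\delta$ from every vertex; that set depends on $\delta$, so the bound you extract is $\mu(c\delta)$, not a fixed $\mu(\mathcal L)$. In Part~(1) you then weaken the hypothesis further to ``at most one near-vertex crossing'' (which is already inconsistent with the compactness regime you set up, since one point of the triple may now be within $c\delta$ of a vertex), and here the argument genuinely fails. On the unit square put $s_1=(0,c\delta/2)$ on the left edge near $v=(0,0)$, $s_2=(1,c\delta+\varepsilon)$ on the right edge just outside the $c\delta$-ball around $w=(1,0)$, $s_3=(1/2,0)$ on the bottom edge, and $s_4=(1/2,1)$ on the top edge. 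Then $V_{\mathrm{near}}=\{v\}$, and the triple $(s_1,s_2,s_3)$ contains exactly one near-vertex point, yet as $\delta\to 0$ all three points collapse onto the line $y=0$ and $\theta^+\to\pi$. The same failure propagates to your $\theta^-\ge\delta$ claim, since the law-of-sines estimate you quote gives $\sin\theta^-\gtrsim (c\delta/\mathrm{diam})\sin\theta^+$, which is useless once $\theta^+\to\pi$.

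The lemma is still true because the fourth crossing rescues you: in the example, swapping $s_3$ for $s_4$ gives a uniformly non-degenerate triangle. But making this work in general requires choosing the three edges $e_1,e_2,e_3$ so that every collinear configuration they support would force two of the $s_i$ to vertices \emph{not} close to any crossing --- a finite combinatorial case check on which edges are crossed and which vertex (if any) lies in $V_{\mathrm{near}}$. That case check, and its analogue for the pair $f_1\cup f_2$ in Part~(2), is presumably what the authors' figure is meant to convey; a single compactness bound does not replace it.
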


\begin{proof}
This can be seen by inspection; see Figure \ref{fig:triple}.
\end{proof}

\begin{proposition}[Control of four-crossings and tubular-crossings]
\label{prop:contriple}
Suppose that $\mathcal{L}$ is strictly-convex and fix $\delta > 0$. Then there exists a $c = c(\kappa, \ell, \mathcal{L}, \delta)$ such that, for each $\varepsilon \in (0, 1)$ and $e \in \varepsilon \mathcal{E}$ and $f \in \varepsilon \mathcal{F}$,
\[    \mathbb{E}[ \tilde{T}(e) ]  <  c \varepsilon^{4 - \delta}  \quad \text{and} \quad  \mathbb{E}[ \tilde{F}(f) ]  < c \varepsilon^{4 - \delta}  ,\]
and in particular,
\[     \mathbb{P}( T(e) ) <  c \varepsilon^{4 - \delta}  \quad \text{and} \quad  \mathbb{P}( F(f) )  < c \varepsilon^{4 - \delta}  .\]
\end{proposition}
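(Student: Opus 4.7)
The strategy is to apply Lemma~\ref{lem:triple} with angle parameter $\eta = \varepsilon^{3/2}$, chosen so that the angle threshold in case~(a) of the lemma exactly matches the constraint in the three-point Kac--Rice estimate of Proposition~\ref{prop:kr3}. Under this choice the associated ``degeneracy distance'' becomes of order $\eta \cdot \varepsilon = \varepsilon^{5/2}$ in the scaled lattice $\varepsilon\mathcal{L}$, which is the length of the short segments near vertices or along a common edge appearing in cases~(b)--(c). Triples in case~(a) are controlled by Proposition~\ref{prop:kr3}, giving $O(\varepsilon^{4-\delta})$; configurations in cases~(b) and~(c) are controlled by Proposition~\ref{prop:kr2}, either in its general form ($c m^2 \varepsilon$) or its co-linear refinement ($cm^2\varepsilon^3$), applied to segments of scaled length $m = O(\varepsilon^{3/2})$. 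The probability bounds in the conclusion then follow from the expectation bounds by Markov.

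For $\mathbb{E}[\tilde F(f)]$ the analysis is particularly clean because the multiplicity is bounded deterministically, $\tilde F(f) \le c_{\mathcal{L}}$, so it is enough to bound $\mathbb{P}(F(f))$. By Lemma~\ref{lem:triple}(1), $F(f)$ forces a triple on three boundary edges of $f$ falling into case~(a) or case~(b); summing over the $O(1)$ admissible triples of edges, case~(a) contributes $O(\varepsilon^{4-\delta})$ via Proposition~\ref{prop:kr3}, while case~(b) (existence of crossings in two disjoint segments of length $O(\varepsilon^{5/2})$ near two distinct vertices of $f$) contributes $O(\varepsilon^{4})$ via Proposition~\ref{prop:kr2}.

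For $\mathbb{E}[\tilde T(e)]$ the multiplicity is not deterministically bounded, and I would instead invoke the ``$\ge n$ distinct triples'' clause of Lemma~\ref{lem:triple}(2), dominating $\tilde T(e)$ by the total number of triples on $\partial(f_1\cup f_2)$ falling into cases~(a), (b) or~(c). Case~(a) is Proposition~\ref{prop:kr3} as before. Case~(c) forces two crossings within $O(\varepsilon^{5/2})$ on the line supporting $e$, and is handled by covering $e$ with $O(\varepsilon^{-3/2})$ sub-segments of length $O(\varepsilon^{5/2})$ and summing the co-linear bound $cm^2\varepsilon^3 = O(\varepsilon^6)$ from Proposition~\ref{prop:kr2}, yielding a net $O(\varepsilon^{9/2}) = o(\varepsilon^{4-\delta})$.

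The main obstacle is case~(b) for tubular-crossings, since each such triple combines two crossings near the endpoints of $e$ with a \emph{third} free crossing on $\partial(f_1\cup f_2)$, and one must count triples rather than pairs. I would get around this by observing that the two near-endpoint crossings are automatically separated by a distance of order $\varepsilon$ (since the endpoints of $e$ are), so the triple is non-degenerate in that first leg, and any singular behaviour of the three-point Kac--Rice kernel can only arise from the third point approaching the line through the first two. Re-running the computation in the proof of Proposition~\ref{prop:kr3} with the integration domain restricted to two short segments of length $O(\varepsilon^{5/2})$ near the endpoints of $e$ and a single full edge, the integration volume is reduced to $O(\varepsilon^{6})$ (against the $O(\varepsilon^{3})$ of a generic triple), while the same $\varepsilon^{-\delta}$ trade-off as in Proposition~\ref{prop:kr3} absorbs the remaining $1/\theta^{-}$ singularity coming from the third point; the net contribution is of order $\varepsilon^{5-\delta}$, comfortably within the required $\varepsilon^{4-\delta}$.
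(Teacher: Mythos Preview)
Your strategy matches the paper's exactly: apply Lemma~\ref{lem:triple} with threshold $\varepsilon^{3/2}$, handle case~(a) by Proposition~\ref{prop:kr3}, and cases~(b)--(c) by Proposition~\ref{prop:kr2}, then use Markov. The paper even remarks that $\varepsilon^{3/2}$ is the largest threshold for which case~(b) still yields $O(\varepsilon^4)$, confirming your calibration.

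Where you diverge is case~(2b). The paper does not give it any special treatment: it simply cites Proposition~\ref{prop:kr2}, applied to a pair of segments of unscaled length $O(\varepsilon^{3/2})$ near the two endpoints of $e$, giving $c m^2 \varepsilon = O(\varepsilon^4)$. The ``third free crossing'' you worry about is a non-issue once you observe that the $n$ triples supplied by Lemma~\ref{lem:triple} arise from $n$ topologically distinct separating components of $(f_1\cup f_2)\setminus\mathcal{N}_\ell$, and can therefore be chosen with pairwise distinct constrained crossing-pairs; hence the number of case-(2b) triples is dominated by the number of \emph{pairs} of near-endpoint crossings, which is exactly what Proposition~\ref{prop:kr2} bounds. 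Your own treatment of case~(2c) already uses this pair-counting reduction implicitly (you bound close pairs on $e$, not triples), so singling out (2b) for a bespoke three-point Kac--Rice computation is both unnecessary and internally inconsistent with how you handle (2c). If you want full rigour, the cleaner fix is to note that Lemma~\ref{lem:triple} delivers disjoint (not merely distinct) triples, rather than to re-derive a restricted three-point estimate.
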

\begin{proof}
This can be proved by combining Lemma \ref{lem:triple} (setting $\delta := \varepsilon^{3/2}$) with the bounds in Proposition \ref{prop:kr3} (to control cases (1a) and (2a)) and Proposition \ref{prop:kr2} (to control cases (1b), (2b) and (2c)), and applying Markov's inequality. Notice that the choice of $\delta := \varepsilon^{3/2}$ is the maximum possible that will give the correct order $O(\varepsilon^4)$ in (1b) and (2b)). It is for this reason that we are able to impose the constraint $\theta^-(s) \ge \varepsilon^{3/2}$ in our statement of Proposition \ref{prop:kr3} (which eases the calculations), but no stronger constraint is possible. 
\end{proof}

\begin{figure}[ht]
\begin{center}
\includegraphics[scale=1]{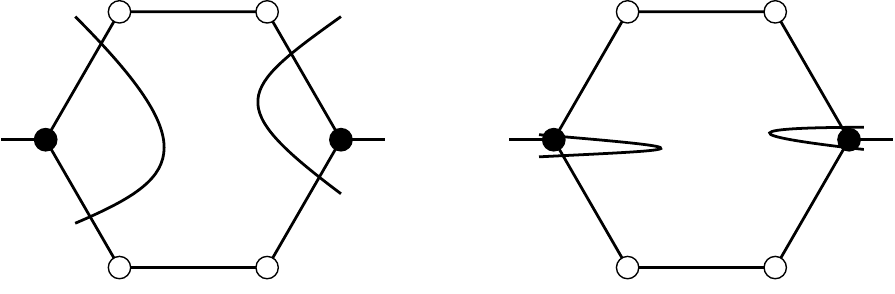}
\\
\includegraphics[scale=1]{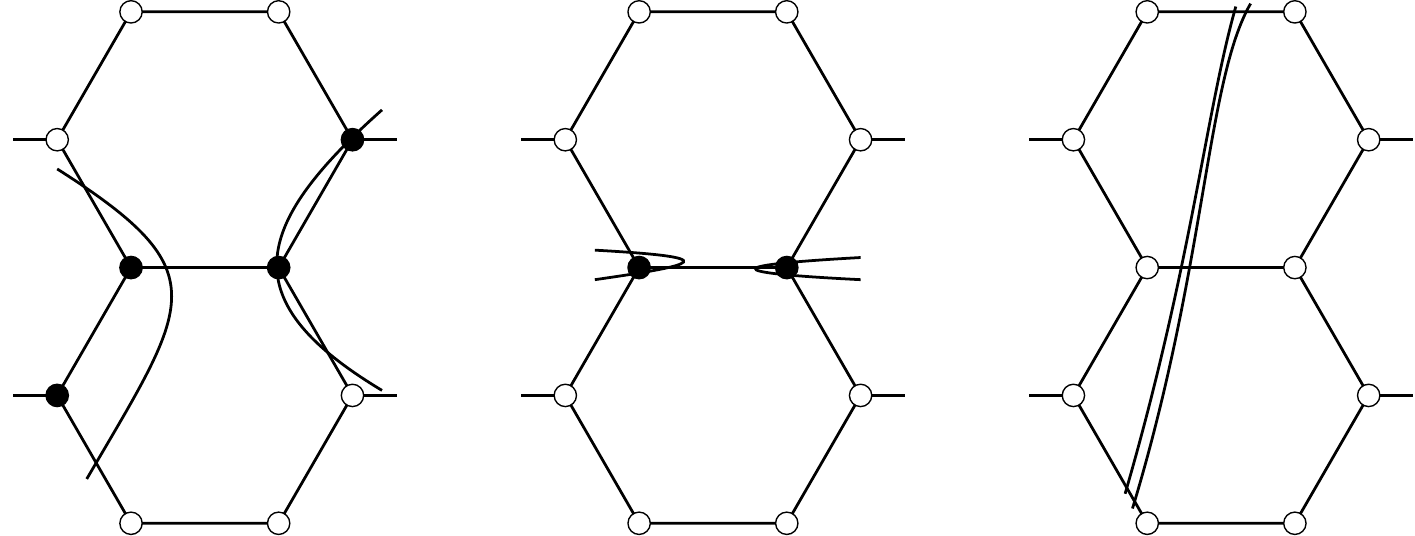}
\end{center}
\caption{Examples illustrating different case in Lemma \ref{lem:triple}, in particular, the two cases in part $(1)$ (top row) and the three cases in part $(2)$ (bottom row).}
\label{fig:triple}
\end{figure}

\subsubsection*{Small excursion domains}

We control small excursion domains by using the derivative bounds in Lemma \ref{lem:derbounds} and Proposition~\ref{prop:krmax}. For each $\varepsilon > 0$ and bounded domain $\bar D$, define a \textit{small excursion domain in} $\bar D$ to be a component of $\bar D \setminus \mathcal{N}_\ell $ whose boundary intersects at most one edge in $\varepsilon \mathcal{E}$ and does not intersect~$\partial \bar D$. For each $s > 0$, let $\tilde{S}(s, \varepsilon)$ to be the number of small excursion domains in~$sD$.

\begin{lemma}
\label{lem:snc}
For each $s > 0$ and $\varepsilon > 0$,
\[ \tilde{S}(s, \varepsilon) \le \left| \left\{ x \in sD : \nabla \Psi(x) = 0 , |\Psi(x) - \ell| <  2 d(\mathcal{L})^2 \varepsilon^2 \|\Psi\|_{C^2(B(x, 2\varepsilon d(\mathcal{L}))} \right\} \right| . \]
\end{lemma}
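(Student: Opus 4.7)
The plan is to produce, from each small excursion domain in $sD$, a distinct critical point of $\Psi$ lying in $sD$ at which the value of $\Psi$ is forced to be close to $\ell$. First I would observe that, since a small excursion domain $S$ has boundary contained in $\mathcal{N}_\ell$ (by definition, it does not intersect $\partial sD$), and since under Assumption~\ref{assumpt:degen} the components of $\mathcal{N}_\ell$ are almost surely simple closed curves (Lemma~\ref{lem:reg}(4)), $S$ is in fact a bounded component of $\mathbb{R}^2 \setminus \mathcal{N}_\ell$ contained in $sD$, whose outer boundary is a single simple closed curve $\gamma_0 \subseteq \mathcal{N}_\ell$.

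The key geometric observation is that $\gamma_0$ lies inside the union of at most two adjacent faces $f_1, f_2 \in \varepsilon \mathcal{F}$: a single face if $\partial S$ crosses no edge of $\varepsilon \mathcal{E}$, or the two faces sharing the unique crossed edge otherwise. Since $f_1 \cup f_2$ is a simply-connected closed topological disk, its complement in $\mathbb{R}^2$ is connected, so the Jordan interior of $\gamma_0$ is contained in $f_1 \cup f_2$; in particular $S$ itself lies in $f_1 \cup f_2$ and has diameter at most $2 \varepsilon d(\mathcal{L})$.

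Since $\Psi - \ell$ vanishes on $\partial S$ and has constant nonzero sign on $S$, it attains an interior extremum at some $x \in S$, at which $\nabla \Psi(x) = 0$. Then, choosing any $y \in \partial S$ so that $\Psi(y) = \ell$ and noting that $|y - x| \le 2 \varepsilon d(\mathcal{L})$, Taylor's theorem applied at $x$ with $\nabla \Psi(x) = 0$ gives, for some $z$ on the segment $[x,y] \subseteq B(x, 2\varepsilon d(\mathcal{L}))$,
\[ \ell - \Psi(x) = \Psi(y) - \Psi(x) = \tfrac{1}{2}(y - x)^T \nabla^2 \Psi(z)(y - x), \]
and hence
\[ |\Psi(x) - \ell| \le \tfrac{1}{2}(2 \varepsilon d(\mathcal{L}))^2 \|\Psi\|_{C^2(B(x, 2\varepsilon d(\mathcal{L})))} = 2 d(\mathcal{L})^2 \varepsilon^2 \|\Psi\|_{C^2(B(x, 2\varepsilon d(\mathcal{L})))}. \]
Since distinct small excursion domains are disjoint, the critical points $x$ so produced are distinct, giving an injection from the small excursion domains in $sD$ into the set on the right-hand side and therefore the claimed inequality. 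I expect no serious obstacle; the only mild subtlety is the confirmation that the Jordan interior of $\gamma_0$ lies in $f_1 \cup f_2$, which follows from the simple connectedness of this union.
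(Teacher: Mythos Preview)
Your proof is correct and follows essentially the same approach as the paper: locate an interior extremum $x$ in each small excursion domain, bound $|\Psi(x)-\ell|$ via Taylor's theorem using that a nearby point lies on $\mathcal{N}_\ell$, and use disjointness of the domains for injectivity. The only cosmetic difference is that the paper, rather than arguing $S\subseteq f_1\cup f_2$, lets $f$ be the face containing $x$ and notes that $\partial S\subseteq\mathcal{N}_\ell$ must meet the union $F$ of $f$ with its neighbours; both routes yield the same distance bound $|y-x|\le 2\varepsilon d(\mathcal{L})$.
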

\begin{proof}
Let $\mathcal{S} \subseteq sD$ be a small excursion domain in $sD$. Observe that there must be a local maxima or minima contained within $\mathcal{S}$; suppose that $x \in \mathcal{S}$ is such a point. Let $f  \in \varepsilon \mathcal{F}$ denote the face that contains $x$, and $F$ denote the union of $f$ with all its adjacent faces in~$\varepsilon \mathcal{F}$. By Taylor's theorem, each $y \in F$ satisfies
\[ | \Psi(y) - \Psi(x)| <  \frac{1}{2} (2d(\mathcal{L}))^2 \varepsilon^2 \|\Psi\|_{C^2(B(x, 2\varepsilon d(\mathcal{L}))} . \]
Since $\mathcal{S}$ is a small excursion domain, the level set $\mathcal{N}_\ell$ intersects $F$, and so we may select a $y \in F$ such that $\Psi(y) = \ell$. We conclude that $x \in \mathcal{S}$ satisfies
\[  | \Psi(x) - \ell |  <  2 d(\mathcal{L})^2 \varepsilon^2 \|\Psi\|_{C^2(B(x, 2 \varepsilon  d(\mathcal{L}))} , \]
which implies the result.
 \end{proof}

\begin{proposition}[Control of small excursion domains]
\label{prop:consed}
There exists a $c = c(\kappa, \ell, \mathcal{L}, D)$ such that, for each $s > 0$ and $\varepsilon > 0$,
\[    \mathbb{E}[ \tilde{S}(s, \varepsilon) ]  <  c \varepsilon^2 s^2,\]
and in particular,
\[     \mathbb{P}( \tilde{S}(s, \varepsilon) \ge 1) <  c \varepsilon^2 s^2 .\]
\end{proposition}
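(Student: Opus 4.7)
The approach is to combine Lemma \ref{lem:snc} with a Kac--Rice argument, the principal challenge being that the threshold $2 d(\mathcal{L})^2 \varepsilon^2 \|\Psi\|_{C^2(B(x, 2 \varepsilon d(\mathcal{L})))}$ appearing in the lemma is a nonlinear functional of the field itself, so Proposition \ref{prop:krmax} cannot be applied directly.

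I would first sharpen Lemma \ref{lem:snc} by noting that the relevant points are critical points of $\Psi$, so the Taylor expansion bounding $|\Psi(y) - \Psi(x)|$ in a small ball around such a critical point only uses the $C^2$-seminorm. This gives $\tilde{S}(s, \varepsilon) \le N(s, \varepsilon)$, where $N(s, \varepsilon)$ counts the critical points $x \in sD$ with $|\Psi(x) - \ell| < 2 d(\mathcal{L})^2 \varepsilon^2 M(x)$ and $M(x) := \sup_{B(x, 2\varepsilon d(\mathcal{L}))} |\nabla^2 \Psi|$. A version of the two-dimensional Kac--Rice formula that allows the test functional to depend on the field in a neighbourhood of the critical point (a routine extension of Proposition~\ref{prop:2dkr}), combined with stationarity, then yields
\[ \mathbb{E}[N(s, \varepsilon)] \le c_0 s^2 \, \mathbb{E}\bigl[ |\nabla^2 \Psi(0)|\,\mathbf{1}_{\{|\Psi(0) - \ell| < c_1 \varepsilon^2 M(0) \}} \mid \nabla \Psi(0) = 0\bigr]. \]

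By Taylor's theorem, $M(0) \le |\nabla^2 \Psi(0)| + 2\varepsilon d(\mathcal{L}) \, L$ with $L := \|\Psi\|_{C^3(B(0, 1))}$, which splits the event on the right into the two subcases $\{|\Psi(0) - \ell| < 2 c_1 \varepsilon^2 |\nabla^2 \Psi(0)|\}$ and $\{|\Psi(0) - \ell| < 4 c_1 d(\mathcal{L}) \varepsilon^3 L\}$. I would bound each subcase by iterated expectation, conditioning on the random quantity appearing inside the indicator (either $\nabla^2 \Psi(0)$, or the pair $(|\nabla^2 \Psi(0)|, L)$). The key input is that the conditional density of $\Psi(0)$ given these conditioning quantities is uniformly bounded: in the first subcase this is immediate, since $(\Psi(0), \nabla^2 \Psi(0))$ is jointly Gaussian and non-degenerate under Assumption~\ref{assumpt:degen}, while the second requires a disintegration against the Gaussian process representation of $\Psi$ on $B(0, 1)$. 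This yields $\mathbb{P}(|\Psi(0) - \ell| < \delta \mid \cdots) \le C \delta$ in each subcase, reducing the two contributions to $O(\varepsilon^2) \cdot \mathbb{E}[|\nabla^2 \Psi(0)|^2]$ and $O(\varepsilon^3) \cdot \mathbb{E}[|\nabla^2 \Psi(0)| \cdot L]$ respectively, both finite by Lemma~\ref{lem:derbounds} (with a final Cauchy--Schwarz in the latter case). Summing, $\mathbb{E}[\tilde{S}(s, \varepsilon)] \le c \varepsilon^2 s^2$, and the probability bound is immediate from Markov's inequality.

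The main obstacle is the uniform boundedness of the conditional density of $\Psi(0)$ in the second subcase, where the conditioning involves the nonlinear functional $L$; unlike the first subcase, the relevant joint distribution is not Gaussian, and a careful disintegration argument is needed. All remaining steps are either standard or routine extensions of tools already developed in Sections \ref{sec:gauss}--\ref{sec:kacrice}.
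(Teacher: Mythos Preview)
Your route is more elaborate than the paper's, which avoids the Taylor expansion and case split entirely. After invoking Lemma~\ref{lem:snc}, the paper simply covers $sD$ by $O(s^2)$ unit squares, uses stationarity to reduce to a single unit square $I$, and replaces each local threshold $\|\Psi\|_{C^2(B(x,2\varepsilon d(\mathcal{L})))}$ by the single global quantity $\hat d := \|\Psi\|_{C^2(B(1+2d(\mathcal{L})))}$. It then conditions on the $\sigma$-algebra generated by $\hat d$, applies Proposition~\ref{prop:krmax} with the (now fixed) threshold $c_1\varepsilon^2\hat d$, and takes expectations via Lemma~\ref{lem:derbounds}. No extended Kac--Rice formula, no split into subcases, no $C^3$ norm.

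Your instinct that the random threshold is the crux is correct, and the paper's one-line conditioning step is indeed glossing over exactly the difficulty you identify: once one conditions on $\hat d$, the field is no longer the stationary Gaussian field to which Proposition~\ref{prop:krmax} applies. But your workaround does not escape this. Your second subcase still requires conditioning on $L=\|\Psi\|_{C^3(B(0,1))}$, which is the same kind of nonlinear functional of the whole field, and you rightly flag the resulting conditional-density claim as the unresolved obstacle. So the Taylor expansion and case split buy nothing: they merely transport the difficulty from $\hat d$ to $L$, at the cost of a longer argument and an unnecessary appeal to $C^3$ regularity. If the goal is a rigorous version, it is cleaner to stay with the paper's direct route and patch the conditioning step itself (e.g.\ by truncating $\hat d$ at a level $T$ and using Gaussian concentration for the tail), rather than introducing a decomposition that reproduces the same gap in a different guise.
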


\begin{proof}
Abbreviate $c_1 = 2d(\mathcal{L})^2$, and note that there exists a $c_2 = c_2(\mathcal{L}, D)$ such that the number of unit squares $I := [0, 1]^2$ needed to cover $sD$ is at most $c_2 s^2$. Applying Lemma \ref{lem:snc}, there exists a $c_3 = c_3(\kappa, \ell, \mathcal{L}, D) > 0$ such that
\begin{align*}
   \mathbb{E}[ \tilde{S}(s, \varepsilon) ]  & \le  \mathbb{E} \left[    \left| \left\{ x \in sD : \nabla \Psi(x) = 0 , |\Psi(x) - \ell| < c_1 \varepsilon^2 \|\Psi\|_{C^2(B(x, 2 \varepsilon d(\mathcal{L}) ) )} \right\} \right|   \right]   \\
   & \le c_2  s^2  \mathbb{E} \left[    \left| \left\{ x \in I : \nabla \Psi(x) = 0 , |\Psi(x) - \ell| < c_1  \varepsilon^2 \|\Psi\|_{C^2(B(1 + 2 d(\mathcal{L}) ) )} \right\} \right|   \right]  \\
   & \le  c_3  \varepsilon^2 s^2   \, \mathbb{E} [ \|\Psi\|_{C^2(B(1 + 2d(\mathcal{L}) ))} ] . 
   \end{align*}
where in the final step we conditioned on the $\sigma$-algebra generated by $ \|\Psi\|_{C^2(1 + 2d(\mathcal{L}) )} $ and used Proposition \ref{prop:krmax}. Applying Lemma~\ref{lem:derbounds} together with Markov's inequality gives the result. 
\end{proof}

\subsubsection*{Invisible errors}

Finally, we control invisible errors by combining the deterministic analysis in Corollary~\ref{cor:pert2} with the bounds in Lemma \ref{lem:derbounds} and Proposition \ref{prop:smalldevest}. As for Section \ref{sec:pert}, this subsection treats only the special case of the random plane wave, and may be skipped by a reader not concerned with this case.

In this subsection we assume that $\kappa$ satisfies~\eqref{eq:rpw} for a frequency parameter $k > 0$, $\mathcal{L}$ is a regular hexagonal lattice, and the level is set at $\ell= 0$. Without loss of generality we may rescale $\mathcal{L}$ so that $d(\mathcal{L}) = \sqrt{13}/4 \approx 0.9$ (this ensures that a circle of unit radius exactly circumscribes two adjacent hexagons).

First we give a formal definition of the `invisible errors' that we consider.  Recall the signed regions $\mathcal{S}^+$ and $\mathcal{S}^-$ defined in \eqref{def:signedregions}. Recall also the definition of a Type $2$ (see in particular Figure~\ref{fig:err}). Let $\varepsilon > 0$ and consider an edge $e \in \varepsilon \mathcal{E}$. Let $\bar D$ be a domain such that $f_1 \cup f_2 \subseteq \bar D$. We say that $e$ \textit{gives rise to an invisible error in} $\bar D$ if the faces $f_1$ and $f_2$ do not display a Type~$2$ error pattern, and either: (i) there is a component of $\bar D \setminus \mathcal{N}$ that separates the endpoints of $e$ in $\bar D$; or (ii) there is a component of $(f_1 \cup f_2) \setminus \mathcal{N}$ that separates the endpoints of $e$ in $f_1 \cup f_2$, and moreover this component intersects $\partial \bar D$ and at least one vertex in $f_1 \cup f_2$. See Figure \ref{fig:ie} for examples of these two cases.

\begin{figure}[t]
\begin{center}
\includegraphics[scale=1]{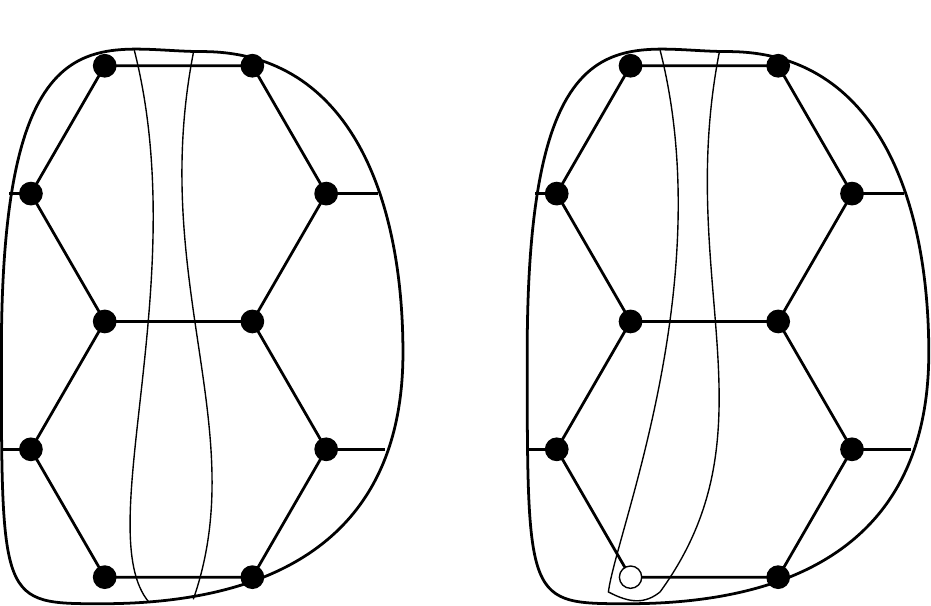}
\end{center}
\caption{Examples illustrating the two different cases in which an edge may give rise to an invisible error inside a domain.}
\label{fig:ie}
\end{figure}

Before discussing how to control invisible errors, we state a result that links them to the geometry of the regular hexagonal lattice, by way of the $\delta$-conic property; this is the only time the specific geometry of regular hexagons is needed (with one small caveat mentioned below). 

\begin{lemma}[Cones are `visible' on the vertices of adjacent hexagons] 
\label{lem:hex}
Let $H_1$ and $H_2$ be adjacent regular hexagons. Then there exists a sufficiently small $\delta > 0$ such that, for any cone~$\mathcal{C}$ with centre $x$ and angle at most $\delta$ away from $\pi/2$, if either:
\begin{enumerate}
 \item The endpoints of $e$ lie in opposite components of the exterior of $\mathcal{C}$; or
 \item One endpoint of $e$ lies in the exterior of $\mathcal{C}$ and the other is within a distance $\delta$ of $x$;
 \end{enumerate}
 then both $H_1$ and $H_2$ have vertices lying in the interior of $\mathcal{C}$.
\end{lemma}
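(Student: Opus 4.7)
The plan is a short geometric compactness argument. Normalize so that the shared edge $e$ of $H_1, H_2$ is the segment from $p_- = (-1/2, 0)$ to $p_+ = (1/2, 0)$, with $H_1$ below and $H_2$ above; the ``far'' vertices (those opposite to $e$ in each hexagon) are then $q_\pm^1 = (\pm 1/2, -\sqrt 3)$ for $H_1$ and $q_\pm^2 = (\pm 1/2, \sqrt 3)$ for $H_2$. A cone $\mathcal{C}$ is specified by its centre $x = (x_0, y_0)$, axis direction $v \in S^1$, and half-opening $\alpha$ with $|\alpha - \pi/4| < \delta/2$; its exterior consists of two open wedges $W_\pm$ of half-opening $\pi/2 - \alpha$ centred around $\pm v^\perp$.

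The first step is to confine $(x, v)$ from the hypothesis. The condition $p_+ \in W_+$ is equivalent to $(p_+ - x) \cdot v^\perp \ge |p_+ - x|\sin\alpha$, and $p_- \in W_-$ to $(p_- - x) \cdot v^\perp \le -|p_- - x|\sin\alpha$. Subtracting and using $p_+ - p_- = (1, 0)$ gives
\[
v^\perp \cdot (1, 0) \ge (|p_+ - x| + |p_- - x|)\sin\alpha ,
\]
which, since $|v^\perp| = 1$, forces $|p_+ - x| + |p_- - x| \le 1/\sin\alpha = \sqrt 2 + O(\delta)$. In the limiting configuration $\delta = 0$, $\alpha = \pi/4$, $v = (0,1)$, each wedge condition reduces to $|y_0| \le 1/2 \mp x_0$, so the feasible region for $x$ is exactly the diamond $\{|x_0|+|y_0| \le 1/2\}$; a short argument ruling out $v^\perp$ far from $(1,0)$ then pins the axis direction $v$ to within $O(\delta)$ of vertical. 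In case $(2)$, the bound $|x - p_\pm| \le \delta$ immediately confines $x$ near one endpoint, and the remaining wedge condition forces $v^\perp$ to within $O(\delta)$ of the direction $(\mp 1, 0)$.

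The second step is to verify the conclusion in the limit. For any $x = (x_0, y_0)$ in the diamond, the angle at $x$ between the downward axis $-v = (0, -1)$ and the far vertex $q^1_+$ is
\[
\arctan\!\bigl( (1/2 - x_0)/(\sqrt 3 + y_0) \bigr) \le \arctan(1/\sqrt 3) = \pi/6 ,
\]
with the bound attained at the corner $x = p_-$; an identical bound holds for $q^1_-$ (via the opposite corner) and, by reflection, for $q^2_\pm$. Since $\pi/6 < \pi/4 = \alpha$, every such vertex lies strictly inside $\mathcal{C}$, with uniform angular margin at least $\pi/12$. For case $(2)$, with $x = p_+$ and $v = (0,1)$, the vertex $(1/2, -\sqrt 3) \in H_1$ lies directly on the downward axis and $(1/2, \sqrt 3) \in H_2$ on the upward axis, both with maximal margin.

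Finally, the containment ``$w \in \mathrm{int}\,\mathcal{C}$'' is a strict open condition in $(x, v, \alpha)$ depending continuously on these parameters. Since it holds with a uniform positive margin on the closure of the limiting hypothesis set (which is compact in $\mathbb{R}^2 \times S^1 \times \mathbb{R}$), continuity yields a $\delta_0 > 0$ below which the conclusion persists throughout the hypothesis set. The main obstacle is the parameter-confinement estimate of the first step, particularly the coupled control of the deviation of $x$ from $e$ and of $v$ from vertical; once this is in place, the verification is a routine angular calculation exploiting the $\pi/4 - \pi/6 = \pi/12$ margin of the regular hexagon---exactly the margin that is absent for the square or triangular lattices, as illustrated in Figure~\ref{fig:hex}.
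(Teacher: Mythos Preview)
The paper disposes of this lemma in one line (``clear from inspection; see Figure~\ref{fig:hex2}''), so you are supplying far more detail than the paper does. Your compactness strategy is the right idea, but the parameter-confinement in Step~1 contains a genuine error that invalidates Step~2.

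Your claim that the axis direction $v$ is pinned to within $O(\delta)$ of vertical is false. From your own inequality
\[
v^\perp \cdot (1,0) \ge (|p_+ - x| + |p_- - x|)\sin\alpha,
\]
when $x$ lies on the edge $e$ (say $x = 0$) the right-hand side is only $\sin\alpha \approx 1/\sqrt 2$, so $v^\perp$ may lie anywhere within $\pi/4$ of $(1,0)$, not within $O(\delta)$. Concretely, take $\delta = 0$, $x = (0,0)$, $\alpha = \pi/4$, and $v$ at any angle $\theta \in (-\pi/4, \pi/4)$ from vertical: the endpoints $p_\pm$ lie in opposite exterior wedges, so the hypothesis is satisfied throughout this arc. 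The same issue arises in case~(2): the single wedge condition on $p_-$ only forces $|v^\perp \cdot (1,0)| \ge \sin\alpha$, again allowing $v$ a full $\pi/4$-range. The ``coupled control'' you allude to is real---moving $x$ off the edge does force $v$ toward vertical---but the coupling is a trade-off, not a uniform $O(\delta)$ bound on either parameter alone.

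This breaks Step~2, which verifies the conclusion only for vertical $v$. For instance, with $x = 0$ and $v$ near $\pi/4$ from vertical, your chosen far vertex $q^1_+ = (1/2,-\sqrt 3)$ lies at angle roughly $\arctan(2\sqrt 3) \approx 74^\circ$ from the negative $x$-axis, while the interior sector around $-v$ is near the third quadrant; $q^1_+$ is then outside the cone. The lemma still holds because $q^1_-$ (or a side vertex such as $(-1,-\sqrt 3/2)$) lies in the interior, but your argument does not establish this. A correct verification must cover the full two-parameter hypothesis set in $(x,v)$; once done, the compactness step goes through as you describe.
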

\begin{proof}
This is clear from inspection; see Figure \ref{fig:hex2}.
\end{proof}

\begin{figure}[ht]
\begin{center}
\includegraphics[scale=1]{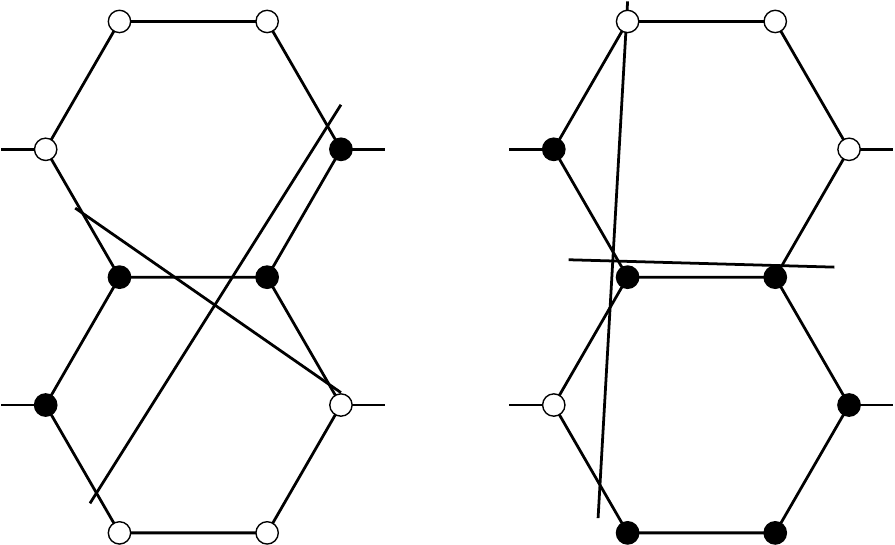}
\end{center}
\caption{Examples illustrating the two different cases in Lemma \ref{lem:hex}.}
\label{fig:hex2}
\end{figure}

\begin{corollary}[The $\delta$-conic property precludes invisible errors]
\label{cor:hex}
There exists a sufficiently small $\delta > 0$ such that, for each $\varepsilon >  0$ and each $e \in \varepsilon \mathcal{E}$, if $\mathcal{N}$ satisfies the $\delta \varepsilon$-conic property in a simply-connected domain $\bar D$ with $f_1 \cup f_2 \subseteq \bar D$, then $e$ does not give rise to an invisible error.
\end{corollary}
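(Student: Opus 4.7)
The plan is to argue by contradiction: suppose $\mathcal{N}$ satisfies the $\delta\varepsilon$-conic property in $\bar D$ with witness regular conic section $\mathcal{C}$, but $e$ nonetheless gives rise to an invisible error in $\bar D$. I would first dispose of the cases in which $\mathcal{C}$ is empty, a single line, or a single circle. In each of these, $\bar D \setminus \mathcal{C}$ has at most two connected components, and the homeomorphism clause of the $\delta\varepsilon$-conic property transfers this property to $\bar D \setminus \mathcal{N}$. Both conditions (i) and (ii) in the definition of invisible error require a component of $\bar D \setminus \mathcal{N}$ (respectively, of $(f_1 \cup f_2) \setminus \mathcal{N}$) that genuinely lies \emph{between} the two endpoints of $e$, i.e.\ that separates them without containing either of them. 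With only two components available and one endpoint of $e$ in each, no such third ``middle'' component can exist; in case (ii) the additional requirement that the separating component intersect $\partial \bar D$ and at least one vertex further rules out any residual configuration in which a circular or linear feature of $\mathcal{N}$ squeezes into $f_1 \cup f_2$.

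This leaves $\mathcal{C}$ as a (possibly degenerate) right-angled hyperbola, whose complement in $\bar D$ contains a natural middle region bounded by two branches meeting at near-right angles at the turning point (or, in the degenerate case of two perpendicular lines, by four quadrants meeting at the intersection point). Near this turning/intersection point, $\mathcal{C}$ is modelled by a right-angled cone $\mathcal{K}$ centred there. Crucially, the $\delta\varepsilon$-conic property confines $\mathcal{N}$ to the $\delta\varepsilon$-thickening of $\mathcal{C}$, so every vertex of $f_1 \cup f_2$ lying further than $\delta\varepsilon$ from $\mathcal{C}$ belongs unambiguously to a specific component of $\bar D \setminus \mathcal{C}$ and thereby inherits a specific sign in the percolation process $\mathcal{P}^\varepsilon$. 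Translating the separation condition (i) or (ii) into this geometric picture, the endpoints of $e$ satisfy either hypothesis (1) of Lemma \ref{lem:hex} (they lie in opposite components of the exterior of $\mathcal{K}$) or hypothesis (2) (one lies in the exterior of $\mathcal{K}$ while the other lies within $O(\delta\varepsilon)$ of its centre).

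Lemma \ref{lem:hex} then forces both $f_1$ and $f_2$ to contain a vertex in the interior of $\mathcal{K}$, i.e.\ in the middle component of $\bar D \setminus \mathcal{C}$. These vertices carry the sign opposite to that of the endpoints of $e$ in $\mathcal{P}^\varepsilon$, whereas the separation configuration forces the two endpoints of $e$ to share a common sign. This is precisely the Type $2$ error pattern on $f_1, f_2$, contradicting the hypothesis that no such pattern is present.

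The main obstacle is bridging the topological character of the $\delta\varepsilon$-conic property — whose homeomorphism is free to displace interior points arbitrarily — with the purely geometric hypotheses of Lemma \ref{lem:hex}, which refer directly to the positions of the endpoints of $e$ and the lattice vertices relative to a fixed cone in $\bar D$. The containment $\mathcal{N} \subseteq \mathcal{C}^{\delta\varepsilon}$ is what makes this bridge possible: it allows one to read off the sign of each vertex directly from its geometric position with respect to $\mathcal{C}$, provided $\delta$ is small enough compared with the fixed spacing of the lattice. A secondary difficulty is performing the case analysis in the first step carefully enough that no degenerate sub-configuration (such as a circle nearly tangent to $\partial \bar D$, or a hyperbola whose turning point lies just outside $\bar D$) slips through cases (i) or (ii) unnoticed.
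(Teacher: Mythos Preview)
Your proposal is correct and takes essentially the same approach as the paper: a case analysis on the type of regular conic section, with the empty/line/circle cases disposed of by component-counting and the (possibly degenerate) right-angled hyperbola case handled via Lemma~\ref{lem:hex}. The paper's own proof is extremely terse --- it simply says the result ``can be deduced from Lemma~\ref{lem:hex}'', points to a figure, and singles out the pair of perpendicular lines as the most delicate case --- so your write-up is in fact a faithful expansion of what the authors had in mind.
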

\begin{proof}
This can be deduced from Lemma \ref{lem:hex}; see Figure \ref{fig:hex3}. The most delicate case is when the regular conic section is a pair of lines intersecting at right-angles, in which case Lemma \ref{lem:hex} applies directly. Note that we may assume without loss of generality that $\varepsilon = 1$.
\end{proof}

\begin{figure}[ht]
\begin{center}
\includegraphics[scale=1]{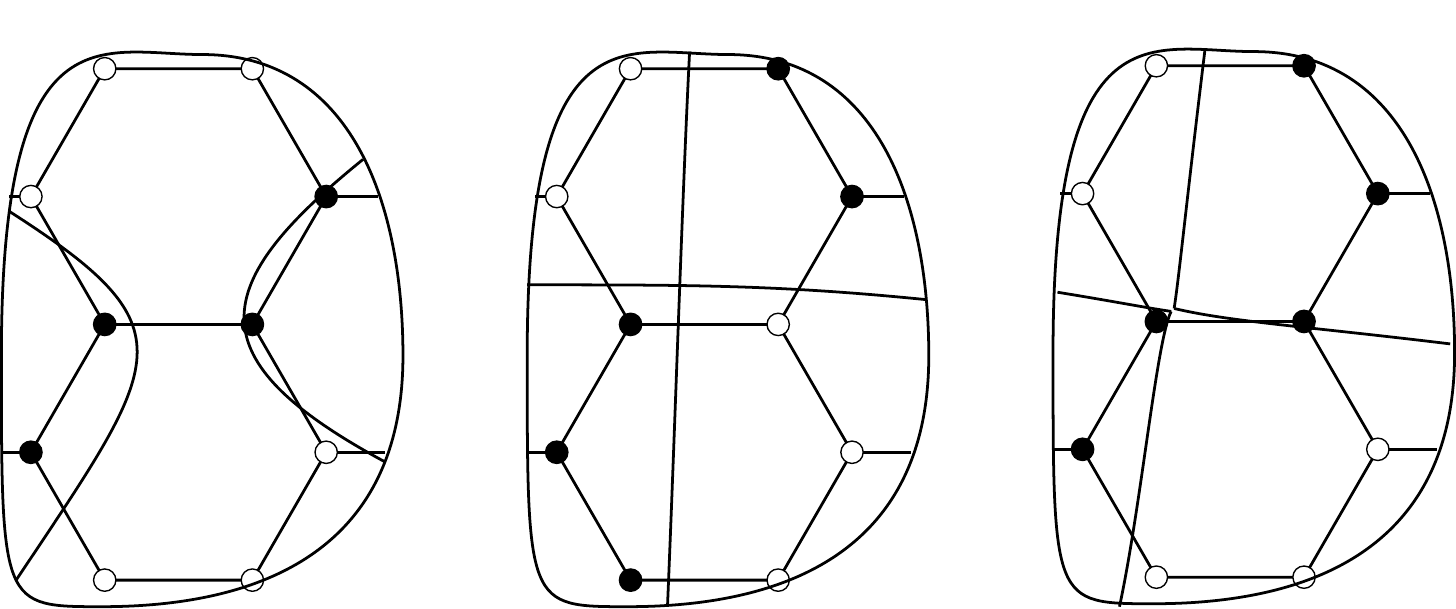}
\end{center}
\caption{Examples illustrating different cases in Corollary \ref{cor:hex}; the most delicate case is when the regular conic section is a pair of lines intersecting at right-angles (the two panels on the right).}
\label{fig:hex3}
\end{figure}

For each $\varepsilon > 0$ and edge $e \in \mathcal{E}$, let $m_e$ denote the mid-point of $e$. Let $a > 1$ be a constant such that (i) the ball $B(m_e, a \varepsilon)$ does not contain any other vertices in $\mathcal{V} \setminus (f_1 \cup f_2)$; and (ii) $2a < 3 d(\mathcal{L})$. It is easy to check that the first condition is possible (note that this uses a specific property of the regular hexagonal lattice, but one that is also shared by the square and regular triangular lattices), and the second condition is the origin of the bound $3 d(\mathcal{L})$ in the definition of $\varepsilon$\textit{-homeomorphic} in Section \ref{sec:intro}). Let $I(e)$ be the event that $e$ gives rise to an invisible error in every domain $\bar D$ such that $B(\varepsilon) \subseteq \bar D \subseteq B(a \varepsilon)$. 

\begin{proposition}
\label{prop:conie}
There exists a $c = c(k)$ such that, for each $\varepsilon > 0$ and $e \in \varepsilon \mathcal{E}$,
\[     \mathbb{P} ( I(e)  ) <  c \varepsilon^6   .\]
\end{proposition}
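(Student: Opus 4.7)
The plan is to show that the event $I(e)$ forces the three small-ball conditions of Corollary~\ref{cor:pert2} to hold simultaneously at $m_e$, and then to bound this joint probability using Proposition~\ref{prop:smalldevest} together with moment bounds for the local $C^k$-norm of $\Psi$.

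By stationarity we may assume $m_e = 0$. Let $c_0$ denote the constant supplied by Corollary~\ref{cor:pert2}. I would choose $\delta > 0$ small enough that $(1 + c_0 \delta) \le a$ and that $c_0 \delta$ is below the threshold demanded by Corollary~\ref{cor:hex}. Suppose that none of the three conditions in Corollary~\ref{cor:pert2} holds; then that corollary produces a simply-connected domain $D$ with $B(\varepsilon) \subseteq D \subseteq B((1+c_0 \delta)\varepsilon) \subseteq B(a\varepsilon)$ in which $\mathcal{N}$ satisfies the $c_0 \delta \varepsilon$-conic property. The normalisation $d(\mathcal{L}) = \sqrt{13}/4$ ensures $f_1 \cup f_2 \subseteq B(\varepsilon) \subseteq D$, so Corollary~\ref{cor:hex} applies and forbids an invisible error in $D$. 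This contradicts $I(e)$, which by definition requires an invisible error in \emph{every} admissible $\bar D$ with $B(\varepsilon) \subseteq \bar D \subseteq B(a\varepsilon)$. Hence $I(e) \subseteq \mathcal{C}$, where $\mathcal{C}$ denotes the event that all three conditions of Corollary~\ref{cor:pert2} hold at the origin.

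To bound $\mathbb{P}(\mathcal{C})$, let $M := \|\Psi\|_{C^3(B(2\varepsilon))}$, which dominates $\|\Psi\|_{C^2(B(2\varepsilon))}$. For any threshold $A > 0$, on the event $\{M \le A\}$ the three conditions defining $\mathcal{C}$ imply the deterministic inequalities
\[
|\nabla \Psi(0)| < c_1 A \varepsilon, \quad \max\{|\lambda_1|,|\lambda_2|\} < c_1 A \varepsilon, \quad |\lambda_1 + \lambda_2| < c_1 A \varepsilon^2,
\]
and Proposition~\ref{prop:smalldevest} then gives $\mathbb{P}(\mathcal{C},\, M \le A) \le c_2 A^5 \varepsilon^6$. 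For the complementary event, Gaussian concentration for the supremum of a smooth Gaussian field (Borell--TIS applied to $\Psi$ and its derivatives, which strengthens Lemma~\ref{lem:derbounds} to give $\mathbb{E}[M^N] < \infty$ for every $N$) yields $\mathbb{P}(M > A) \le c_N A^{-N}$. Taking $A$ to be a sufficiently large absolute constant then combines the two bounds into $\mathbb{P}(I(e)) \le \mathbb{P}(\mathcal{C}) \le c \varepsilon^6$, as required.

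The principal technical subtlety is that the thresholds in Proposition~\ref{prop:smalldevest} appear here as random functionals, namely the local $C^2$ and $C^3$ norms of $\Psi$, rather than deterministic constants; this is resolved by the cutoff argument above, whose only cost is a constant factor coming from moments of the Gaussian norm. The complementary structural point is checking that the domain $D$ produced by Corollary~\ref{cor:pert2} actually lies inside the admissible window $[B(\varepsilon), B(a\varepsilon)]$ and contains $f_1 \cup f_2$, which is exactly why $\delta$ must be chosen small and why the particular normalisation $d(\mathcal{L}) = \sqrt{13}/4$ is imposed.
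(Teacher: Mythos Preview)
Your overall strategy matches the paper's exactly: use Corollaries~\ref{cor:pert2} and~\ref{cor:hex} to show that $I(e)$ is contained in the event $\mathcal{C}$ that all three small-ball conditions hold at $m_e$, and then bound $\mathbb{P}(\mathcal{C})$ via Proposition~\ref{prop:smalldevest}. The structural verification that the domain produced by Corollary~\ref{cor:pert2} sits in the admissible window and contains $f_1\cup f_2$ is also handled the same way.

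There is, however, a genuine gap in your final step. From the cutoff you obtain
\[
\mathbb{P}(\mathcal{C}) \;\le\; c_2 A^5 \varepsilon^6 \;+\; \mathbb{P}(M > A),
\]
and then assert that taking $A$ to be a sufficiently large \emph{absolute constant} yields $\mathbb{P}(\mathcal{C}) \le c\varepsilon^6$. This is incorrect: for any fixed $A$ the tail term $\mathbb{P}(M>A)$ is a strictly positive number independent of $\varepsilon$, so the resulting bound is $O(\varepsilon^6) + \text{const}$, not $O(\varepsilon^6)$. A single fixed cutoff cannot work here; the threshold must depend on $\varepsilon$ (or one must integrate over the value of $M$). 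The paper's remedy is to avoid a single cutoff altogether: it conditions on the $\sigma$-algebra generated by $\hat d := \|\Psi\|_{C^3(B(2))}$, applies Proposition~\ref{prop:smalldevest} with the (now fixed) thresholds $c_1\hat d\,\varepsilon$, $c_1\hat d\,\varepsilon$, $c_1\hat d\,\varepsilon^2$ to get a conditional bound of order $\hat d^{5}\varepsilon^6$, and then takes expectation, invoking finiteness of moments of $\hat d$ (which follows from Borell--TIS and is the substance behind Lemma~\ref{lem:derbounds}). Your cutoff argument can be repaired in the same spirit by summing over dyadic shells $\{M\in[2^n,2^{n+1})\}$ rather than truncating once; that is the discrete analogue of the paper's conditioning and yields $\mathbb{P}(\mathcal{C})\le c\,\varepsilon^6\sum_n 2^{5n}\,\mathbb{P}(M\in[2^n,2^{n+1}))\le c'\varepsilon^6\,\mathbb{E}[M^5]$.
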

\begin{proof}
Assume without loss of generality that $m_e$ is the origin.  Let $c_1 = c_1(k) > 0$ be a constant satisfying Corollary~\ref{cor:pert2}, and select $\delta > 0$ sufficiently small such that, if $\mathcal{N}$ satisfies the $c_1 \delta$-conic property in a simply-connected domain $\bar D$ satisfying $f_1 \cup f_2 \subseteq \bar D$, then $e$ does not give rise to an invisible error. This is possible by Corollary~\ref{cor:hex}. Also choose $\delta$ small enough to ensure that $1 + c_1 \delta < a$, where $a > 1$ is the constant used in the definition of the event $I(e)$.

Abbreviate $\hat d = \| \Psi \|_{C^3(B(2))}$. By the conclusion of Corollaries~\ref{cor:pert2} and \ref{cor:hex},
\begin{align*}
 \mathbb{P} ( I(e) )  \le   \mathbb{P} \left(  \|\nabla \Psi(0)| < c_1 \varepsilon \hat d  , \,   \max\{ |\lambda_1|, |\lambda_2| \} < c_1 \varepsilon \hat d  , |\lambda_1 + \lambda_2| < c_1 \varepsilon^2 \hat d\right) <  c_2 \varepsilon^6  \hat{d}^4,
\end{align*}
for some $c_2 = c_2(k) > 0$, where $\lambda_1$ and $\lambda_2$ denote the two eigenvalues of the Hessian $\nabla^2 \Psi(0)$,  and in the last step we conditioned on the $\sigma$-algebra generated by $\hat{d}$ and applied Proposition~\ref{prop:smalldevest}. Applying Lemma~\ref{lem:derbounds} and Markov's inequality gives the result. 
\end{proof}

\subsection{Proof of the main results}

We complete the proof of the main results by combining our control on the `bad' events in Propositions \ref{prop:condc}, \ref{prop:contriple}, \ref{prop:consed} and \ref{prop:conie} with simple topological arguments.

\subsubsection*{Proof of Theorem \ref{thm:main1}}
Observe first that we may assume that $\mathcal{L}$ is strictly-convex, since if it is not we may simply apply the result to a strictly-convex graph $\mathcal{L}^\ast$ formed by adding additional edges to $\mathcal{L}$, and deduce the result. 

Note that the number of edges in $sD \cap \varepsilon \mathcal{E}$ is of order $ \varepsilon^{-2} s^2$. Hence, by Propositions~\ref{prop:condc}, \ref{prop:contriple} and~\ref{prop:consed} and the union bound, there exists a $c = c(\kappa, \ell, \mathcal{L}, D)$ such that, for each $\varepsilon \in (0, 1)$ and $s > 0$, there is an event $\mathcal{X}$ of probability $1 - c s^2 \varepsilon$ on which: (i) no edge in $sD \cap \varepsilon \mathcal{E}$ has a double-crossing; (ii) no face in $sD \cap \varepsilon \mathcal{F}$ has a four-crossing; and (iii) there are no small excursion domains inside~$sD$. We henceforth assume that the event $\mathcal{X}$ holds.
 
Consider a face $f \in  P^\varepsilon(s D) \cap \varepsilon \mathcal{F}$. Since we are working on $\mathcal{X}$, none of the boundary edges $e \in \partial f$ has a double-crossing, nor does $f$ have a four-crossing or a small excursion domain. Hence the level set $\mathcal{N}_\ell |_f$ is either empty or consists of exactly one curve which joins distinct edges of $f$. In either case, the level sets $\mathcal{N}_\ell^\varepsilon$ and $\mathcal{N}_\ell$ are $\varepsilon$-homeomorphic in $f$, as required.

Remark that, by this argument, we extract a bound of $c \varepsilon s^2$ on the rate of convergence in the statement of Theorem \ref{thm:main1}, for some $c = c(\kappa, \ell, \mathcal{L}, D)$.

\subsubsection*{Proof of Theorem \ref{thm:main2}}
As in the proof of Theorem \ref{thm:main1}, we may assume that $\mathcal{L}$ is strictly-convex. Note that, although the number of edges in $sD \cap \varepsilon \mathcal{E}$ is of order $\varepsilon^{-2} s^2$, since the domain $D$ is smooth the number of boundary edges in $\partial P^\varepsilon(s D)$ is only of order $\varepsilon^{-1} s$. Hence by Propositions~\ref{prop:condc}, \ref{prop:contriple} and~\ref{prop:consed} and the union bound, there exists a $c = c(\kappa, \ell, \mathcal{L}, D, \delta)$ such that, for each $\varepsilon \in (0, 1)$ and $s > 0$, there is an event $\mathcal{X}$ of probability $1 -  \varepsilon^{2 - \delta} s^{2}$ on which: (i) no boundary edge in $\partial P^\varepsilon(sD) \cap \varepsilon \mathcal{E}$ has a double-crossing; (ii) no face in $P^\varepsilon(sD) \cap \varepsilon \mathcal{F}$ has either a tubular-crossing or a four-crossing; and (iii) there are no small excursion domains inside $P^\varepsilon(s D)$. We henceforth assume that the event $\mathcal{X}$ holds.

As in the proof of Theorem \ref{thm:main1}, if a face $f \in P^\varepsilon(sD) \cap \varepsilon \mathcal{F}$ does not have a boundary edge $e \in \partial f$ with a double-crossing, then the level sets $\mathcal{N}_\ell^\varepsilon$ and $\mathcal{N}_\ell$ are $\varepsilon$-homeomorphic in $f$. So consider an edge $e \in P^\varepsilon(sD) \cap \varepsilon \mathcal{E}$ that has a double-crossing, and let $f_1, f_2 \in \varepsilon \mathcal{F}$ denote the adjacent faces. 

Since we are working on $\mathcal{X}$, any such edge is: (i) in the interior of $P^\varepsilon(sD)$; and (ii) does not have a tubular-crossing. This implies the existence of a pair of adjacent points in $\mathcal{N}_\ell \cap e$ that are connected by a single level line $c  \subseteq \mathcal{N}_\ell \cap ( f_1 \cup f_2 ) \subseteq P^\varepsilon(sD)$. Since there are no small excursion domains, there therefore exists a homeomorphism $h: f_1 \cup f_2 \to f_1 \cup f_2$ mapping the level line $c$ onto a second curve $\hat{c}$ such that (i) $c$ and $\hat{c}$ have the same endpoints on $\partial (f_1 \cup f_2)$, and (ii) $\hat{c}$ crosses the edge $e$ two fewer times than $c$. Moreover, this homeomorphism is local, in the sense that it moves points at most $2 d(\mathcal{L}) \varepsilon$. 

Sequentially applying such homeomorphisms to eliminate all double-crossing from $e$ (see Figure~\ref{fig:proof}), and then repeating the procedure for each $e \in P^\varepsilon(sD) \cap \varepsilon \mathcal{F}$ that has a double-crossing, we see that the the level sets $\mathcal{N}_\ell^\varepsilon$ and $\mathcal{N}_\ell$ are $\varepsilon$-homeomorphic in $sD$, as required.

Remark that, by this argument, we extract a bound of $c  \varepsilon^{2 - \delta} s^{2} $ on the rate of convergence in the statement of Theorem \ref{thm:main2}, for some $c = c(\kappa, \ell, \mathcal{L}, D, \delta)$.

\begin{figure}[ht]
\begin{center}
\includegraphics[scale=1]{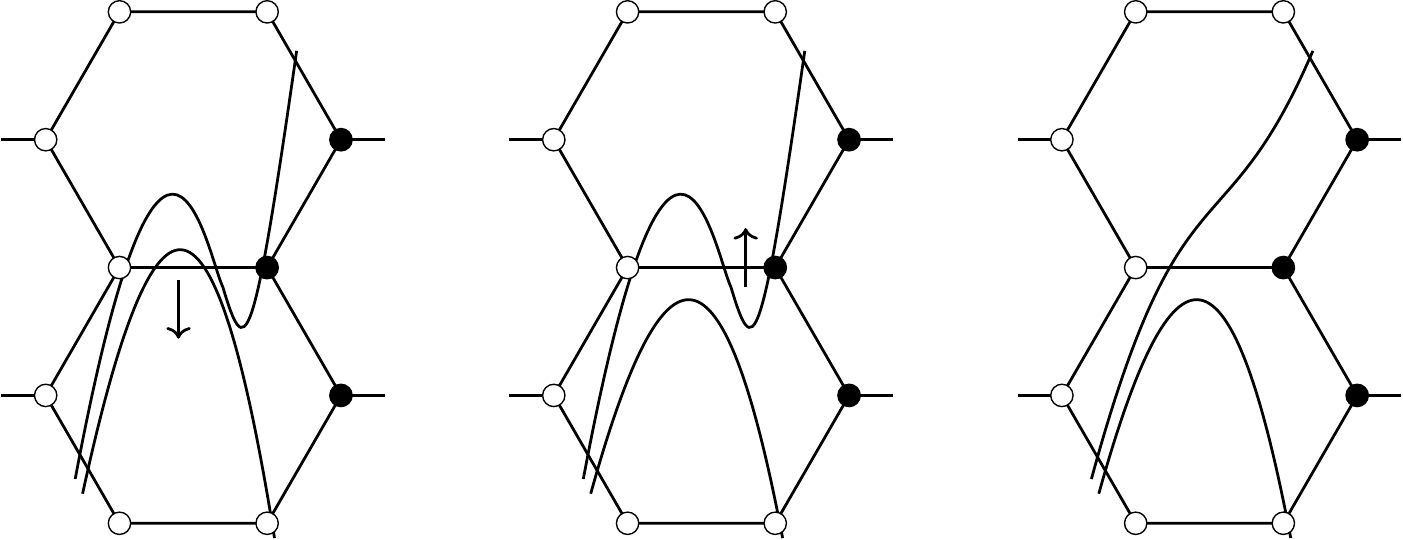}
\end{center}
\caption{Example of a sequence of local homeomorphisms to eliminate all double-crossings from an edge that does not have a tubular-crossing, as in the proof of Theorem \ref{thm:main2}.}
\label{fig:proof}
\end{figure}

\subsubsection*{Proof of Theorem \ref{thm:main3}}
Without loss of generality we may rescale $\mathcal{L}$ so that $d(\mathcal{L}) = \sqrt{13}/4$ (since $\mathcal{L}$ is regular, this is equivalent to rescaling the mesh-size $\varepsilon > 0$).
 
Assume first that $\varepsilon > 0$ is sufficiently small that nodal sets intersect at least two adjacent faces (i.e.\ there are no small nodal sets). Similarly to in the proofs of Theorem \ref{thm:main1} and \ref{thm:main2}, by Propositions~\ref{prop:condc} and \ref{prop:conie} and the union bound, there exists a $c = c(k, D)$ such that, for each $s > 0$ and $\varepsilon > 0$, there is an event $\mathcal{X}$ of probability $1 - c  \varepsilon^2 s$ on which: (i) no boundary edge in $\partial P^\varepsilon(sD) \cap \varepsilon \mathcal{E}$ has a double-crossing; (ii) no  edge $e \in sD \cap \varepsilon \mathcal{E}$ satisfies the invisible error event $I(e)$. Note that the error of order $\varepsilon^2 s$ arise from the control on double-crossings on the boundary; the control on invisible errors gives a term of order $\varepsilon^4 s^2$. Henceforth assume that the event $\mathcal{X}$ holds.

As in the proof of Theorem \ref{thm:main1}, the level sets $\mathcal{N}_\ell^\varepsilon$ and $\mathcal{N}_\ell$ are $\varepsilon$-homeomorphic in $f$ unless a face $f \in P^\varepsilon(sD) \cap \varepsilon \mathcal{F}$ either does not have a four-crossing, or does not have an edge $e \in \partial f$ with a double-crossing. We consider each of these cases in turn.

Consider an edge $e \in P^\varepsilon(sD) \cap \varepsilon \mathcal{E}$ that has a double-crossing, and let $f_1, f_2 \in \varepsilon \mathcal{F}$ denote the adjacent faces. Since we are working on $\mathcal{X}$, any such edge is (i) in the interior of $P^\varepsilon(sD)$, and (ii) does not satisfy the invisible error event $I(e)$. Using the definition of the event $I(e)$, and since there are no nodal domains, there are now two possibilities. Either $f_1$ and $f_2$ display a Type $2$ error, in which case they are contained in a component of the set of visible ambiguities $\pi^\varepsilon$. Or, there must exist a domain $\bar D \subseteq P^\varepsilon(sD)$ satisfying $2 d(\bar D) < 3 \varepsilon$ in which a homeomorphism can be constructed that eliminates all double-crossing of $e$ (in the same sense as in the proof of Theorem \ref{thm:main2}) and that fixes the vertices in $f_1 \cup f_2$ (for this, the second condition in the definition of `invisible errors' is crucial, as well as the fact that $\bar D$ does not contain any other vertices in $\mathcal{V}$ by construction). Note that by the bound on $d(\bar D)$, this homeomorphism moves points at most $3  \varepsilon$. See Figure \ref{fig:proof2} for an illustration.

Consider next a face $f \in P^\varepsilon(s D) \cap \varepsilon \mathcal{F}$ with a four-crossing, but whose boundary edges do not have double-crossings. Then $f$ must display a Type $1$ error pattern, and hence is also contained in a component of $\pi^\varepsilon$.

Combining the above, we have shown that the the level sets $\mathcal{N}_\ell^\varepsilon$ and $\mathcal{N}_\ell$ are $\varepsilon$-homeomorphic up to resolutions of the set of visible ambiguities, as required. Remark that, by this argument, we extract a bound of $c  \varepsilon^2 s $ on the rate of convergence in the statement of Theorem~\ref{thm:main3}, for some $c = c(k, \mathcal{L}, D)$.

\begin{figure}[ht]
\begin{center}
\includegraphics[scale=1]{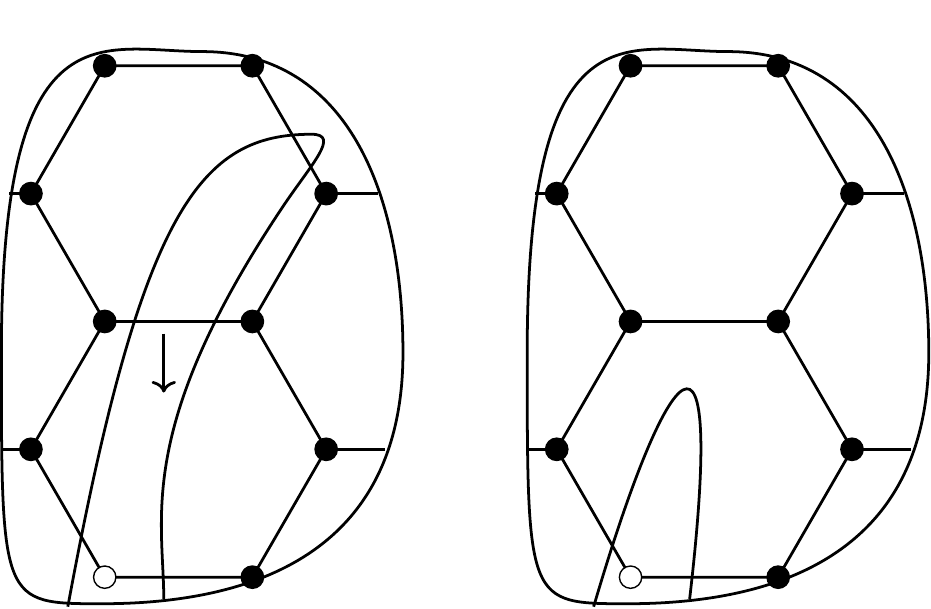}
\end{center}
\caption{Example of a of local homeomorphism to eliminate a double-crossing from an edge that does not give rise to an invisible error, as in the proof of Theorem \ref{thm:main3}.}
\label{fig:proof2}
\end{figure}

\subsubsection*{Proof of Theorem \ref{thm:main4}}
As in the proof of Theorem \ref{thm:main1}, we may assume that $\mathcal{L}$ is strictly-convex. Recall that $P^\varepsilon(sD)$ is the largest $\mathcal{L}$-compatible set that is contained in $sD$, and let $\bar{P}^\varepsilon(sD)$ be the smallest $\mathcal{L}$-compatible set that contains $sD$. Recall also that $N_\ell(sD)$ and $N_\ell^\varepsilon(sD)$ denote, respectively, the number of components of $sD \cap \mathcal{N}_\ell$ and~$P^\varepsilon(sD) \cap \mathcal{N}_\ell^\varepsilon$. 

Observe that discrepancies in $N_\ell(sD)$ and $N_\ell^\varepsilon(sD)$ come from three sources: (i) components of $sD \setminus \mathcal{N}_\ell$ that do not intersect any $v \in sD \cap \varepsilon \mathcal{V}$; (ii) vertices $v \in sD \cap \varepsilon \mathcal{V}$ that belong to the same component of $sD \setminus \mathcal{N}_\ell$ but different components of $sD \setminus \mathcal{N}_\ell^\varepsilon$; and (iii)  vertices $v \in sD \cap \varepsilon \mathcal{V}$ that belong to the same component of $sD \setminus \mathcal{N}_\ell^\varepsilon$ but different components of $sD \setminus \mathcal{N}_\ell$. 

The first of these is bound above by the number of small excursion domains in $\bar{P}^\varepsilon(sD)$, the total multiplicity of double-crossings on the boundary $\partial sD$, and the total multiplicity of tubular-crossings in $sD$; the second is bound by the total multiplicity of tubular-crossings in~$sD$; the third is bound by the total multiplicity of four-crossings in~$sD$. 

All in all, defining the random variables 
\[  A^\varepsilon(sD)  :=  \sum_{e \in \varepsilon \partial \bar{P}^\varepsilon(sD) \cap \mathcal{E}} \tilde{D}(e) , \quad   B^\varepsilon(sD) := \sum_{ f \in \varepsilon \bar{P}^\varepsilon(sD) \cap \mathcal{F}} \tilde{F}(f)  , \quad C^\varepsilon(sD) := \sum_{ e \in \varepsilon \bar{P}^\varepsilon(sD) \cap \mathcal{E}} \tilde{T}(f)  , \]
and
\[  D^\varepsilon(sD) := |\{  \text{small excursion domains in } \bar{P}^\varepsilon(sD) \}|  , \]
we see that
 \[ |N_\ell(sD) - N_\ell^\varepsilon(sD)| \le A^\varepsilon(sD) + B^\varepsilon(sD)  + 2C^\varepsilon(sD) +  D^\varepsilon(sD).\]

As in the proof of Theorem \ref{thm:main2}, by Propositions \ref{prop:condc}, \ref{prop:contriple} and \ref{prop:consed} and the union bound, there exists a $c = c(\kappa, \ell, \mathcal{L}, D) > 0$ such that, for $\varepsilon \in (0, 1)$ and $s > 0$,
\[ \max\{  \mathbb{E}[A^\varepsilon(sD)] , \mathbb{E}[B^\varepsilon(sD)], \mathbb{E} [ C^\varepsilon(sD)] , \mathbb{E}[D^\varepsilon(sD)]  <  c  \varepsilon^{2-\delta} s^{2}  .  \]

Putting this together, we have
 \begin{align*}
\left| \frac{ \mathbb{E}[N_\ell^\varepsilon(sD)]}{ \text{Area}(sD)} - \frac{ \mathbb{E}[N_\ell(sD)]}{ \text{Area}(sD)}  \right| &= \frac{ | \mathbb{E}[N_\ell(sD)] - \mathbb{E}[N_\ell^\varepsilon(sD)]|  }{s^2 \text{Area}(D)}  \le \frac{ \mathbb{E}[|N_\ell(sD) - N_\ell^\varepsilon(sD)|]  }{s^2 \text{Area}(D)} \\
& \le \frac{ \mathbb{E}[A^\varepsilon(sD)] + \mathbb{E}[B^\varepsilon(sD)] + 2\mathbb{E}[C^\varepsilon(sD)] + \mathbb{E}[D^\varepsilon(sD)]  }{s^2 \text{Area}(D)} \\
& < 5c \text{Area}(D)  \varepsilon^{2-\delta} . 
 \end{align*}
Since $\mathbb{E}[N_\ell(sD)] / \text{Area}(sD)$ converges to $c_{NS}$ by definition, we have the result.

Remark that, by this argument, we extract a bound of $c  \varepsilon^{2 - \delta} $ on the rate of convergence in the statement of Theorem \ref{thm:main4}, for some $c = c(\kappa, \ell, \mathcal{L}, D, \delta)$.

 %%%%%%%%%%%%%%%%

\smallskip

\appendix

\section{Appendix A: Proof of the Gaussian estimates}
\label{appendix1}

In this appendix we give the proof of Propositions \ref{prop:three} and \ref{prop:smalldevest}.

\subsection*{Proof of Proposition \ref{prop:three}}
Let $\rho$ denote the spectral measure of $\Psi$, defined by the relation
\[ \kappa(s) = \int_{\mathbb{R}^2} e^{2 \pi i  \langle s, \mu \rangle} \, d \rho(\mu) , \quad s \in \mathbb{R}^2 .\]
By Assumption \ref{assumpt:degen} (as well as the remarks that follow), $\rho$ is not supported on a line. Hence by linear rescaling we may assume, without loss of generality, that 
\[ 
P \subseteq \rm{supp}(\rho), \]
where $P= \{(1,0), (-1,0), (0,1), (0, -1)\} \subseteq \mathbb{R}^2$.

We next argue that it is sufficient to prove the result for the field $\hat \Psi$ whose spectral measure~$\hat \rho$ is the sum of unit point-masses at each point in $P$. To see why, assume the claimed result fails for a given field $\Psi$, i.e. for each $\delta > 0$ there exist distinct non-colinear points $(s_i)_{i = 1,2,3}$ inside a ball of radius $\delta$ such that the matrix $K := (\kappa(s_j-s_k))_{1 \le j,k \le 3}$ is singular. For each such set of points, the singularity of $K$ is equivalent to the existence of non-zero $(c_i)_{i = 1,2,3}$ such that
\[   \sum_{j = 1}^3 \sum_{k = 1}^3 c_j c_k \kappa(s_j - s_k) =   \int_{\mathbb{R}^2}  \bigg|  \sum_{j = 1}^3 c_j e^{2 \pi i \langle s_i, \mu \rangle} \bigg|^2 d \rho(\mu)  = 0,  \]
which in turn is equivalent to $\rm{supp}(\rho)$ being contained in the zero set of  $g_s(\mu) :=\sum_{j = 1}^3 c_j e^{2 \pi i \langle s_i, \mu \rangle}$.
Hence, since $\rm{supp}(\hat \rho) \subseteq \rm{supp}(\rho)$, the same is true for $\rm{supp}(\hat \rho)$, and so the claimed result must also fail for~$\hat \Psi$.

Finally we show that the result is true for $\hat \Psi$ by direct computation. By stationarity, it is enough to show the existence of a $\delta > 0$ such that the distribution of
\begin{align}
\label{e:nondegen}
\left( \hat \Psi(0) , \hat \Psi(s_1), \hat \Psi(s_2) \right)
\end{align}
is non-degenerate for all distinct non-zero $s_1 := (x_1, y_1)$ and $s_2 := (x_2, y_2)$ in $[0, 1]^2 \cap B(\delta)$ such that $s_1$ and~$s_2$ are not co-linear with the origin; we actually prove the stronger statement that~\eqref{e:nondegen} is non-degenerate for all distinct non-zero $s_1$ and $s_2$ inside the square $[0, \pi/2)^2$, as long as $s_1$ and~$s_2$ do not both lie on the line $y=x$. The field $\hat \Psi$ can be represented as 
\[ \hat \Psi (x, y)  = X_1 \cos(x) + X_2 \sin(x) + X_3 \cos(y) + X_4 \sin(y) ,\]
where $X_i$ are i.i.d.\ standard Gaussian random variables. Defining the vector 
\[ V(x,y)=(\cos(x),\sin(x),\cos(y),\sin(y)) \in \mathbb{R}^4, \]
 the distribution of \eqref{e:nondegen} is non-degenerate if and only if the vectors $V(0)$, $V(s_1)$, and $V(s_2)$ are linearly independent. Indeed, the $3\times 4$ matrix $W$ with these rows has full rank if and only if the covariance matrix of \eqref{e:nondegen}, $\Sigma = WW^T$, is non-singular. By Gaussian elimination, these vectors are linearly independent if and only if
\[    ( \cos(x_1) - \cos(y_1), \sin(x_1), \sin(y_1)  ) \quad \text{and} \quad  ( \cos(x_2) - \cos(y_2), \sin(x_2), \sin(y_2)  ) \]
are linearly independent. Making the substitutions
\[  \bar x = \sin(x)  \quad \text{and} \quad  \bar y= \sin(y) = \lambda \bar x,\]
the result follows once we show that, for $\lambda > 1$, the function 
\[ g(\bar x) := \sqrt{1 - \bar x^2} - \sqrt{1 -  \lambda^2 \bar x ^2} \] 
is strictly convex on the domain $0 \le \bar x < 1/\lambda^2$. This may be checked by direct computation, since
\[  g''(x) = \frac{\lambda^8 x^2}{(1 - \lambda^4 x^2)^{3/2}} -  \frac{ x^2}{(1 -x^2)^{3/2}}   + \frac{\lambda^4 }{(1 - \lambda^4 x^2)^{1/2}}  - \frac{1}{(1 - x^2)^{1/2}}  > 0 .\]

\subsection*{Proof of Proposition \ref{prop:smalldevest}}
By Lemma \ref{lem:oddeven}, the two components of $\nabla \Psi(0)$ and the random vector $(\lambda_1, \lambda_2)$ are independent. Moreover, the distribution of $\nabla \Psi(0)$ is non-degenerate and so there exists a $c > 0$ such that
\[  \mathbb{P} \left(|\nabla \Psi(0)| <  \delta_1 \right)  < c \delta_1^2 . \] 
Hence it is sufficient to show that there exists a $c$ such that
\[ \mathbb{P} \left( |\lambda_1| <  \delta_2 , |\lambda_2| <  \delta_2, |\lambda_1 + \lambda_2| < \delta_3 \right)  < c \delta_2^2 \delta_3  .\]
We use the fact that $\nabla^2 \Psi(0)$ is distributed as
\[ \frac{1}{2} k^2 M , \]
where $M$ is a $2 \times 2$ symmetric random matrix with all elements centred Gaussian random variables with covariance
\[  \mathbb{E}[M_{ij} M_{kl}] = \frac{1}{2}( \delta_{ik} \delta_{jl} + \delta_{il} \delta_{jk} + \delta_{ij} \delta_{kl} ),  \] 
with $\delta_{ij}$ the Kronecker delta function (see \cite[Eq. (5)]{F04}, for which it is useful to note that, in the notation of that paper (see, especially, eq.\ (3)), $J^2 = 2 k^4 J_0^{(4)}(0) / 3 = k^4/4$ in the case of the random plane wave, where $J_0^{(4)}$ is the fourth derivative of the zeroth Bessel function). The density of the (ordered) eigenvalues $\mu_1 \le \mu_2$ of the matrix $M$ is known \cite[Theorem 2.2]{CS15}, and satisfies 
\[  f(\mu_1, \mu_2) = c_1 \exp \big\{ - \frac{1}{2} \sum_{i = 1,2} \mu_i^2  + \frac{1}{8} (\sum_i \mu_i)^2\big\} |\mu_1 - \mu_2| \id_{\{ \mu_1 \le \mu_2 \}} ,\]
for a certain normalising constant $c_1  > 0$. Hence there exists a $c_2 > 0$ such that the density of $\lambda_1$ and $\lambda_2$ satisfies 
\[ f(\lambda_1, \lambda_1) < c_2 |\lambda_1 - \lambda_2|  .\]
Integrating over the region $\{ |\lambda_1| <  \delta_2 , |\lambda_2| <  \delta_2, |\lambda_1 + \lambda_2| < \delta_3\}$ yields the result.

\smallskip
\section{Appendix B: Kac-Rice computations}
\label{appendix2}

In this appendix we give details of the matrix computations that we used in the Kac-Rice arguments in Section \ref{sec:kacrice} above. These are used to control the effect of conditioning on certain Gaussian vectors involving the random field $\Psi$ and its derivatives $\Psi_v$.

\begin{proposition}
\label{prop:append1}
Fix $\delta > 0$ and let $f, g: \mathbb{R}^+ \to \mathbb{R}$ be functions such that, for each $x \in (0, 1)$,
\[ \left| f(x) - 1 +  x^2 \right| < \delta x^4 \quad \text{and} \quad \left| g(x) +  2x \right| < \delta x^3  . \]
For each $x > 0$, define the matrices 
\[ A^x :=  \left[ \begin{array}{cc}
1 & f(x) \\
f(x) & 1 \\
\end{array} \right]  \ , \quad  B^x :=  \left[ \begin{array}{cc}
0 & g(x) \\
- g(x)  & 0 \\
\end{array} \right] , \] 
a matrix $C = (c_{ij})_{1 \le i,j \le 1}$ satisfying $c_{11} = c_{22} = 2$, the vector
\[ \mu^x := (\mu^x_i)_{i = 1,2} = B^x (A^x)^{-1}  (1, 1)^T, \]
and the matrix 
\[  D^x = (d^x_{ij})_{1 \le i,j \le 2} := C - (B^x)^T (A^x)^{-1} B^x  .   \]
Then there exists a constant $c = c(\delta) > 0$ such that, for each $x \in (0, 1)$,
\[  \frac{ \max_i \max\{ (\mu^x_i)^4 ,(d^x_{ii})^2  \} }{|A^x| } <  c x^2. \]
\end{proposition}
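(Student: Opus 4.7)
\textbf{Proof proposal for Proposition \ref{prop:append1}.}

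The plan is a direct matrix computation, exploiting the explicit $2\times 2$ structure. First I would invert $A^x$: since $\det A^x = 1 - f(x)^2 = (1-f(x))(1+f(x))$, and by the hypothesis $1 - f(x) = x^2 + O(x^4)$ while $1 + f(x) = 2 - x^2 + O(x^4)$, we obtain the two-sided bound $|A^x| = 2x^2 + O(x^4)$. In particular, for $x \in (0, x_0)$ with $x_0 = x_0(\delta)$ sufficiently small, $|A^x| \in [x^2, 3x^2]$, while for $x \in [x_0, 1)$ the quantity $|A^x|$ stays bounded below by a positive constant depending on $\delta$ (provided the $f,g$ inducing the matrices remain admissible; otherwise the statement is vacuous), so the denominator causes no trouble.

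Next I would compute $\mu^x$ exactly. The symmetry of $A^x$ makes this clean:
\[
(A^x)^{-1}(1,1)^T = \frac{1}{1+f(x)}(1,1)^T,
\]
so $\mu^x = \frac{1}{1+f(x)}\bigl(g(x),-g(x)\bigr)^T$. Since $|g(x)| \le 2x + \delta x^3$ and $1+f(x)$ is bounded away from zero uniformly on $(0,1)$ (by shrinking if necessary and using $\delta$-dependent constants), this yields $|\mu_i^x| = O(x)$, hence $(\mu_i^x)^4 = O(x^4)$.

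The main work, and the step I expect to be the genuine obstacle, is bounding $d_{ii}^x$. Carrying out the matrix product $(B^x)^T (A^x)^{-1} B^x$ using the antisymmetric form of $B^x$ gives the clean expression
\[
d_{ii}^x \;=\; 2 - \frac{g(x)^2}{|A^x|} \;=\; \frac{2|A^x| - g(x)^2}{|A^x|}.
\]
The leading asymptotics $2|A^x|\approx 4x^2$ and $g(x)^2 \approx 4x^2$ cancel exactly, so one must extract the subleading term. Writing $f(x) = 1 - x^2 + r(x)$ with $|r(x)|\le \delta x^4$ and $g(x) = -2x + s(x)$ with $|s(x)|\le \delta x^3$, expanding both $2|A^x| = 4x^2 - 2x^4 - 4r(x) + O(x^6)$ and $g(x)^2 = 4x^2 - 4xs(x) + O(x^6)$, and subtracting yields $2|A^x| - g(x)^2 = O(x^4)$ with a constant depending only on $\delta$. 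Dividing by $|A^x| = \Theta(x^2)$ then gives $d_{ii}^x = O(x^2)$ and so $(d_{ii}^x)^2 = O(x^4)$.

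Combining these bounds, $\max_i\max\{(\mu_i^x)^4, (d_{ii}^x)^2\} = O(x^4)$ while $|A^x| = \Theta(x^2)$, so the ratio is $O(x^2)$, which is the desired inequality. The only subtlety is tracking constants through the Taylor remainder bookkeeping in the cancellation step; once that is done the rest is mechanical.
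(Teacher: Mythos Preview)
Your proposal is correct and follows essentially the same approach as the paper: an explicit $2\times 2$ computation giving $|A^x|=1-f(x)^2$, $\mu^x_i=g(x)/(1+f(x))$, and $d^x_{ii}=(2(1-f(x)^2)-g(x)^2)/(1-f(x)^2)$, followed by Taylor expansion to exhibit the $O(x^4)$ cancellation in the numerator of $d^x_{ii}$. The paper's proof is in fact slightly terser, simply recording the asymptotics $|A^x|=2x^2+O(x^4)$, $|\mu^x_i|=O(x)$, $d^x_{ii}=O(x^2)$ and declaring the result; your remainder bookkeeping with $r(x),s(x)$ makes the same cancellation explicit, and your remark about $x$ bounded away from $0$ is a harmless extra care that the paper leaves implicit.
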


\begin{proof}
By explicit calculation, 
\[ |A^x| = 1 - f(x)^2 , \quad \mu^x_i = \frac{ g(x)(1-f(x))}{1 - f(x)^2} ,  \]
and
\[ d^x_{ii}  = \frac{2(1 - f(x)^2) - g(x)^2}{1-f(x)^2}. \]
Putting the bounds on $f$ and $g$ into the above formulae yields that, as $x \to 0$,
\[ |A^x| = 2  x^2 + O(x^4),  \quad |\mu_i^x| =  \frac{O(  x^3 )}{2 x^2 + O(x^4)} = O(x) \]
and
\[ d^x_{11} = d^x_{22} = \frac{O(x^4)}{2 x^2 + O(x^4)} = O(x^2) .\]
Combining these gives the result.
\end{proof}

\begin{proposition}
\label{prop:append2}
Fix a constant $\delta > 0$, an angle $\mu \in (0, \pi)$, and a set of unit vectors $v = (v_1, v_2, v_3) \in (S^1)^3$. Let $s = (s_1, s_2, s_3) \in (\mathbb{R}^2)^3$ be a (non-degenerate) triangle. For each $1 \le i \neq j \le 3$, denote by $\eta_{ij}$ the angle between the line segment $s_i s_j$ and the vector $v_j$, let $e^{s}_{ij} \in \mathbb{R}$ be such that
\[   |e^s_{ij} - e^s_{ik}| < \delta |\eta_{ij} - \eta_{ik}| , \]
and let $f^s_{ij}, g^{s}_{ij}: \mathbb{R}^+ \to \mathbb{R}$ be such that, for each $x \in (0, 1)$,
\[ \left| f^s_{ij}(x) - 1 + x^2 + e^s_{ij} x^4\right| < \delta x^6 \quad \text{and} \quad  \left| g^{s}_{ij}(x) + \cos(\eta_{ij}) \left(2 x + 4 e^s_{ij} x^3 \right) \right| < \delta x^5. \]
Suppose that $f^s_{ii} = 1$, $g^s_{ii} = 0$ and $f^{s}_{ij} = f^{s}_{ji}$. Define the matrices
\[ A^{s} :=  \left( f^s_{ij}(|s_i - s_j|) \right)_{1 \le i,j \le 3} , \] 
\[ B^{s} :=  \left( g^s_{ij}(|s_i - s_j|)  \right)_{1 \le i,j \le 3} , \]
a matrix $C = (c_{ij})_{1 \le i,j \le 3}$ satisfying $c_{11} = c_{22} = c_{33} = 2$, the vector
\[ \mu^s := (\mu^s_i)_{1 \le i \le 3} = B^s (A^s)^{-1}  (1, 1, 1)^T, \]
and the matrix 
\[  D^{s} = (d^{s}_{ij})_{1 \le i,j \le 2} := C - (B^{s})^T (A^s)^{-1} B^{s}  .   \]
Abbreviate $\varepsilon := \max_{i,j}(|s_i - s_j|)$. Then, there exists a $c = c(\delta, \mu, v) > 0$ such that, uniformly over all $s$ such that $\varepsilon < 1$, $\theta^-(s) \ge \varepsilon^{3/2}$ and $\theta^+(s) \le \mu$,
\[  \frac{  \max_i    \max\{ (\mu^s_i)^6, (d^{s}_{ii})^3 \} }{|A^{s}|} < c \left( \frac{\varepsilon}{\theta^-(s)}\right)^2  .\]
\end{proposition}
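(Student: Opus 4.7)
The plan is to expand $A^s$ and $B^s$ via the given Taylor-type conditions, use a Cayley-Menger identity to pin down the leading order of $|A^s|$ as $16\,\mathrm{Area}(s)^2$, verify that perturbative corrections are subdominant under $\theta^-(s) \geq \varepsilon^{3/2}$, and then extract the bounds on $\mu^s_i$ and $d^s_{ii}$ from explicit $3 \times 3$ matrix computations.

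For the lower bound on $|A^s|$, I would decompose $A^s = A^{(0)} + \Delta$, where $A^{(0)}_{ij} = 1 - r_{ij}^2$ (with $A^{(0)}_{ii}=1$) captures the quadratic Taylor part and $\Delta$ has zero diagonal and off-diagonal entries of size $O(\varepsilon^4)$. Subtracting the first row of $A^{(0)}$ from the other two and expanding gives the Cayley-Menger-type identity
\[ |A^{(0)}| = 2r_{12}^2 r_{13}^2 + 2r_{12}^2 r_{23}^2 + 2r_{13}^2 r_{23}^2 - r_{12}^4 - r_{13}^4 - r_{23}^4 = 16\,\mathrm{Area}(s)^2, \]
and the law of sines together with $\theta^+(s) \leq \mu$ yields $\mathrm{Area}(s) \geq c_\mu \varepsilon^2 \theta^-(s)$, so $|A^{(0)}| \geq c \varepsilon^4 \theta^-(s)^2$. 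To pass to $|A^s|$, I would compute the cofactors of $A^{(0)}$ directly: the diagonal cofactor $C_{ii}$ is $\lesssim r_{jk}^2$ while the off-diagonal cofactor, by the law of cosines, satisfies $C_{ij} = -2 r_{ik}r_{jk}\cos\theta_k + O(\varepsilon^4)$, hence $|C_{ij}| \lesssim r_{ik}r_{jk}$. Since $\Delta_{ii}=0$ and $|\Delta_{ij}| = O(\varepsilon^4)$, the first-order perturbation $\mathrm{tr}(\mathrm{Adj}(A^{(0)})\,\Delta^T) \lesssim \sum_{i \neq j} r_{ik}r_{jk}\varepsilon^4 \lesssim \varepsilon^6\theta^-(s)$, which is smaller than $|A^{(0)}|$ by a factor of $\varepsilon^2/\theta^-(s) \leq \varepsilon^{1/2}$ under our hypothesis; the higher-order perturbations are controlled analogously, giving $|A^s| \geq c\,\varepsilon^4\theta^-(s)^2$.

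For the numerator I would bound $\mu^s_i$ and $d^s_{ii}$ separately. Every entry of $A^s$ differs from $1$ by $O(\varepsilon^2)$, so the row sums of $A^s$ are all $3 + O(\varepsilon^2)$, and a direct $3\times 3$ computation (or Cramer's rule) shows that $w := (A^s)^{-1}\mathbf{1}$ has uniformly bounded entries. Since $|B^s_{ij}| \leq c\,r_{ij} \leq c\varepsilon$ by the hypothesis on $g^s_{ij}$, this gives $|\mu^s_i| = |\sum_j B^s_{ij} w_j| \leq c\varepsilon$, hence $(\mu^s_i)^6 \leq c\varepsilon^6$. For $d^s_{ii} = c_{ii} - [B^s (A^s)^{-1}(B^s)^T]_{ii} = 2 - [B^s(A^s)^{-1}(B^s)^T]_{ii}$, I would exploit the consistency between the Taylor expansions of $f^s_{ij}$ and $g^s_{ij}$ (the latter is a formal derivative of the former, with the quartic terms related by the Lipschitz bound on $e^s_{ij}$) to execute a matrix calculation in the spirit of Proposition~\ref{prop:append1}, showing that the leading $O(1)$ contributions to $[B^s (A^s)^{-1}(B^s)^T]_{ii}$ cancel $c_{ii}=2$ exactly and leave a residue of $O(\varepsilon^2)$, so $(d^s_{ii})^3 \leq c\varepsilon^6$.

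The main obstacle will be the cancellation controlling $d^s_{ii}$: a naive entrywise bound on $\sum_{jk} B^s_{ij}(A^s)^{-1}_{jk}B^s_{ik}$ only shows that it is $O(1)$, which is useless against $c_{ii}=2$, so one must track how the leading $O(1)$ pieces cancel against $2$ and isolate the surviving $O(\varepsilon^2)$ residue. Organizing the various $O(\varepsilon^4)$, $O(\varepsilon^6)$ and higher error contributions carefully enough to see this cancellation cleanly is the principal technical challenge. Once these three pieces are in hand, combining $|A^s| \geq c\varepsilon^4\theta^-(s)^2$ with $\max\{(\mu^s_i)^6, (d^s_{ii})^3\} \leq c\varepsilon^6$ yields the claimed bound $c\,(\varepsilon/\theta^-(s))^2$.
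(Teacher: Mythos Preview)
Your plan for the determinant $|A^s|$ via the Cayley--Menger identity is essentially the paper's approach and is fine (with the caveat that you need $|\Delta_{ij}|\lesssim r_{ij}^4$ rather than the cruder $O(\varepsilon^4)$ to push the perturbation below $|A^{(0)}|$ when $\theta^-(s)$ is small).

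The real problem is the treatment of $\mu^s_i$. Your claim that $w=(A^s)^{-1}\mathbf{1}$ has uniformly bounded entries is false in the near-degenerate regime. Writing $\varepsilon a_i$ for the side opposite $\alpha_i$ and taking $\alpha_1=\theta^-(s)$, Cramer's rule gives, to leading order,
\[
w_2 \;\approx\; \frac{a_1\cos\alpha_2}{2a_3\sin^2\alpha_1},
\]
which is of order $1/\alpha_1$ since $a_1\sim\alpha_1$ by the sine rule. Your entrywise bound then only yields $|\mu^s_1|\lesssim \varepsilon/\theta^-(s)$, and plugging this into the target ratio gives $(\mu^s_1)^6/|A^s|\lesssim \varepsilon^2/\theta^-(s)^8$, which is hopelessly large under $\theta^-(s)\ge \varepsilon^{3/2}$. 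The required bound $|\mu^s_i|=O(\varepsilon)$ does hold, but it comes from cancellations between the two summands $B_{1j}w_j$ that your argument does not detect; the paper accesses these by working with $\mu_i|A|$ and $d_{ii}|A|$ directly (i.e.\ the Cramer numerators) rather than bounding each factor separately.

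The sketch for $d^s_{ii}$ has the same weakness, only more so. The cancellation of the $O(1)$ part against $c_{ii}=2$ is not a generic ``derivative-consistency'' phenomenon: in the paper it hinges on the trigonometric identity
\[
\sin^2\theta + 2\cos\alpha\cos\beta\cos\theta - \cos^2\alpha - \cos^2\beta = 0 \quad \text{for } \theta\in\{\alpha\pm\beta,\ 2\pi-\alpha-\beta\},
\]
applied with $\theta=\alpha_1$ and $\alpha,\beta=\eta_{21},\eta_{31}$, which kills the would-be $\varepsilon^4$ coefficient of $d_{11}|A|$. In the degenerate case the paper must go further still, splitting off the regime $\alpha_1\to 0$, invoking the relations $a_1\lesssim\alpha_1$, $|a_2-a_3|\lesssim\alpha_1$, $|e^s_{12}-e^s_{13}|\lesssim\alpha_1$, and Taylor-expanding $\mu_i|A|$ and $d_{ii}|A|$ to orders $\varepsilon^7$--$\varepsilon^9$ to extract the extra $\alpha_1^2$ factor. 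None of this structure is visible in a ``bound each piece separately'' approach, so as written your proposal does not close.
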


\begin{proof}
To ease the notation, we henceforth drop all dependencies on $s$. We also abbreviate $f_1 := f_{23} = f_{32}$ and similarly for $f_2$ and $f_3$. 

For $i = 1,2,3$, let $\alpha_i$ denote the angle in triangle $s$ at vertex $s_i$, and let the opposing side-length be $\varepsilon a_i$, for some $a_i \in (0, 1]$. Without loss of generality we may assume that $a_3 = 1$, and notice that $\max\{a_1, a_2 \} \ge 1/2$ by the triangle inequality. We may also assume, without loss of generality, that $\alpha_1 \le \alpha_2$ and $\alpha_ 1 \le \alpha_3$.

By explicit calculation,
\[ |A| = 1-f_1(\varepsilon a_1)^2 - f_2(\varepsilon a_2)^2 - f_3(\varepsilon a_3)^2 + 2f_1(\varepsilon a_1)f_2(\varepsilon a_2)f_3(\varepsilon a_3) , \]
\begin{align*}
 \mu_1 =  \frac{1}{|A| } &  \bigg( g_{12}(\varepsilon a_3) ( f_2(\varepsilon a_2) - 1)(f_1(\varepsilon a_1) + f_3(\varepsilon a_3) - f_2 (\varepsilon a_2) - 1)  \\
 & \qquad \qquad + g_{13}(\varepsilon a_2) ( f_3(\varepsilon a_3) - 1)( f_1(\varepsilon a_1) + f_2(\varepsilon a_2) - f_3(\varepsilon a_3) - 1) \bigg)  , 
 \end{align*}
and
\begin{align*}
d_{11}  = \frac{1}{|A| } &  \bigg(   2|A|  - g_{21}(\varepsilon a_3)^2(1-f_2(\varepsilon a_2)^2)  - g_{31}(\varepsilon a_2)^2(1-f_3(\varepsilon a_3)^2)  \\
& \qquad  \qquad \qquad   - 2 g_{21}(\varepsilon a_2) g_{31}(\varepsilon a_3) \left( f_2(\varepsilon a_2)f_3(\varepsilon a_3) - f_1(\varepsilon a_1) \right)    \bigg) ,
\end{align*}
with $\mu_{22}, \mu_{33}, d_{22}$ and $d_{33}$ similar.

First suppose that the angle $\alpha_1$ is uniformly bounded away from zero (i.e.\ uniformly in $s$). Then, by applying the bounds on $f_{ij}$ and $g_{ij}$, as $\varepsilon \to 0$,
\[ |A| = 4 a_2^2 \sin(\alpha_1) \varepsilon^4 + o(\varepsilon^4), \quad   \mu_{i} |A| = O(\varepsilon^5)  , \]
and
\begin{align*}  
 d_{11} |A|   = 8 a_2^2 \left( -\sin^2(\alpha_1) + 2\cos(\eta_{21}) \cos(\eta_{31}) \cos(\alpha_1) - \cos^2(\eta_{21}) - \cos^2(\eta_{31})  \right) \varepsilon^4 + O(\varepsilon^6) ,
\end{align*}
where we make use of the cosine rule
\[  a_1^2 = a_2^2 + 1 - 2a_2 \cos(\alpha_1) . \]
Notice that, depending on the orientation of $s_2 s_3$ and $v_1$,
\[ \alpha_1 \in \{\eta_{21} - \eta_{31}, \eta_{31} - \eta_{21}, \eta_{21} + \eta_{31}, 2\pi - \eta_{21} - \eta_{31} \} .\]
Hence because of the identity 
\[  \sin^2(\theta) + 2 \cos(\alpha) \cos(\beta) \cos(\theta) - \cos^2(\alpha) - \cos^2(\beta) = 0 ,  \]
valid for each pair of angles $\alpha$ and $\beta$, and any third angle
\[\theta \in \{ \alpha - \beta, \beta - \alpha, \alpha + \beta, 2\pi - \alpha - \beta \} , \]
the term of order $O(\varepsilon^4)$ vanishes, which leaves us with $d_{11} |A| = O(\varepsilon^6)$, with $d_{22}$ and $d_{33}$ similar. Putting this together, we have 
\begin{align*}  \frac{  \max_i    \max\{ \mu_i^6, d_{ii}^3 \} }{|A|}  &= \max \left\{   \frac{  \max_i  (\mu_i |A|)^6 }{|A|^7}  +    \frac{  \max_i  (d_{ii} |A|)^3 }{|A|^4}  \right\}   \\
& = \frac{O(\varepsilon^{30})}{ (4 a_2^2 \sin(\alpha_1))^7 \varepsilon^{28} + o(\varepsilon^{28}) } +  \frac{O(\varepsilon^{18})}{ (4 a_2^2 \sin(\alpha_1))^4 \varepsilon^{16} + o(\varepsilon^{16}) }    = O(\varepsilon^2)  ,
\end{align*}
which yields the result in the case that the angle $\alpha_1$ is uniformly bounded away from zero.

So it remains to deal with the case in which angle $\alpha_1$ is not uniformly bounded away from zero, but may instead be as small as $\varepsilon^{3/2}$ (this constraint is crucial in restricting the order to which we need to control errors in the Taylor expansions below). Suppose then that $\alpha_1 < \mu/2$. Then, since $\theta^+(s) \le \mu$, and as the angles in the triangle sum to $\pi$, it must be the case that $\alpha_i \ge (\pi -\mu)(1-\delta) \ge (\pi-\mu)/2$ for each $i = 2,3$. Using the sine rule and the bound $\sin(x) < x$ for $x > 0$, it follows that there exist a $c_1 = c_1(\mu)$ such that
\begin{equation}
 \label{eq:rel1}
 a_1  < c_1 \alpha_1 .
 \end{equation}
By the triangle inequality, it must also be the case that there exists a $c_2 = c_2(\mu)$ such that
\begin{equation}
 \label{eq:rel2}
 |a_2 - a_3| < c_2 \alpha_1 .
 \end{equation}
Finally,  by the continuity of the sequences $e_{ij}$, there also exists a $c_3 = c_3(\delta)$ such that
\begin{equation}
 \label{eq:rel3}
 |e_{12} - e_{13}| < c_3 \alpha_1  \quad \text{and} \quad    |e_{21} - e_{31}| < c_3 \alpha_1  .
 \end{equation}

We now recompute the lower bound on $|A|$ and the upper bounds on $\mu_i |A|$ and $d_{ii} |A|$ using the relations \eqref{eq:rel1}--\eqref{eq:rel3}. It may be verified by Taylor expanding up to order $\varepsilon^7$ that there exists a $c_5 = c_5(\delta, \mu, v)$ such that 
\[ |A| > c_5 \alpha_1^2 \varepsilon^4   . \]
Moreover, applying the cosine rule 
\[  a_1^2 = a_2^2 + 1 - 2a_2 \cos(\alpha_1)  \]
and relations \eqref{eq:rel1} and \eqref{eq:rel3}, it can be verified by Taylor expanding up to order $\varepsilon^9$ that there exists a $c_6 = c_6(\delta, \mu, v)$ such that 
\[   \mu_1|A| <  c_6 \alpha_1^2 \varepsilon^5  \quad \text{and}  \quad  d_{11} |A| <  c_6 \alpha_1^2 \varepsilon^6 . \]
Similarly, applying the cosine rules
\[  a_2^2 = a_1^2 + 1 - 2a_1 \cos(\alpha_2)  \quad \text{and} \quad 1 = a_1^2 + a_2^2 - 2 a_1 a_2 \cos(\alpha_3)   \]
and relations \eqref{eq:rel2} and \eqref{eq:rel3}, there exists a $c_7 = c_7(\delta, \mu, v)$ such that 
\[   \max_{i = 2,3} \mu_{i} |A| <  c_7 \alpha_1^2 \varepsilon^5  \quad \text{and}  \quad  \max_{i = 2,3} \{ d_{ii} |A|  \} <  c_7 \alpha_1^2 \varepsilon^6 . \]
Combining these gives the result.
\end{proof}

\smallskip
\section{Appendix C: Proof of the Russo-Seymour Welsh estimates}
\label{appendix3}

In this appendix we give the proof of Theorem \ref{thm:rsw}; for this, it is helpful to have read both \cite{BG16} and \cite{Tas16}. 

We follow the proof in \cite{BG16}, but make three essentially distinct improvements to the arguments. The main improvement is to replace the discretisation scheme in \cite[Theorem]{BG16} with our enhanced scheme in Theorem~\ref{thm:main2}; this is already enough to reduce the required decay exponent from $\alpha \approx 325$ to $\alpha \approx 55$. The other two improvements are relatively minor. Before explaining them, we state a preliminary proposition on the quasi-independence of signs of Gaussian vectors, which is a strengthened form of \cite[Theorem 4.3]{BG16}. This bound is completely independent of other arguments, and may be of independent interest. 

\subsection{Quasi-independence of the signs of Gaussian vectors}

\begin{proposition}[Quasi-independence of the signs of Gaussian vectors; see {\cite[Theorem 4.3]{BG16}}]
\label{p:qi}
There exists a constant $c > 0$ such that, for any centred, normalised Gaussian vector $(X, Y)$ of dimension $m + n$ with covariance
 \[   \Sigma = \left[  
\begin{array}{cc}
\Sigma_X & \Sigma_{XY} \\
\Sigma_{XY}^T & \Sigma_Y \\
\end{array} \right]  ,\]
and any events $A$ and $B$ depending only on the signs of $X$ and $Y$ respectively, it holds that
\begin{align}
\label{e:bound}
 | \mathbb{P}(A \cap B) - \mathbb{P}(A) \mathbb{P}(B) | &< c \left( (m + n) \| \Sigma_{XY} \|_{2}  \log (1/ \| \Sigma_{XY} \|_{2}  ) \right)^{1/3} 
 \end{align}
where $\| \cdot \|_{2}$ denotes the Schatten $2$-norm (i.e.\ $\ell_2$-norm of the vector of matrix entries). 
\end{proposition}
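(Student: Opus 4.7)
The plan is to combine a smoothing of the sign indicators with a Gaussian interpolation between the independent and joint distributions. Since $A, B$ depend only on signs, write $\mathbf{1}_A(X) = g_A(\mathrm{sgn}(X_1), \ldots, \mathrm{sgn}(X_m))$ for some Boolean function $g_A: \{-1,1\}^m \to \{0,1\}$, and similarly $\mathbf{1}_B(Y) = g_B(\mathrm{sgn}(Y_1), \ldots, \mathrm{sgn}(Y_n))$. For a parameter $\varepsilon \in (0,1)$ to be optimised later, let $\phi_\varepsilon : \mathbb{R} \to [0,1]$ be a smooth sigmoid equal to $0$ on $(-\infty, -\varepsilon]$ and to $1$ on $[\varepsilon, \infty)$, with $\|\phi_\varepsilon'\|_\infty \le C/\varepsilon$. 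Define the smoothed indicators $F(X) := \tilde g_A(\phi_\varepsilon(X_1), \ldots, \phi_\varepsilon(X_m))$ and $G(Y) := \tilde g_B(\phi_\varepsilon(Y_1), \ldots, \phi_\varepsilon(Y_n))$, where $\tilde g_A, \tilde g_B$ denote the multilinear extensions of $g_A, g_B$ to $[0,1]^m, [0,1]^n$ respectively. Gaussian anti-concentration, namely $\mathbb{P}(|X_i| < \varepsilon) \le c\varepsilon$ for each standard normal marginal, then implies that each of $|\mathbb{P}(A \cap B) - \mathbb{E}[F(X)G(Y)]|$ and $|\mathbb{P}(A)\mathbb{P}(B) - \mathbb{E}[F(X)]\mathbb{E}[G(Y)]|$ is at most $c(m+n)\varepsilon$.

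Next I would interpolate: let $(X^t, Y^t)$ be the centred Gaussian with the same marginals as $(X, Y)$ but cross-covariance $t \Sigma_{XY}$ for $t \in [0,1]$. Standard Gaussian integration by parts yields the Slepian-type formula
\[ \frac{d}{dt} \mathbb{E}[F(X^t) G(Y^t)] = \sum_{i,j} (\Sigma_{XY})_{ij} \, \mathbb{E}\bigl[\partial_i F(X^t)\, \partial_j G(Y^t)\bigr] . \]
Cauchy-Schwarz on this matrix inner product bounds the right-hand side by $\|\Sigma_{XY}\|_2 \sqrt{T_F T_G}$, where $T_F := \sum_i \mathbb{E}[(\partial_i F(X^t))^2]$ and $T_G$ is analogous. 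Since $\tilde g_A$ is multilinear with values in $[0,1]$ we have $|\partial_i \tilde g_A| \le 1$; combined with $|\phi_\varepsilon'(X_i)| \le C/\varepsilon$ supported on $\{|X_i| < \varepsilon\}$ this yields $\mathbb{E}[(\partial_i F)^2] \le c/\varepsilon$, so that $T_F \le cm/\varepsilon$ and $T_G \le cn/\varepsilon$. Integrating from $0$ to $1$ and combining with the anti-concentration error gives
\[ |\mathbb{P}(A \cap B) - \mathbb{P}(A)\mathbb{P}(B)| \le c\bigl[(m+n)\varepsilon + \sqrt{mn}\,\|\Sigma_{XY}\|_2/\varepsilon\bigr] . \]

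The main obstacle is then to optimise over $\varepsilon$ so as to recover the precise form stated in the proposition, with its cube-root dependence on $(m+n)\|\Sigma_{XY}\|_2$ and logarithmic correction. Naive balancing already yields a bound of order $(m+n)\|\Sigma_{XY}\|_2^{1/2}$, which is competitive in some regimes but does not produce the stated exponents. To sharpen this, I would expect to apply the smoothing at a coordinate-dependent scale, restricting the full smoothing to the coordinates for which the row or column sums of $\Sigma_{XY}$ are largest, and combine this with a dyadic decomposition across scales of $\varepsilon$ that produces the logarithmic factor. Carefully balancing these refinements against the corresponding anti-concentration contributions is the technical crux of the argument.
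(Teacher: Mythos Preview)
Your interpolation-and-smoothing argument is correct as far as it goes, but it does not prove the stated inequality, and the paper's route is entirely different.  After your optimisation the bound you actually obtain is of order $(m+n)^{1/2}(mn)^{1/4}\|\Sigma_{XY}\|_2^{1/2}$, which is incomparable to the claimed $\bigl((m+n)\|\Sigma_{XY}\|_2\log(1/\|\Sigma_{XY}\|_2)\bigr)^{1/3}$: yours is weaker when $\|\Sigma_{XY}\|_2 \gg (m+n)^{-4}$ and stronger otherwise.  The gap is the final paragraph, where you appeal to coordinate-dependent smoothing and a dyadic decomposition to recover the cube-root dependence.  This is not carried out, and it is hard to see how it could succeed: the $(m+n)\varepsilon$ anti-concentration term arises from a union bound over all $m+n$ marginals and cannot be reduced below a linear factor in $m+n$ simply by varying $\varepsilon$ across coordinates, while the interpolation term already uses Cauchy--Schwarz optimally against $\|\Sigma_{XY}\|_2$.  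Nothing in the Slepian approach naturally produces a $1/3$ power of $m+n$.

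The paper instead truncates the spectral expansions of $X$ and $Y$ at a threshold $\lambda$, obtaining approximations $\tilde X,\tilde Y$ that live in low-eigenvalue complements.  A union bound plus an exponential Chebyshev argument controls the probability that the signs of $X$ and $\tilde X$ differ.  On the truncated vectors one then bounds the total variation distance between $(\tilde X,\tilde Y)$ and an independent copy via Pinsker's inequality and an explicit block-determinant computation, which gives a contribution of order $\lambda^{-1}\|\Sigma_{XY}\|_2$.  Optimising over $\lambda$ and the Chebyshev parameter produces the stated $1/3$ exponent and the logarithmic correction.  The conceptual difference is that the paper decouples via a \emph{dimension reduction in the spectrum}, whereas you decouple via \emph{smoothing in the range}; the former is what delivers the sublinear dependence on $m+n$.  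Your bound is genuinely useful and, amusingly, would actually improve the application in Appendix~C (it gives exponent $b=3$ rather than $b=4$ in the notation there), but it does not establish Proposition~\ref{p:qi} as stated.
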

\begin{remark}
If $\eta$ denotes the maximum absolute entry of $\Sigma_{XY}$, \eqref{e:bound} is at most
\[  c  (m + n)^{4/3} \eta^{1/3}  \log (1/\eta )^{1/3}   .  \]
Note that \cite[Theorem 4.3]{BG16} gave the weaker bound of  $c (m+n)^{8/5} \eta^{1/8}$ on this quantity.
\end{remark}

\begin{proof}
We follow the outline of the proof of \cite[Theorem 4.3]{BG16} but improve the estimates in each of the two main steps. Consider the spectral expansion of $X$ and $Y$
\[  
X = \sum_{i = 1}^m a_i \lambda_{1, i}^{1/2} u_{1,i} \quad \text{and} \quad Y =  \sum_{i = 1}^n b_i \lambda_{2, i}^{1/2} u_{2, i} ,  
\]
where $(a_i)_{i = 1}^m$ and $(b_i)_{i = 1}^n$ are vectors consisting of independent standard normal random variables, and $(\lambda_{1,i})_{i = 1}^m$ and $(\lambda_{2,i})_{i = 1}^n$ are the eigenvalues, in descending order, of $\Sigma_X$ and $\Sigma_Y$ respectively, with $(u_{1,i})_{i = 1}^m$ and $(u_{2, i})_{i = 1}^n$ the corresponding orthonormal eigenvectors. Fix a threshold $\lambda > 0$ to be set later, and truncate $X$ and $Y$ as
\[   X = \tilde{X} + \hat{X} = \sum_{i = 1}^{m_0} a_i \lambda_{1, i}^{1/2} u_{1,i} + \sum_{i = m_0 +1}^{m} a_i \lambda_{1, i}^{1/2} u_{1,i}     \]
and
\[   Y = \tilde{Y} + \hat{Y} = \sum_{i = 1}^{n_0} b_i \lambda_{2, i}^{1/2} u_{2,i} + \sum_{i = n_0 +1}^{m} b_i \lambda_{2, i}^{1/2} u_{2,i}     \]
where $m_0$ and $n_0$ are chosen so that $\lambda_{1,m_0} \ge \lambda > \lambda_{1, m_0 + 1}$ and $\lambda_{2, n_0} \ge \lambda > \lambda_{2, n_0 + 1}$.

The first step is to control the probability that $X$ and $\tilde{X}$ (respectively $Y$ and $\tilde{Y}$) have the same signs. Set $\varepsilon_1 :=  \sqrt{c_0 \lambda m}$ and $\varepsilon_2 := \sqrt{c_0 \lambda n}$ for a constant $c_0 > 1$ to be determined later. Notice that by the union bound
\begin{equation}
\label{e:x}
\mathbb{P} \bigg( \bigcap_{i = 1}^m |X_i| >  \varepsilon_1 \bigg) \ge   1 - m \varepsilon_1 = 1 - c_0^{1/2} \lambda^{1/2} m^{3/2} . 
\end{equation}
On the other hand, applying Parseval's theorem 
\begin{align*}
\mathbb{P} \bigg( \bigcup_{i = 1}^ m |\hat{X}_i| > \varepsilon_1 \bigg)    & \le   \mathbb{P} \left( \sum_{i = 1}^m \hat{X}_i^2 > \varepsilon_1^2 \right) = \mathbb{P} \left( \sum_{i = m_0 +1}^{m} a_i^2 \lambda_{1, i}   > \varepsilon_1^2 \right)   \le  \mathbb{P} \left( \sum_{i = m_0 + 1}^{m} a_i^2   > \frac{\varepsilon_1^2}{\lambda} \right)
\end{align*}
where in the last step we used the fact that $\lambda_{1, i} < \lambda$ for $i > m_0$. Applying the Markov inequality, for any $a \in (0, 1/2)$ the last probability is at most
\begin{align*}
 e^{-\frac{a \varepsilon_1^2}{\lambda} } \mathbb{E} \left[ e^{ a \sum_{i = m_0 + 1}^{m} a_i^2 } \right] = e^{-\frac{a \varepsilon_1^2}{\lambda} } (\sqrt{1 - 2a})^{-(m - m_0)}  \le  e^{-\frac{a \varepsilon_1^2}{\lambda} } (\sqrt{1 - 2a})^{-m}   .
\end{align*}
Setting 
\[  a := \frac{1}{2} \left( 1 - \frac{\lambda m }{\varepsilon_1^2}  \right)= \frac{1}{2} \left(1 - \frac{1}{c_0} \right) \in (0, 1/2) \]
and combining with \eqref{e:x} yields an upper bound of
\[  c_0^{1/2} \lambda^{1/2} m^{3/2} +  e^{-\frac{m}{2} \left( c_0 - \log(c_0) - 1 \right) } \]
on the probability that $X$ and $\tilde{X}$ do not have the same signs. Similarly, replacing $\varepsilon_1$ with $\varepsilon_2$,
\[  c_0^{1/2} \lambda^{1/2} n^{3/2} +  e^{-\frac{n}{2} \left( c_0 - \log(c_0) - 1 \right) } \]
is an upper bound on the probability that $Y$ and $\tilde{Y}$ do not have the same signs.

The second step is to control the total variation distance between $(\tilde{X}, \tilde{Y})$ and the vector $(\bar{X}, \bar{Y})$ formed from independent copies of $\tilde{X}$ and $\tilde{Y}$; on the event that $X$ and $\tilde{X}$ have the same signs, this gives an upper bound on the quantity in \eqref{e:bound}. By linear transformation, this total variation distance is the same as the total variation distance between the vector $Z = (a_1, \ldots, a_{m_0}, b_1, \ldots, b_{n_0})$ and a vector of $m_0 + n_0$ independent standard normal random variables. By Pinsker's inequality, this distance is bound above by
\[   \frac{1}{2} \sqrt{   \log (1/  | \Sigma_Z| )} \]
where $\Sigma_Z$ is the covariance matrix of $Z$. Observe that $\Sigma_Z$ has the block form
\[     \left[  
\begin{array}{cc}
\id_{m_0 \times m_0} &  \Lambda_1^{-1/2} U_1^T \Sigma_{XY} U_2^T \Lambda_2^{1/2} \\
  \Lambda_2^{-1/2} U_2 \Sigma_{XY}^T U_1 \Lambda_1^{1/2}  & \id_{n_0 \times n_0} \\
\end{array} \right]   \]
where $U_1 := (u_{1, i})_{i = 1}^{m_0}$, $U_2 := (u_{2, i})_{i = 1}^{n_0}$, and $\Lambda_1$ and $\Lambda_2$ are diagonal matrices formed from  the vectors $(\lambda_{1, i})_{i = 1}^{m_0}$ and $(\lambda_{2, i})_{i = 1}^{n_0}$ respectively. Applying the block determinant formula,  $|\Sigma_Z| =  |  \id_{m_0 \times m_0}    - E |$,
where $E$ is the symmetric positive-definite matrix
\[ E :=   \Lambda_1^{-1/2} \left( U_1^T \Sigma_{XY} U_2^T \right)  \Lambda_2^{-1}  \left( U_2 \Sigma_{XY}^T U_1  \right)\Lambda_1^{1/2} , \]
Applying the bound in Lemma~\ref{l:tracebound} below gives that
\[  0 \le \log( 1 / | \Sigma_Z | ) \le    2 \text{Tr}(E)   , \]
whenever $\text{Tr}(E) < 1/2$. Using the invariances of Schatten $p$-norms $\| \cdot \|_{p}$ under isometry and the H\"{o}lder inequality for these norms, 
\begin{equation*}
\text{Tr}(E) =  \| E \|_1 \le \|  \Lambda_1  \|_{\infty}^{-1/2}   \|   \Sigma_{XY}   \|_{2}  \|   \Lambda_2  \|_{\infty}^{-1}   \|   \Sigma_{XY}  \|_{2}  \|   \Lambda_1 \|_{\infty}^{1/2}    \le \lambda^{-2}  \|  \Sigma_{XY}   \|_{2}^2 .
\end{equation*}
To conclude, as long as $\lambda >  \sqrt{2} \|\Sigma_{XY} \|_{2} $, the total variation distance between  $(\tilde{X}, \tilde{Y})$ and $(\bar{X}, \bar{Y})$ is at most $ 2^{-1/2} \lambda^{-1}  \|\Sigma_{XY} \|_{2} $.

Putting both steps together, we have established a bound on \eqref{e:bound} of the form
\[ c_0^{1/2} \lambda^{1/2} m^{3/2} +  c_0^{1/2} \lambda^{1/2} n^{3/2} +   e^{-\frac{m}{2} \left( c_0 - \log(c_0) - 1 \right) } +  e^{-\frac{n}{2} \left( c_0 - \log(c_0) - 1 \right) }  +  \frac { \| \Sigma_{XY} \|_2 }{\sqrt{2} \lambda}  \]
for any choice of $c_0 > 1$ and $\lambda > \sqrt{2} \|\Sigma_{XY} \|_{2} $. Setting
\[  c_0 := 2 + \log (1/ \| \Sigma_{XY} \|_2 )   \quad \text{and} \quad   \lambda :=  \sqrt{2}  (m+n)^{-1} \| \Sigma_{XY} \|_2^{1/3}     \]
yields the result for sufficiently large $c > 0$ (in particular, this setting of $\lambda$ is valid whenever the statement in \eqref{e:bound} has any content).
\end{proof}

\begin{lemma}
\label{l:tracebound}
Let $E$ be a $n \times n$ symmetric positive-definite matrix such that $\text{Tr}(E) < 1/2$. Then
\[    -  \log | \id_{n \times n} - E|   \le 2 \text{Tr}(E) .\]
\end{lemma}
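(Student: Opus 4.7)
The plan is to diagonalise $E$ and reduce the matrix inequality to a one-dimensional inequality for its eigenvalues. Since $E$ is symmetric and positive-definite, its eigenvalues $\lambda_1, \ldots, \lambda_n$ are strictly positive, and since they sum to $\text{Tr}(E) < 1/2$, each satisfies $\lambda_i \in (0, 1/2)$. Diagonalising $\id_{n \times n} - E$ simultaneously with $E$ gives $|\id_{n \times n} - E| = \prod_i (1 - \lambda_i)$, and hence
\[
-\log |\id_{n \times n} - E| = \sum_{i=1}^n -\log(1 - \lambda_i).
\]

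It therefore suffices to prove the scalar bound $-\log(1-x) \le 2x$ for $x \in (0, 1/2)$. This follows from the Taylor series
\[
-\log(1-x) = \sum_{k \ge 1} \frac{x^k}{k} \le \sum_{k \ge 1} x^k = \frac{x}{1-x} \le 2x,
\]
using $1 - x > 1/2$ in the last step. Applying this with $x = \lambda_i$ for each $i$ and summing yields $-\log|\id_{n \times n} - E| \le 2 \sum_i \lambda_i = 2 \text{Tr}(E)$, as required. There is no substantive obstacle here; the only thing to verify is that all $\lambda_i$ lie in the range where the scalar inequality applies, which is immediate from positive-definiteness together with the trace hypothesis.
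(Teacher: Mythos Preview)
Your proof is correct and follows essentially the same approach as the paper: diagonalise $E$, reduce to the scalar inequality $-\log(1-x) \le 2x$ on $(0,1/2)$, and sum over eigenvalues. The only difference is that you justify the scalar bound via the Taylor/geometric series, whereas the paper simply asserts $\log(1-x) \ge -2x$ for $x \in [0,1/2]$.
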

\begin{proof}
Let the eigenvalues of $E$ be $\lambda_i(E) \in [0, \text{Tr}(E)] < 1/2$. Since the spectral radius of $E$ is less than one, we may write
\[   - \log | \id_{n \times n} - E|  = \sum_i - \log(1 - \lambda_i(E) ) . \]
 The result then follows since $\log(1-x) \ge -2x$ if $x \in[0, 1/2]$.
 \end{proof}

\subsection{Proof of Theorem \ref{thm:rsw}}

Before we begin, we remark that the overall decay exponent of $\alpha = 16$ arises essentially from three distinct sources: it can be understood as $\alpha = d(1 + a)b$, where $d$ is the dimension of $\mathbb{R}^2$, $\varepsilon = s^{-a}$ is the required mesh-size in the discretisation scheme (in our case $a = 1$ by Theorem~\ref{thm:main2}), and $\eta = m^{-b}$ is the maximum size of the cross-correlation terms between two Gaussian vectors of size $m$ in order to guarantee quasi-independence of the law of the signs (in our case $b = 4$ by Proposition \ref{p:qi}). As such, any improvement in the exponents in either Theorem~\ref{thm:main2} or Proposition \ref{p:qi} would lower this decay exponent.

Assume that $\kappa \ge 0$ and that $\kappa$ is invariant under reflection through the horizontal axis and under rotation by $\pi/2$. Fix a periodic lattice $\mathcal{L}$ that is a triangulation, is invariant under reflection through the horizontal axis and under rotation by $\pi/2$, and is integral in the sense that $\mathcal{V} \subseteq (\frac{1}{N} \mathbb{Z})^2$ for some $N \in \mathbb{N}$. Fix a sequence $\varepsilon = \varepsilon(s)$, a smooth bounded domain $D$ and disjoint boundary arcs $\gamma$ and $\gamma'$ on $\partial D$. 

Consider the discretised excursion domain $sD \cap \mathcal{N}^\varepsilon$, and observe that there is a natural way to partition this set into a positive regime $\mathcal{O}^+ = \mathcal{O}^+(s, \varepsilon, D)$ and a negative region $\mathcal{O}^- = \mathcal{O}^-(s, \varepsilon, D)$ depending on the sign of $\Psi$ on the vertices lying inside each component of~$\mathcal{N}^\varepsilon$. The main result in \cite[Theorem 4.9]{BG16} is that, if 
\begin{equation}
\label{eq:rsw2}
\left(  s \varepsilon^{- (1 + \log(3/2)/\log(4/3) ) } \right)^{2 \times 8} s^{-\alpha} =  o(1)
 \end{equation}
 as $s \to \infty$, then there exists a $c = c(\kappa, D, \gamma, \gamma') > 0$ such that, for $s > 0$ sufficiently large,
 \begin{equation*}
 \mathbb{P} \left( \text{one component of each of } \mathcal{O}^+ \text{ and }  \mathcal{O}^- \text{ intersects both } s\gamma \text{ and } s \gamma' \right) > c .
 \end{equation*}
 To interpret the constituent parts of \eqref{eq:rsw2}, the term
 \begin{equation}
 \label{e:const}
  \tilde{ \varepsilon} = \varepsilon^{1 + \log(3/2)/\log(4/3)}
  \end{equation}
 is an adjustment to the lattice size to ensure that Tassion's argument in \cite{Tas16} applies, the term $m = \left(  s \tilde{\varepsilon}^{-1} \right)^2$ is the order of the number of vertices of $\varepsilon \mathcal{L}$ inside $sD$, the term $\eta = s^{-\alpha}$ gives the order of correlations on the scale $s$, and finally the relationship
 \begin{equation}
 \label{e:qi3}
  m^{8} \eta^{-1} = o(1)
  \end{equation}
is a sufficient condition, by \cite[Theorem 4.3]{BG16}, to ensure the asymptotic quasi-independent of the signs of $\varepsilon \mathcal{L} \cap sD_1$ and $\varepsilon \mathcal{L} \cap sD_2$ for disjoint domains $D_1$ and $D_2$. 
 
Applying the discretisation scheme in \cite[Theorem 1.5]{BG16}, valid if $\varepsilon = o(s^{-8-\delta})$ and under certain additional conditions on $\kappa$, the conclusion in \cite{BG16} is that
 \begin{equation}
 \label{eq:rsw3}
 \mathbb{P} \left( \text{there exists a component of } sD \cap \mathcal{N} \text{ that intersects } s\gamma \text{ and } s \gamma' \right) > c 
 \end{equation}
 as long as $\kappa(x) = o(|x|^{-\alpha - \delta})$ for
\[   \alpha > (1 + 8 \times (1 + \log(3/2)/\log(4/3) ) ) \times 2 \times 8 \approx 325 . \]

Our first improvement to the decay exponent is achieved by a direct substitution of the discretisation scheme in \cite[Theorem 1.5]{BG16} with the scheme in Theorem \ref{thm:main2}, which is valid if $\varepsilon = o(s^{-1-\delta})$ and under Assumption \ref{assumpt:degen}. Then it is immediate that \eqref{eq:rsw3} holds as long as $\kappa(x) = o(|x|^{-\alpha - \delta})$ for
\begin{equation*}
\alpha > (1 + 1 \times (1 + \log(3/2)/\log(4/3) ) ) \times 2 \times 8   \approx 55 . 
\end{equation*}

To further improve the decay exponent, we make two additional enhancements to the argument. First, we remark that the exponent 
\[ c := 1 + \log(3/2)/\log(4/3) \]
in \eqref{e:const} originates from the choice of the constant $\rho = 2/3$ in \cite[Lemma 2.2]{Tas16}, and in particular can be written as
\[ c(\rho) := 1 + \log(1/\rho)/\log(2 \rho) . \]
We then observe that \cite[Lemma 2.2]{Tas16} still holds, with an identical proof, if we replace the constant $2/3$ with any $\rho \in (0, 1)$, and so the exponent $c$ can be replaced with $1 + \delta_1$ for any $\delta_1 > 0$ by taking $\rho$ close enough to one.

Second, we substitute the strengthened quasi-independence result in Proposition \ref{p:qi} in place of \cite[Theorem 4.3]{BG16}. The consequence is that the exponent in \eqref{e:qi3} may be reduced from $8$ to $4 + \delta_2$, for any $\delta_2 > 0$.

Combining these improvements, we conclude that \eqref{eq:rsw3} holds as long as  $\kappa(x) = o(|x|^{-\alpha - \delta})$ for
\[ \alpha > (2 + \delta_1) \times 2 \times (4 + \delta_2)  ,\]
which can be made arbitrarily close to $16$ by choosing $\delta_1$ and $\delta_2$ small enough.

%%%%%%%%%%%%%%%%%%%%%%%%%%%%%%%%%%%%%%%%%%%%%%%%%%%%%%%%%%%%
%%%%%%%%%%%%%%%%%%%%%%%%%%%%%%%%%%%%%%%%%%%%%%%%%%%%%%%%%%%%

\bigskip

%\nocite{*}

\bibliography{BM_bibliography}{}
\bibliographystyle{plain}

\end{document}